\DeclareFontFamily{U}{mathx}{\hyphenchar\font45}
\DeclareFontShape{U}{mathx}{m}{n}{
      <5> <6> <7> <8> <9> <10>
      <10.95> <12> <14.4> <17.28> <20.74> <24.88>
      mathx10
      }{}
\DeclareSymbolFont{mathx}{U}{mathx}{m}{n}
\DeclareMathAccent{\widecheck}{0}{mathx}{"71}
\DeclareMathAccent{\wideparen}{0}{mathx}{"75}
\def\cold#1{\textcolor{blue}{#1}}
\date{\today}
\newcommand{\A}{{\mathcal A}}
\newcommand{\Hi}{{\mathcal H}}
\newcommand{\Z}{{\mathbb Z}}
\newcommand{\R}{{\mathbb R}}
\newcommand{\C}{{\mathbb C}}
\newcommand{\T}{{\mathbb T}}
\newcommand{\Q}{{\mathbb Q}}
\newcommand{\D}{{\mathbb D}}
\newcommand{\U}{{\mathbb U}}
\newcommand{\PP}{{\mathbb P}}
\newcommand{\E}{{\mathcal E}}
\newcommand{\e}{{\varepsilon}}
\newcommand{\DS}{{\mathcal{DS}}}
\newcommand{\sgn}{{\mathrm{sgn}}}
\newcommand{\hot}[1]{\textcolor{red}{#1}}
\newcommand{\SO}{{\mathbb{SO}}}
\newcommand{\supp}{{\mathrm{supp}}}
\newtheorem{theorem}{Theorem}[section]
\newtheorem{lemma}[theorem]{Lemma}
\newtheorem{prop}[theorem]{Proposition}
\newtheorem{coro}[theorem]{Corollary}
\theoremstyle{definition}
\newtheorem{remark}[theorem]{Remark}
\theoremstyle{definition}
\newtheorem*{defi}{Definition}
\theoremstyle{definition}
\numberwithin{equation}{section}
\renewcommand{\Im}{\mathrm{Im} \, }
\renewcommand{\Re}{\mathrm{Re} \, }
\newcommand{\tr}{\mathrm{tr} }
\newcommand{\ac}{{\mathrm{ac}}}
\newcommand{\pp}{{\mathrm{pp}}}
\newcommand{\eqdef}{\overset{\mathrm{def}}=}
\begin{document}


\title[The Unitary AMO]{Spectral Characteristics of the Unitary Critical Almost-Mathieu Operator}

\author[J.\ Fillman]{Jake Fillman}

\address{Department of Mathematics, Virginia Tech, 225 Stanger Street, Blacksburg, VA~24061, USA}

\email{fillman@vt.edu}

\thanks{J.\ F.\ was supported in part by NSF grants DMS--1067988 and DMS--1361625.}

\author[D.\ C.\ Ong]{Darren C. Ong}

\address{Department of Mathematics, University of Oklahoma, Norman, OK~73019-3103, USA}

\email{darrenong@math.ou.edu}

\author[Z.\ Zhang]{Zhenghe Zhang}

\address{Department of Mathematics, Rice University, Houston, TX~77005, USA}

\email{zzhang@rice.edu}

\thanks{Z.\ Z.\ was supported in part by AMS-Simons travel grant 2014-2016.}

\begin{abstract}

We discuss spectral characteristics of a one-dimensional quantum walk whose coins are distributed quasi-periodically. The unitary update rule of this quantum walk shares many spectral characteristics with the critical Almost-Mathieu Operator; however, it possesses a feature not present in the Almost-Mathieu Operator, namely singularity of the associated cocycles (this feature is, however, present in the so-called Extended Harper's Model). We show that this operator has empty absolutely continuous spectrum and that the Lyapunov exponent vanishes on the spectrum; hence, this model exhibits Cantor spectrum of zero Lebesgue measure for all irrational frequencies and arbitrary phase, which in physics is known as Hofstadter's butterfly. In fact, we will show something stronger, namely, that all spectral parameters in the spectrum are of critical type, in the language of Avila's global theory of analytic quasiperiodic cocycles. We further prove that it has empty point spectrum for each irrational frequency and away from a frequency-dependent set of phases having Lebesgue measure zero. The key ingredients in our proofs are an adaptation of Avila's Global Theory to the present setting, self-duality via the Fourier transform, and a Johnson-type theorem for singular dynamically defined CMV matrices which characterizes their spectra as the set of spectral parameters at which the associated cocycles fail to admit a dominated splitting.

\end{abstract}

\maketitle

\setcounter{tocdepth}{1}
\tableofcontents

\section{Introduction}

In recent years, quantum walks have been studied extensively; see \cite{AVWW, BGVW, CGMV, CGMV2, CRWAGW, CedWer2015, DFO, DFV, DMY2, J11, J12, JM, K14, KS11, KS14, ST12} for some papers on this subject that have appeared in recent years. In the case of one-dimensional coined quantum walks, a substantial amount of progress has been made by using the beautiful observation of Cantero, Gr\"unbaum, Moral, and Vel\'azquez which relates the unitary time-one maps of such quantum walks with CMV matrices, a class of unitary operators which arise from a separate construction in the theory of orthogonal polynomials on the unit circle (OPUC) \cite{CGMV}. In particular, this connection enables one to translate questions about the dynamics of a quantum walk into questions about the spectral theory of CMV matrices and OPUC, a subject for which an enormous number of powerful tools exist; see, e.g., \cite{S1,S2}.

In the present paper, we will study a particular one-dimensional coined quantum walk with quasi-periodically distributed coins. In general, a quasi-periodic walk is given by choosing a frequency vector $\beta \in \T^d \eqdef \R^d/\Z^d$ and a continuous map $\tau:\T^d \to \U(2)$, the group of $2 \times 2$ unitary matrices; given such a setup, one defines a family of quantum walks indexed by $\T^d$ by defining quantum coins on $\Z$ via
\[
C_n
=
C_{n,\theta}
=
\tau(n\beta+\theta),
\quad
n \in \Z, \; \theta \in \T^d.
\]
In this paper, we will focus on the scenario in which $d = 1$, and the map is the canonical imbedding of the circle into $\U(2)$ which identifies $\T$ with the rotation group $\SO(2)$. The unitary operators that arise from this construction appear in the physics literature, and they are closely related to the self-adjoint model known as the Almost-Mathieu Operator (AMO) \cite{ShiKat2010}, \cite{ShiKat2011}. In fact, the operators under consideration in this paper satisfy a very strong self-duality under the Fourier transform, and so they are most closely related to the \emph{critical} AMO. We will show that these operators share many spectral characteristics with the critical AMO, such as purely singular continuous spectrum supported on a Cantor set of zero Lebesgue measure. In contrast to the walks discussed in this paper and \cite{ShiKat2010,ShiKat2011}, Cedzich \textit{et al} have considered homogeneous quantum walks in quasi-periodic \emph{electric fields} \cite{CRWAGW} and quantum walks whose coins vary quasi-periodically in \emph{time} \cite{CedWer2015}.

\subsection{Quantum Walks on the Integer Lattice} \label{sec:qw}

We now precisely describe quantum walks on $\Z$. A \emph{quantum walk} is described by a unitary operator on the Hilbert space $\mathcal{H} = \ell^2(\Z) \otimes \C^2$, which models a state space in which a wave packet comes equipped with a ``spin'' at each integer site. Here, the elementary tensors of the form $\delta_n \otimes e_\uparrow$, and $\delta_n \otimes e_\downarrow$ with $n \in \Z$ comprise an orthonormal basis of $\Hi$ (where $\{e_\uparrow,e_\downarrow\}$ denotes the canonical basis of $\C^2$). A time-homogeneous quantum walk scenario is given as soon as coins
\begin{equation}\label{e.timehomocoins}
C_{n}
=
\begin{pmatrix}
c^{11}_{n} & c^{12}_{n} \\
c^{21}_{n} & c^{22}_{n}
\end{pmatrix}
\in \U(2), \quad n \in \Z,
\end{equation}
are specified. As one passes from time $t$ to time $t+1$, the update rule of the quantum walk applies the coins coordinatewise and shifts spin-up states to the right and spin-down states the the left, viz.
\begin{align}
\delta_{n} \otimes e_\uparrow & \mapsto
  c^{11}_{n} \delta_{n+1} \otimes e_\uparrow
+ c^{21}_{n} \delta_{n-1} \otimes e_\downarrow , \label{e.updaterule1} \\
\delta_n \otimes e_\downarrow  & \mapsto
  c^{12}_{n} \delta_{n+1} \otimes e_\uparrow
+ c^{22}_{n} \delta_{n-1} \otimes e_\downarrow \label{e.updaterule2}.
\end{align}
If we extend this by linearity and continuity to general elements of $\mathcal{H}$, this defines a unitary operator $U$ on $\mathcal{H}$. Equivalently, denote a typical element $\psi \in \Hi$ by
\begin{equation} \label{eq:typicalelt}
\psi
=
\sum_{n \in \Z} \left( \psi_{\uparrow,n} \delta_n \otimes e_\uparrow + \psi_{\downarrow,n} \delta_n \otimes e_\downarrow \right),
\end{equation}
where one must have
\[
\sum_{n \in \Z}\left( \left| \psi_{\uparrow,n} \right|^2 + \left| \psi_{\downarrow,n} \right|^2 \right)
<
\infty.
\]
We may then describe the action of $U$ in coordinates via
\begin{equation} \label{eq:ucoord:def}
\begin{split}
[U\psi]_{\uparrow,n}
& =
c_{n-1}^{11} \psi_{\uparrow,n-1} + c_{n-1}^{12} \psi_{\downarrow,n-1}, \\
[U\psi]_{\downarrow,n}
& =
c_{n+1}^{21} \psi_{\uparrow,n+1} + c_{n+1}^{22} \psi_{\downarrow,n+1}.
\end{split}
\end{equation}
Next, order the basis of $\mathcal{H}$ by setting $\varphi_{2m} = \delta_m \otimes e_\uparrow$, $\varphi_{2m+1} = \delta_m \otimes e_\downarrow$ for $m \in \Z$
 In this ordered basis, the matrix representation of $U : \mathcal{H} \to \mathcal{H}$ is given by
\begin{equation}\label{e.umatrixrep}
U
=
\begin{pmatrix}
\ddots &\ddots & \ddots & \ddots &&&&  \\
& 0 & 0 & c_1^{21} & c_1^{22} &&& \\
&c_{0}^{11} & c_{0}^{12} & 0 & 0 &&&  \\
&& & 0 & 0 & c_2^{21} & c_2^{22} &  \\
&& & c_1^{11} & c_1^{12} & 0 & 0 &  \\
&& && \ddots & \ddots &  \ddots & \ddots
\end{pmatrix},
\end{equation}
where e.g. $U_{2,1}=c_0^{12}$, and $U_{1,3} = c_1^{22}$. This can be checked readily using the update rule \eqref{e.updaterule1}--\eqref{e.updaterule2}; compare \cite[Section~4]{CGMV}.


In this paper, we will consider the case of coined quantum walks on the integer lattice where the coins are distributed quasi-periodically according to the rule
\begin{equation} \label{eq:coindef}
C_n
=
C_{n,\beta,\theta}
=
R_{2\pi(n\beta + \theta)},
\quad
n \in \Z,
\end{equation}
where $R_\gamma$ denotes counterclockwise rotation through the angle $\gamma$, i.e.,
\[
R_\gamma
=
\begin{pmatrix}
\cos\gamma & -\sin\gamma \\
\sin\gamma &  \cos\gamma
\end{pmatrix},
\quad
\gamma \in \R.
\]
We will denote the update rule associated to coins of the form \eqref{eq:coindef} by $U_{\beta,\theta}$. We will concentrate on the aperiodic case, in which $\beta \notin \Q$. In this case, translation by $\beta$ is a minimal transformation of $\T$, so there exists a uniform compact set $\Sigma_\beta \subseteq \partial \D$ such that $\sigma(U_{\beta,\theta}) = \Sigma_\beta$ for all $\theta \in \T$. Our main theorem characterizes the spectra and spectral type of $U_{\beta,\theta}$.

\begin{theorem} \label{t:uamospec}

The unitary AMO exhibits purely singular Cantor spectrum at all irrational frequencies and all phases, and exhibits purely singular continuous spectrum at all irrational frequencies and almost all phases. More precisely, for every irrational $\beta \in \T$, the following statements hold true:

\begin{enumerate}[label={\rm \alph*}]
\item \label{t:singspec} \hspace{-.18cm}.\ For all $\theta \in \T$, the spectral type of $U_{\beta,\theta}$ is purely singular, i.e.\ $\sigma_{\ac}(U_{\beta,\theta}) = \emptyset$.

\item \label{t:contspec} \hspace{-.18cm}.\ For Lebesgue almost every $\theta \in \T$, the spectral type of $U_{\beta,\theta}$ is purely singular continuous.

\item \label{t:zmspec} \hspace{-.18cm}.\ $\Sigma_\beta$ is a Cantor set of zero Lebesgue measure.

\end{enumerate}

\end{theorem}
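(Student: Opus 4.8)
The plan is to follow the template laid out by Avila's global theory of one-frequency analytic quasiperiodic cocycles, adapted to the CMV/quantum-walk setting, together with the self-duality of the model under the Fourier transform. First I would make the connection to cocycle dynamics explicit: the eigenvalue equation $U_{\beta,\theta}\psi = z\psi$ can be rewritten as a transfer-matrix recursion, producing a family of $\SL(2,\C)$-valued (after normalization) cocycles $(T_\beta, A_z)$ over the rotation by $\beta$, which depend analytically on $z \in \partial\D$. Here the essential new feature flagged in the abstract appears: because the coins are pure rotations, the off-diagonal ``transmission'' coefficients can vanish, so the transfer matrices are genuinely \emph{singular} at certain points, and the cocycle is only meromorphic rather than analytic; one must therefore work with the projectivized action and track the discriminant/determinant carefully, or pass to the equivalent Szeg\H{o}-type cocycle where the singularity is confined to an explicit scalar factor. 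I would record the complexified Lyapunov exponent $L(\varepsilon)$ as a function of the imaginary translation parameter, observe it is convex, piecewise affine with integer slopes, and compute its slope asymptotics from the explicit form of $A_z$.

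The heart of the argument is the computation $L(0) = 0$ on the spectrum $\Sigma_\beta$, i.e.\ that the Lyapunov exponent vanishes identically on the spectrum. For the critical AMO this is Bourgain--Jitomirskaya / Avila's result that self-duality pins the acceleration to be $\tfrac12$ at $\varepsilon = 0$ so that $L(0)$ equals the average of a nonnegative convex function that must vanish. Here I would exploit the self-duality: conjugating $U_{\beta,\theta}$ by the (fiberwise) Fourier transform sends it to a unitarily equivalent operator of the same form, which forces the acceleration of the complexified cocycle at $\varepsilon = 0$ to take the critical value, and since $L(\varepsilon) \ge 0$ is convex with $L$ even (or with the appropriate symmetry) and has a corner at $0$, continuity of $L$ in $z$ across $\Sigma_\beta$ combined with the fact that $L > 0$ forces uniform hyperbolicity (hence $z \notin \Sigma_\beta$ since $\Sigma_\beta$ is the set where no dominated splitting exists, by the Johnson-type theorem cited in the abstract) yields $L \equiv 0$ on $\Sigma_\beta$. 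In fact this is precisely the assertion that every $z \in \Sigma_\beta$ is of critical type, which the paper promises to prove; once that is in hand, part (c) follows quickly.

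With $L \equiv 0$ on $\Sigma_\beta$ established, zero Lebesgue measure of $\Sigma_\beta$ follows from the Thouless-type formula (Johnson--Kotani for CMV matrices): the integrated density of states $k(z)$ is related to $L(z)$ by a Hilbert-transform / potential-theoretic identity, and $L \equiv 0$ on the spectrum forces the logarithmic potential of the density of states measure to be harmonic off $\partial\D$ in a way that, combined with the absolute continuity of the density of states with respect to arc length failing, pins $|\Sigma_\beta| = 0$; more robustly, one can invoke Kotani theory directly — for ergodic CMV matrices $L \equiv 0$ on a positive-measure subset of $\partial\D$ would force that set to carry absolutely continuous spectrum, but part (a) has already shown $\sigma_{\ac} = \emptyset$, so $\Sigma_\beta$ (which is contained in the closure of $\{L = 0\}$) must have zero Lebesgue measure. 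Cantorness is then a standard consequence of the fact that $\Sigma_\beta$ has empty interior (it has measure zero) together with the general structure of spectra of ergodic operators — one can alternatively adapt Avila--Jitomirskaya's argument that gaps are dense, using the analyticity of the rotation number in $z$ on each gap and the quantization of its values at endpoints.

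The main obstacle I anticipate is handling the singularity of the cocycle cleanly throughout the global-theory machinery: Avila's acceleration argument, the convexity and quantization of the Lyapunov exponent, and the continuity statements all rely in their original form on the cocycle being analytic (or at least having no zeros of the determinant), and the places where a coin reduces to a pure rotation with vanishing transmission coefficient are exactly where transfer matrices degenerate. One must therefore either (i) work on a carefully chosen cover or with a modified cocycle (e.g.\ the Szeg\H{o} cocycle for the CMV matrix, where the offending scalar is explicit and can be divided out, shifting its contribution into a computable correction term for $L$), or (ii) re-derive the global-theory statements allowing meromorphic cocycles, as is done for the Extended Harper's Model. I would adopt the first route, isolating the singular factor and verifying that it contributes nothing to the acceleration at $\varepsilon = 0$, so that the self-duality argument goes through verbatim; the bookkeeping to make this rigorous — especially matching conventions between the CMV picture \eqref{e.umatrixrep} and the transfer-matrix picture — is where the real work lies.
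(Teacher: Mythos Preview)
Your outline addresses only part~(c) and leaves parts~(a) and~(b) entirely unproved. Part~(a) in the paper is \emph{not} a consequence of the cocycle analysis: it is a short ``high-barrier'' argument. Since $\beta$ is irrational one can find sites $n_j$ along which $|\cos(2\pi(n_j\beta+\theta))|$ decays summably; replacing those coins by pure reflections produces a trace-class perturbation that decouples $U_{\beta,\theta}$ into a direct sum of finite blocks, so $\sigma_{\ac}=\emptyset$ by trace-class scattering. You invoke ``part~(a) has already shown $\sigma_{\ac}=\emptyset$'' without ever indicating this step. Part~(b), absence of point spectrum for a.e.\ $\theta$, is the longest single argument in the paper and is completely absent from your plan: the paper uses Aubry duality not to compute $L$ but to convert a putative $\ell^2$ eigenfunction into an $L^2$ conjugacy $M^z(x)=B(x+\beta)D_\theta B(x)^{-1}$ to a constant diagonal cocycle, then analyzes $\Phi_k=\tr M_k-2\cos(2\pi k\theta)$ along a carefully chosen subsequence $k_s=4q_{n_l}$, using a subharmonic estimate (Proposition~\ref{p:subharmonic:sum}) to control the singular factor $\prod\cos$ and extracting a contradiction from the extremal Fourier coefficients of $\tr N_k$. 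None of this machinery is hinted at in your proposal.

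Even within part~(c), your mechanism is not the paper's and contains an error. The paper does \emph{not} use self-duality to compute the Lyapunov exponent or acceleration. Instead it (i) computes directly that $L(\beta,z;\e)=2\pi|\e|$ for $|\e|$ large by an explicit asymptotic analysis of $N^z_\e$, (ii) observes $L(\beta,z;\e)$ is even in $\e$ from the symmetry $N^z(\theta-i\e)=-P\,\overline{N^z(\theta+i\e)}\,P$, (iii) proves the acceleration lies in $\Z$ (not $\tfrac12\Z$) because away from the real axis $M^z_\e$ is $\mathrm{SL}(2,\C)$-valued, and (iv) invokes the Johnson-type theorem to see that $z\in\Sigma_\beta$ forces acceleration $1$ at $\e=0$, whence $L(\beta,z;0)=0$. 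Your claim that ``self-duality pins the acceleration to be $\tfrac12$'' is incorrect for this model (and for the critical AMO, where the acceleration on the spectrum is $1$); moreover the paper explicitly rules out half-integer acceleration in Lemma~\ref{l:accelnothalf}. The ``self-duality $\Rightarrow$ criticality'' heuristic you sketch is not made rigorous here and would require a separate argument relating the dual cocycle to a complex shift of the original one, which the paper does not attempt.
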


Theorem~\ref{t:uamospec}.\ref{t:zmspec} is a consequence of Theorem ~\ref{t:uamospec}.\ref{t:singspec}, Kotani Theory, and our Theorem~\ref{t.criticalenergy} below which implies that Lyapunov exponent of the associated one-parameter family of dynamical systems (see \eqref{eq:tmdef} for the cocycle map) vanishes on the spectrum. In fact, we will prove something stronger than vanishing Lypapunov exponents on the spectrum in Theorem~\ref{t.criticalenergy}. In the language of the global theory of one-frequency analytic quasiperiodic cocycles, the cocycle dynamics on the spectrum are all of critical type. Let us describe this more carefully.

In \cite{AGlobalActa}, Avila has developed a powerful global theory for $1$-frequency quasiperiodic Schr\"odinger operators with real analytic potentials. Roughly speaking, for a real analytic function $v\in C^\omega(\T,\R)$, one may define a family of operators $H_{\beta,v,\theta}:\ell^2(\Z)\rightarrow\ell^2(\Z)$ (known as \emph{Schr\"odinger operators}) by
\begin{equation}\label{schrodingeroperators}
[Hu]_n
=
u_{n-1} + u_{n+1} + v(\theta + n\beta)u_n,
\quad
u \in \ell^2(\Z), \;
n \in \Z.
\end{equation}
Just like in the present paper, one may associate a one-parameter family of dynamical systems to the operators in \eqref{schrodingeroperators}. Specifically, one may define $ A^{(E-v)}\in C^\omega(\T,\mathrm{SL(2,\R)})$ by
 \begin{equation}\label{schrodinger:cocycle}
A(\theta)=A^{(E-v)}(\theta)
=
\begin{pmatrix}E-v(\theta) & -1\\ 1 & 0\end{pmatrix},
\quad
E \in \R, \; \theta \in \T.
 \end{equation}
Given a map as in \eqref{schrodinger:cocycle}, one may define (positive) iterates via
\[
A_n(\theta)
=
A(\theta + (n-1)\beta)
\cdots
A(\theta + \beta)
A(\theta),\ n>0.
\]
Then \cite{AGlobalActa} classified each energy $E$ in the spectrum into three categories:

\begin{enumerate}
\item \textit{supercritical}, if $\sup_{\theta\in\T}\|A_n(\theta)\|$ grows exponentially (positivity of Lyapunov exponent).
\item \textit{subcritical}, if there is a uniform subexponential bound on the growth of $\|A_n(\zeta)\|$ through some band $|\Im \zeta|<\delta$.
\item \textit{critical}, otherwise.
\end{enumerate}

According to Bourgain-Goldstein \cite{BG}, supercritical energies correspond to pure point spectrum with exponentially localized states (Anderson Localization) for almost every frequency $\beta$. Avila \cite{A0} recently demonstrated that subcritical energies correspond to absolutely continuous spectrum for all irrational frequencies $\beta$ (it was shown for exponentially Liouville frequency in \cite{Aaa}). However, critical energies remain mysterious, in general.

Using his global theory \cite{AGlobalActa}, Avila showed that for any fixed irrational frequency $\beta$, for a (measure theoretically) typical potential $v\in C^\omega(\T,\R)$, the operator $H_{\beta,v,\theta}$ has no critical energy. Thus the occurrence of critical energies is a very rare phenomenon. In fact, before our model in the present paper, the only operators known to display critical energies are the critical Almost Mathieu Operator (i.e. the operator given by \eqref{schrodingeroperators} with potential function $v(x) = 2\cos(2\pi x)$, which is also known as Harper's Model) \cite[Theorem~19]{AGlobalActa}, and the extended Harper's Model \cite[Corollary~5.1]{JM2}. Moreover, our model is the first unitary operator to display critical energies.

The cocycle map associated to the operators $U_{\beta,\theta}$ (defined below in \eqref{eq:tmdef}) is meromorphic, not analytic. However it can be renormalized into an analytic map, which enables one to classify the energies in the spectrum as supercritical, subcritical and critical. Then, our stronger version of vanishing Lyapunov exponents on the spectrum is the following theorem:

\begin{theorem}\label{t.criticalenergy}
For all irrational frequencies $\beta$, the operator family $(U_{\beta,\theta})_{\theta \in \T}$ is of critical type in sense that the associated renormalized matrix cocycle $(\beta,N^z)$ defined by \eqref{eq:tmNdef} is critical for every $z\in \Sigma_\beta$.
\end{theorem}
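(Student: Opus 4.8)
The plan is to establish two things for every $z\in\Sigma_\beta$: first, that the Lyapunov exponent of the renormalized cocycle $(\beta,N^z)$ vanishes, ruling out supercriticality; and second, that the complexified cocycle $(\beta,N^z_\varepsilon)$ cannot have a uniform subexponential bound on any strip $\{|\Im\zeta|<\delta\}$, ruling out subcriticality. The main engine for the first part will be the strong self-duality of $U_{\beta,\theta}$ under the Fourier transform, which should already appear in the paper: the Aubry–André-type duality conjugates the cocycle at $z$ to (essentially) a cocycle associated to the same operator family, and combined with the fact that the Lyapunov exponent is an even function of the imaginary direction in Avila's theory, self-duality forces the acceleration to vanish at $\varepsilon=0$, hence by Avila's quantization of acceleration the exponent is constant in a neighborhood; since $z$ lies in the spectrum, standard Kotani/Thouless arguments (or direct application of the Johnson-type theorem quoted in the abstract, which identifies $\Sigma_\beta$ with the set of $z$ at which no dominated splitting exists) then pin this constant value at $0$. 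So $L(\beta,N^z)=0$ for all $z\in\Sigma_\beta$, which already gives the "vanishing Lyapunov exponent on the spectrum" half of the statement.

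For the exclusion of subcriticality, I would argue by contradiction following the template of Avila's analysis of the critical AMO (Theorem~19 of \cite{AGlobalActa}) and the extended Harper's model \cite{JM2}. If $(\beta,N^z)$ were subcritical, it would be (by Avila's almost-reducibility / global theory) bounded on a strip and hence, after a bounded analytic conjugacy, rotations-like; self-duality then transports this to a statement about the dual cocycle. The key obstruction that makes our model genuinely critical—and distinguishes it from the generic case—is the \emph{singularity} of the cocycle: the meromorphic cocycle map $A^z$ (before renormalization) has a pole, equivalently the renormalized $N^z$ has a determinant that vanishes on $\T$, so the cocycle is not in $\SL(2,\cdot)$ but only conjugate to something with nontrivial zero set. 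A subcritical cocycle, being bounded on a strip, cannot have $\det N^z_\zeta\to 0$ as $\zeta$ ranges over that strip while simultaneously being conjugate to a bounded (hence uniformly invertible up to the $\det$ normalization) family; tracking the degree/winding of $\det N^z_\zeta$ across the strip produces a contradiction with subexponential growth. This is where I expect to borrow the precise mechanism from \cite{JM2}, since the extended Harper's model has exactly the same singular feature.

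Concretely, the steps in order: (1) recall from the earlier sections the explicit form of the meromorphic cocycle \eqref{eq:tmdef}, its renormalization \eqref{eq:tmNdef} into $N^z$, and the analytic extension $N^z_\varepsilon$ to a complex strip; (2) compute the acceleration $\omega(\varepsilon)$ of $\varepsilon\mapsto L(\beta,N^z_\varepsilon)$ for large $|\varepsilon|$ via the explicit asymptotics of $N^z_\varepsilon$ (the dominant term is a diagonal matrix whose entries are exponentials in $\varepsilon$), read off the top Lyapunov exponent there, and use Avila's convexity + quantization to conclude $\omega(0)=0$ and $L(\beta,N^z_0)=0$; (3) invoke the Johnson-type theorem (singular CMV version) to identify $\Sigma_\beta$ with the no-dominated-splitting set, so that $z\in\Sigma_\beta$ forbids hyperbolicity and, together with $\omega(0)=0$, forbids supercriticality; (4) assume subcriticality and derive a contradiction using the singularity (pole structure) of the cocycle, i.e.\ the nontrivial zero set of $\det N^z_\zeta$, exactly as in the extended Harper's model. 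Steps (1)–(3) I expect to be essentially bookkeeping once the self-duality is in hand. The genuine obstacle is step (4): one must rule out subcriticality \emph{directly}, because unlike in the nonsingular AMO case there is no slick argument via $\varepsilon\to\infty$ alone—the pole forces us to understand the cocycle on the whole strip, and making the winding-number/degree argument rigorous in the presence of the singularity, while keeping careful track of the renormalization, is the crux of the proof.
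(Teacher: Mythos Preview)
Your proposal has the right scaffolding---large-$\varepsilon$ asymptotics of $N^z_\varepsilon$, convexity and quantization of acceleration, and the Johnson-type theorem identifying $\Sigma_\beta$ with the no-dominated-splitting set---but you have the dichotomy reversed, and this causes a genuine gap. The paper computes (Theorem~\ref{t.le:at:largeheight}) that $L(\beta,z;\varepsilon)=2\pi|\varepsilon|$ for large $|\varepsilon|$; combined with evenness in $\varepsilon$ (which comes from the elementary matrix symmetry $N^z(\theta-i\varepsilon)=-P\,\overline{N^z(\theta+i\varepsilon)}\,P$, not from Aubry duality), convexity, and Lemma~\ref{l:accelnothalf} (integrality of the acceleration, proved via the relation to the meromorphic $M^z$), the right-acceleration at $\varepsilon=0$ is forced to be either $0$ or $1$. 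The Johnson theorem (Theorem~\ref{t:DS:N}) together with Theorem~\ref{t.ds:affine} then says: $z\notin\Sigma_\beta$ iff $(\beta,N^z)\in\DS$ iff $L$ is affine around $0$ with $L_1>L_2$, which here means $\omega(0)=0$ and $L(0)>0$. Consequently $z\in\Sigma_\beta$ forces $\omega(0)=1$, and then convexity plus the large-$\varepsilon$ value pin down $L(\beta,z;\varepsilon)=2\pi|\varepsilon|$ for \emph{all} $\varepsilon$. Your claim that ``self-duality forces the acceleration to vanish at $\varepsilon=0$'' is therefore exactly backwards: on the spectrum the acceleration is $1$, not $0$; evenness of a convex function does not make the one-sided derivative vanish (witness $|\varepsilon|$).

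This reversal also makes your step~(4) unnecessary. Once $L(\beta,z;\varepsilon)=2\pi|\varepsilon|$ is established on $\Sigma_\beta$, non-subcriticality is immediate: $L(\beta,z;\varepsilon)>0$ for every $\varepsilon\neq 0$, so there is no strip on which the cocycle has a uniform subexponential bound. No winding-number or pole-tracking argument \`a la \cite{JM2} is needed here; the singularity of the cocycle enters only through Lemma~\ref{l:accelnothalf}, which rules out half-integer acceleration by relating $N^z$ to the $\SL(2,\C)$-valued $M^z_\varepsilon$ for $\varepsilon\neq 0$. Finally, note that Aubry duality plays no role in the paper's proof of Theorem~\ref{t.criticalenergy}; it is used elsewhere (Sections~\ref{sec:aubry}--\ref{sec:spectype}) to exclude point spectrum, not to compute Lyapunov exponents.
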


The structure of the paper is as follows. In Section~\ref{sec:bg}, we collect some useful background information and definitions. Section~\ref{sec:aubry} discusses Aubry duality for the present model. In Section~\ref{sec:spectype}, we discuss the spectral type. We begin by proving Theorem~\ref{t:uamospec}.\ref{t:singspec}. Next, we exploit Aubry duality to prove that the spectral type is purely continuous for all frequencies and almost all phases, which completes the proof of Theorem~\ref{t:uamospec}.\ref{t:contspec}. In Section~\ref{sec:zle}, we use global theory for singular one-frequency quasiperiodic cocycles to prove that the Lyapunov exponent vanishes on the spectrum. When combined with Kotani theory and the singular nature of the spectrum, this yields the conclusion of Theorem~\ref{t:uamospec}.\ref{t:zmspec}, namely, that the spectrum is a Cantor set of zero Lebesgue measure. In Section~\ref{sec:johnson-marx}, we work out an essential ingredient in our application of global theory, namely, a version of the theorem of Johnson for singular CMV matrices which identifies the resolvent set with the set of spectral parameters at which the associated matrix cocycles enjoy a dominated splitting.  To the best of our knowledge, this is only known for nonsingular CMV matrices, i.e., CMV matrices whose Verblunsky coefficients are bounded away from the unit circle \cite{GJ96,DFLY2}. We prove Johnson's theorem in the singular case by adapting the techniques in Marx \cite{marxpreprint} which were developed for singular Jacobi matrices. We expect that this result is of independent interest beyond quasiperiodic quantum walks, and so we formulate it in an appropriately general context.

\section*{Acknowledgements}

We are grateful to A.\ Avila, S.\ Jitomirskaya, and C.\ Marx for sharing their unpublished preprint \cite{AJM}. We are also grateful to D.\ Damanik for suggesting that we show purely singular continuous spectrum, and to S.\ Jitomirskaya for reminding us of the proof of criticality for energies in the spectrum of the critical AMO in \cite{AGlobalActa}, and to both of them for helpful discussions. We are also thankful to M.\ Zinchenko for helpful conversations regarding the use of tools in \cite{GZ06}.

\section{Background}\label{sec:bg}

\subsection{CMV Matrices and the CGMV Connection}

Given a sequence $\{ \alpha_n \}_{n \in \Z}$ of complex numbers where $\alpha_n \in \overline{\D} = \{ z \in \C : |z| \le 1\} $ for every $n \in \Z$, the associated \emph{singular extended CMV matrix}, $\E = \E_\alpha$, is a unitary operator on $\ell^2(\Z)$ defined by the matrix representation
\begin{equation} \label{def:extcmv}
\E
=
\begin{pmatrix}
\ddots & \ddots & \ddots & \ddots &&&&  \\
& \overline{\alpha_0}\rho_{-1} & -\overline{\alpha_0}\alpha_{-1} & \overline{\alpha_1}\rho_0 & \rho_1\rho_0 &&&  \\
& \rho_0\rho_{-1} & -\rho_0\alpha_{-1} & -\overline{\alpha_1}\alpha_0 & -\rho_1 \alpha_0 &&&  \\
&&  & \overline{\alpha_2}\rho_1 & -\overline{\alpha_2}\alpha_1 & \overline{\alpha_3} \rho_2 & \rho_3\rho_2 & \\
&& & \rho_2\rho_1 & -\rho_2\alpha_1 & -\overline{\alpha_3}\alpha_2 & -\rho_3\alpha_2 &    \\
&& && \ddots & \ddots & \ddots & \ddots &
\end{pmatrix},
\end{equation}
where $\rho_n = \left( 1 - |\alpha_n|^2 \right)^{1/2}$ for every $n \in \Z$. We refer to $\{\alpha_n\}_{n \in \Z}$ as the sequence of \emph{Verblunsky coefficients} of $\E$. Note that all unspecified matrix entries are implicitly assumed to be zero.

\bigskip


In \cite{CGMV}, Cantero, Gr\"unbaum, Moral, and Vel\'azquez observed that one may connect CMV matrices and quantum walks; let us briefly describe this connection. If $\alpha_{2n} \equiv 0$, then the CMV matrix in \eqref{def:extcmv} becomes
\begin{equation}\label{e.ecmvoddzero}
\mathcal{E}
=
\begin{pmatrix}
\ddots & \ddots & \ddots & \ddots &&&&  \\
& 0 & 0 & \overline{\alpha_1}  & \rho_1 &&&  \\
& \rho_{-1} & - \alpha_{-1} & 0 & 0 &&&  \\
&&& 0 & 0 & \overline{\alpha_3}  & \rho_3 &  \\
&&& \rho_1 & - \alpha_1 & 0 & 0 &    \\
&&&& \ddots & \ddots &  \ddots & \ddots
\end{pmatrix};
\end{equation}
the matrix in \eqref{e.ecmvoddzero} strongly resembles the matrix representation of $U$ in \eqref{e.umatrixrep}. Note, however, that $\rho_n \ge 0$ for all $n$, so \eqref{e.umatrixrep} and \eqref{e.ecmvoddzero} may not match exactly when $c_n^{kk}$ is not real and nonnegative. However, this can be easily resolved by conjugation with a suitable diagonal unitary; the interested reader is referred to \cite{CGMV} for details.

\bigskip
In fact, in Section~\ref{sec:johnson-marx}, we will define a class of generalized CMV matrices which includes both the standard formalism for CMV matrices and the operators that correspond to the quantum walks considered in the present paper. In particular, for our purposes later, let us also describe another factorization of $U$, which is closely tied to the usual $\mathcal L \mathcal M$ factorization of CMV matrices. More concretely, we have $U = JC'$, where
\begin{equation} \label{eq:Jdef}
J: \delta_n \otimes e_\uparrow
\mapsto
\delta_{n-1} \otimes e_\downarrow,
\quad
J: \delta_n \otimes e_\downarrow
\mapsto
\delta_{n+1} \otimes e_\uparrow
\end{equation}
and
\begin{equation} \label{eq:Cprimedef}
C' :\delta_n \otimes e_\uparrow
\mapsto
c_n^{21} \delta_n \otimes e_\uparrow + c_n^{11} \delta_n \otimes e_\downarrow,
\quad
C' :\delta_n \otimes e_\downarrow
\mapsto
c_n^{22} \delta_n \otimes e_\uparrow + c_n^{12} \delta_n \otimes e_\downarrow.
\end{equation}

\subsection{Transfer Matrices}

Since the present model is unitarily equivalent to a CMV matrix, we may write down $2 \times 2$ matrix cocycle which propagates solutions to the time-independent eigenvalue equation. More precisely, notice that $U=U_{\beta,\theta}$ is a finite-range operator; consequently, the definition \eqref{eq:ucoord:def} extends $U$ to a linear operator on the substantially larger vector space
\[
\C^{\Z} \otimes \C^2
=
\{ \psi: \Z \times\{\uparrow,\downarrow\} \to \C \}.
\]
Thus, we may speak of generalized solutions of the eigenvalue equation $U \psi = z\psi$ where $\psi \in \C^\Z \otimes \C^2$ need not decay at $\pm \infty$. Specifically, suppose $\psi \in \C^{\Z} \otimes \C^2$ satisfies $U_{\beta,\theta} \psi = z \psi$, where $z \in \C$, and define $\Psi: \Z \to \C^2$ by
\[
\Psi(n)
=
\begin{pmatrix}
\psi_{\uparrow,n} \\
\psi_{\downarrow,n-1}
\end{pmatrix}.
\]
Using \eqref{eq:ucoord:def}, we may write the equation $U_{\beta,\theta} \psi = z\psi$ in coordinates:
\begin{align*}
z\psi_{\uparrow,n+1}
& =
\cos(2\pi(n\beta + \theta)) \psi_{\uparrow,n} -\sin(2\pi(n\beta + \theta)) \psi_{\downarrow,n} \\
z\psi_{\downarrow,n-1}
& =
\sin(2\pi(n\beta + \theta)) \psi_{\uparrow,n} + \cos(2\pi(n\beta + \theta)) \psi_{\downarrow,n},
\end{align*}
Solving these equations to get $\psi_{\uparrow,n+1}$ and $\psi_{\downarrow,n}$ in terms of $\psi_{\uparrow,n}$ and $\psi_{\downarrow,n-1}$, we have
\[
\Psi(n+1)
=
M^z(n\beta + \theta)\Psi(n)
\text{ for all } n \in \Z,
\]
where $M^z$ is defined by
\begin{equation} \label{eq:tmdef}
M^z(x)
=
\sec(2\pi x)
\begin{pmatrix}
z^{-1} & -\sin(2\pi x) \\
-\sin(2\pi x) & z
\end{pmatrix},
\quad
x \in \T, \; z \in \C \setminus \{0\}.
\end{equation}
Thus, the matrix cocycle induced by $M^z$ and rotation by $\beta$ on $\T$ will be of critical importance in our spectral analysis of $U$. As noted before, $M^z$ is meromorphic (as a function of $x$), so we will also make use of the renormalized analytic (singular) cocycle map $N^z$ defined by
\begin{equation} \label{eq:tmNdef}
N^z(x)
=
-2i\cos(2\pi x) M^z(x)
=
\begin{pmatrix}
-2iz^{-1} & 2i \sin(2\pi x) \\
2i\sin(2\pi x) & -2iz
\end{pmatrix}
\end{equation}
for $x \in \T$ and $z \in \C \setminus \{0\}$.

\begin{remark}
As a consequence of the CGMV connection, there are several other reasonable families of transfer matrices which encode various aspects of the spectral theory of $U_{\beta,\theta}$. In particular, we may notice that the operator family $U_{\beta,\theta}$ falls into the setting described in Section~\ref{sec:johnson-marx} with $T:x \mapsto x+\beta$ and
\[
f(x)
=
\begin{pmatrix}
0 & 1 \\
1 & 0
\end{pmatrix},
\quad
g(x)
=
\begin{pmatrix}
\sin(2\pi x) &  \cos(2\pi x) \\
\cos(2\pi x) & -\sin(2\pi x)
\end{pmatrix},
\quad
x \in \T.
\]
In particular, using our $T$ and $g$ above, we have
\[
\begin{pmatrix}
C'_{2k-2,2k-2} & C'_{2k-2,2k-1} \\
C'_{2k-1,2k-2} & C'_{2k-1,2k-1}
\end{pmatrix}
=
\begin{pmatrix}
c_{k-1}^{21} & c_{k-1}^{22} \\
c_{k-1}^{11} & c_{k-1}^{12}
\end{pmatrix}
=
g(T^{k-1}\theta).
\]
 So $U_{\beta,\theta} = \E_{\theta-\beta}$ for $\theta \in \T$. Consequently, we may encode spectral characteristics of the family $(U_{\beta,\theta})_{\theta \in \T}$ using the (normalized) Gesztesy--Zinchenko transfer matrices for the family $(\E_x)_{x \in \T}$ (defined in \eqref{eq:gzmatdef}), which are given by
\[
A^z(x)
\eqdef
A_f^z(x) A_g^z(x)
=
\begin{pmatrix}
z^{-1} & -\sin(2\pi x) \\
-\sin(2\pi x) & z
\end{pmatrix}.
\]
Thus, $N^z(x) = -2i A^z(x)$, so the cocycles $(\beta,N^z)$ and $(\beta,A^z)$ share all essential dynamical characteristics; in particular, $(\beta,N^z)$ admits a dominated splitting (defined later) if and only if $(\beta,A^z)$ admits a dominated splitting. Thus, Theorem~\ref{t:johnson} immediately yields the following for our model:
\begin{theorem}\label{t:DS:N}
Fix $\beta \in \T \setminus \Q$, and denote by $\Sigma_\beta$ the common spectrum of $U_{\beta,\theta}$, $\theta \in \T$. Then
\[
\Sigma_\beta
=
\{ z \in \partial \D : (\beta,N^z) \notin \DS \}.
\]
\end{theorem}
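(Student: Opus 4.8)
The plan is to obtain Theorem~\ref{t:DS:N} as an immediate corollary of the general Johnson-type theorem, Theorem~\ref{t:johnson}, combined with the identification of $U_{\beta,\theta}$ with an extended CMV matrix recorded in the Remark above. First I would note that the family $(U_{\beta,\theta})_{\theta\in\T}$ is a dynamically defined singular CMV family in the sense of Section~\ref{sec:johnson-marx}: it arises from the base transformation $T:x\mapsto x+\beta$ on $\T$ together with the sampling functions $f$ and $g$ displayed in the Remark, and one has $U_{\beta,\theta}=\E_{\theta-\beta}$ for every $\theta\in\T$. Since $\beta$ is irrational, $T$ is minimal, so $\sigma(\E_x)$ is independent of $x$; because $\theta\mapsto\theta-\beta$ is a bijection of $\T$, this common value is exactly $\Sigma_\beta$.

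Next I would invoke Theorem~\ref{t:johnson}, which characterizes the spectrum of such a CMV family as the set of $z\in\partial\D$ at which the normalized Gesztesy--Zinchenko cocycle $(\beta,A^z)$, with $A^z=A_f^zA_g^z$ as computed in the Remark, fails to admit a dominated splitting. Combined with the previous paragraph this yields $\Sigma_\beta=\{z\in\partial\D:(\beta,A^z)\notin\DS\}$, so it remains only to replace $A^z$ by $N^z$.

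Finally I would use that $N^z(x)=-2i\,A^z(x)$ for all $x\in\T$ and $z\in\C\setminus\{0\}$, whence $N^z_n(x)=(-2i)^nA^z_n(x)$ for every $n$. Multiplying a (possibly singular) cocycle map by a fixed nonzero scalar leaves the induced projective dynamics unchanged, and the resulting uniform factor $2^n$ in the norms of the iterates cancels in every comparison of growth rates entering the definition of a dominated splitting; hence $(\beta,N^z)\in\DS$ if and only if $(\beta,A^z)\in\DS$, and the theorem follows. I do not anticipate a genuine obstacle in this argument: the mathematical content sits entirely in Theorem~\ref{t:johnson} (established in Section~\ref{sec:johnson-marx} by adapting Marx's analysis of singular Jacobi matrices) and in the index bookkeeping already carried out in the Remark. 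The only point deserving an explicit sentence is that a dominated splitting is a scale-invariant notion, so that neither the meromorphic-to-analytic renormalization $M^z\mapsto N^z$ nor the subsequent normalization $N^z=-2iA^z$ has any effect on it.
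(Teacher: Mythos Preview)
Your proposal is correct and matches the paper's approach exactly: the paper also derives Theorem~\ref{t:DS:N} as an immediate consequence of Theorem~\ref{t:johnson} together with the identification $U_{\beta,\theta}=\E_{\theta-\beta}$ and the relation $N^z=-2iA^z$, noting that scalar multiplication does not affect the presence of a dominated splitting. The only minor point worth adding is an explicit remark that the irrational rotation $T$ is a strictly ergodic isometry with $\supp(\mu)=\T$, so all hypotheses of Theorem~\ref{t:johnson} are satisfied.
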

\end{remark}

\section{Aubry Duality for the Unitary AMO} \label{sec:aubry}

In this section, we will discuss two versions of Aubry duality for the unitary Almost-Mathieu Operator. The first version expresses a classical version of duality between eigenfunctions and Bloch waves, while the second is a slightly more abstract framework which shows how (direct integrals of) the coin and shift operators are dual to one another via the Fourier transform. For the spectral analysis in the present paper, we only need the first version of duality. However, we explicitly work out the second version for two reasons. First, this second version is clearly a natural analogue of the usual Aubry duality transform, which has been extremely useful in the spectral analysis of the self-adjoint AMO as shown in \cite{GJLS}. Secondly, this version reflects how some authors think about duality in the physics literature. Concretely, this version of duality puts the heuristic discussion in \cite{ShiKat2010} on a firm mathematical foundation.

\bigskip

To describe the first version of duality, let us define the \emph{inverse Fourier transform} of a sequence $u \in \ell^2(\Z)$ by
\[
\widecheck{u}(x)
=
\sum_{n \in \Z} u_n  e^{2\pi i n x},
\quad
x \in \T.
\]
Additionally, for $f \in L^2(\T)$, we denote its \emph{Fourier transform} by
\begin{equation} \label{eq:ftdef}
\widehat f_k
=
\int_\T f(x) e^{-2\pi i k x} \, dx.
\end{equation}
Given $\psi \in \Hi$, let us use \eqref{eq:typicalelt} to view it as a pair of $\ell^2$ sequences $\psi_\uparrow = (\psi_{\uparrow,n})_{n \in \Z} \in \ell^2(\Z)$ and $\psi_\downarrow = (\psi_{\downarrow,n})_{n \in \Z} \in \ell^2(\Z)$. If we apply the inverse Fourier transform to the sequences $\psi_\uparrow$ and $\psi_\downarrow$ separately, we observe the following duality between solutions of $U \psi = z \psi$ and $U^\top \varphi = z \varphi$.

\begin{theorem} \label{t:aubry}
Suppose $\psi \in \Hi$ solves $U_{\beta,\theta} \psi = z \psi$. For a.e.\ $\xi \in \T$, we may define a state $\varphi = \varphi^\xi \in \C^\Z \otimes \C^2$ by
\begin{align*}
\varphi_{\uparrow,n}^\xi
& =
e^{2\pi i n\theta} \left(\widecheck\psi_\uparrow(n\beta + \xi) + i \widecheck\psi_\downarrow(n\beta + \xi) \right) \\
\varphi_{\downarrow,n}^\xi
& =
e^{2\pi i n\theta}\left(i\widecheck\psi_\uparrow(n\beta + \xi) +  \widecheck\psi_\downarrow(n\beta + \xi) \right).
\end{align*}
Then $U_{\beta,\xi}^\top \varphi^\xi = z \varphi^\xi$ for Lebesgue almost-every $\xi \in \T$.
\end{theorem}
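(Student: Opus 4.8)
\emph{Proof proposal.} The plan is to apply the inverse Fourier transform to the eigenvalue equation $U_{\beta,\theta}\psi=z\psi$, diagonalize the rotation coins to obtain first-order functional equations on $\T$ for suitable linear combinations of $\widecheck\psi_\uparrow$ and $\widecheck\psi_\downarrow$, and then sample those functional equations along the orbit $\{n\beta+\xi\}_{n\in\Z}$ to recover the eigenvalue equation for $U_{\beta,\xi}^\top$. First I would record what $U_{\beta,\xi}^\top\varphi=z\varphi$ means in coordinates: transposing the matrix \eqref{e.umatrixrep}, and writing $c_n^{11}=c_n^{22}=\cos(2\pi(n\beta+\xi))$ and $c_n^{21}=-c_n^{12}=\sin(2\pi(n\beta+\xi))$, the equation $U_{\beta,\xi}^\top\varphi=z\varphi$ becomes the system
\begin{align*}
z\varphi_{\uparrow,n}&=\cos(2\pi(n\beta+\xi))\,\varphi_{\uparrow,n+1}+\sin(2\pi(n\beta+\xi))\,\varphi_{\downarrow,n-1},\\
z\varphi_{\downarrow,n}&=-\sin(2\pi(n\beta+\xi))\,\varphi_{\uparrow,n+1}+\cos(2\pi(n\beta+\xi))\,\varphi_{\downarrow,n-1},
\end{align*}
for all $n\in\Z$.

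Next I would inverse-Fourier transform the two scalar equations coming from $U_{\beta,\theta}\psi=z\psi$ in \eqref{eq:ucoord:def}. Expanding $\cos$ and $\sin$ in terms of $e^{\pm 2\pi i(n\beta+\theta)}$ converts multiplication by the coin entries into translations by $\pm\beta$ dressed with phases $e^{\pm 2\pi i\theta}$; after regrouping, the combinations $P\eqdef\widecheck\psi_\uparrow+i\widecheck\psi_\downarrow$ and $Q\eqdef\widecheck\psi_\uparrow-i\widecheck\psi_\downarrow$ satisfy, for a.e.\ $x\in\T$, the first-order identities
\begin{align*}
e^{2\pi i\theta}P(x+\beta)&=z\bigl(e^{-2\pi i x}\widecheck\psi_\uparrow(x)+i\,e^{2\pi i x}\widecheck\psi_\downarrow(x)\bigr),\\
e^{-2\pi i\theta}Q(x-\beta)&=z\bigl(e^{-2\pi i x}\widecheck\psi_\uparrow(x)-i\,e^{2\pi i x}\widecheck\psi_\downarrow(x)\bigr).
\end{align*}
The combination $\widecheck\psi_\uparrow\pm i\widecheck\psi_\downarrow$ is precisely the one that diagonalizes the rotation matrices $R_{2\pi(n\beta+\theta)}$, which is why the state in the statement can be written compactly as $\varphi^\xi_{\uparrow,n}=e^{2\pi i n\theta}P(n\beta+\xi)$ and $\varphi^\xi_{\downarrow,n}=i\,e^{2\pi i n\theta}Q(n\beta+\xi)$.

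The remaining step is a direct verification. Sampling the two displayed identities at $x=n\beta+\xi$ expresses $e^{2\pi i\theta}P((n+1)\beta+\xi)$ and $e^{-2\pi i\theta}Q((n-1)\beta+\xi)$ in terms of $\widecheck\psi_\uparrow(n\beta+\xi)$, $\widecheck\psi_\downarrow(n\beta+\xi)$ and $e^{\pm 2\pi i(n\beta+\xi)}$; substituting these, together with $\varphi^\xi_{\uparrow,n\pm1}$ and $\varphi^\xi_{\downarrow,n\pm1}$ rewritten via $P$, $Q$ and $e^{2\pi i(n\pm1)\theta}=e^{2\pi i n\theta}e^{\pm 2\pi i\theta}$, into the two transposed equations from the first paragraph, the right-hand sides collapse to $z\varphi^\xi_{\uparrow,n}$ and $z\varphi^\xi_{\downarrow,n}$ after the elementary simplifications $\cos\gamma\pm i\sin\gamma=e^{\pm i\gamma}$ with $\gamma=2\pi(n\beta+\xi)$. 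I expect the only genuinely delicate point to be the measure-theoretic bookkeeping: the functional identities for $P$ and $Q$, as well as the pointwise values of $\widecheck\psi_\uparrow$ and $\widecheck\psi_\downarrow$, hold only for a.e.\ $x\in\T$, so one must argue—taking a countable union over $n\in\Z$ of null sets and using that $\beta$ is irrational—that there is a full-measure set of phases $\xi$ for which all of them hold simultaneously at every point $n\beta+\xi$; for such $\xi$ the state $\varphi^\xi$ is well defined as an element of $\C^\Z\otimes\C^2$ and solves $U_{\beta,\xi}^\top\varphi^\xi=z\varphi^\xi$. The algebraic content is a routine (if somewhat lengthy) trigonometric computation, so once the diagonalizing change of variables is identified the proof is essentially bookkeeping.
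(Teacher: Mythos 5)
Your proposal is correct and follows essentially the same route as the paper: inverse-Fourier-transform the eigenvalue equation, pass to the combinations $\widecheck\psi_\uparrow \pm i\,\widecheck\psi_\downarrow$ that diagonalize the rotation coins, and sample the resulting first-order functional identities along the orbit $\{n\beta+\xi\}$ to read off $U_{\beta,\xi}^\top\varphi^\xi = z\varphi^\xi$. The only cosmetic difference is in how the intermediate identities are packaged: the paper rewrites them as the closed system \eqref{eq:dual:wup}--\eqref{eq:dual:wdown} for $w_\uparrow = P$ and $w_\downarrow = iQ$ (in your notation), which makes the final substitution at $x = n\beta+\xi$ immediate, while you stop at the equivalent expressions for $e^{2\pi i\theta}P(x+\beta)$ and $e^{-2\pi i\theta}Q(x-\beta)$ in terms of $\widecheck\psi_\uparrow(x),\widecheck\psi_\downarrow(x)$ and then verify the transposed equation by back-substitution; both reduce to the same elementary trigonometric identity, and your handling of the a.e.\ bookkeeping (countable union of null sets over $n$) is the right observation, though irrationality of $\beta$ is not actually needed for that step.
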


To formulate the more abstract version of Aubry duality, we consider the direct integral of the family $U_{\beta,\theta}$ over $\theta \in \T$. More specifically, consider the larger Hilbert space $\widetilde \Hi = L^2(\Z \times \T \times \{\uparrow,\downarrow\})$, and denote by $U_\beta: \widetilde \Hi  \to \widetilde \Hi$ the operator which acts on the fiber over $\theta$ by $U_{\beta,\theta}$, i.e.:
\begin{equation} \label{def:abstract:aubry}
[U_\beta \psi](n,\theta,s)
=
(U_{\beta,\theta} \psi(\cdot,\theta,\cdot))(n,s),
\quad
\psi \in \widetilde \Hi, \;
n \in \Z, \; \theta \in \T, \; s \in \{\uparrow, \downarrow\}.
\end{equation}
In \eqref{def:abstract:aubry}, $\psi(\cdot,\theta,\cdot)$ denotes the element $v \in \ell^2(\Z) \otimes \C^2$ with coordinates $v_{s,n} = \psi(n,\theta,s)$, with $n \in \Z$ and $s \in \{\uparrow, \downarrow\}$. It is not hard to check that $U_\beta$ defines a unitary operator from $\widetilde \Hi$ to itself. Now, denote the  Fourier transform on $\widetilde \Hi$ by $\,\widehat{\cdot}$, that is, for $\varphi \in \widetilde \Hi $, we define
\[
\widehat{\varphi}(n,\theta,s)
=
\sum_{m \in \Z} \int_{\T} e^{-2\pi i m \theta} e^{-2\pi i n x} \varphi(m,x,s) \, dx,
\quad
n \in \Z, \; \theta \in \T, \; s \in \{\uparrow,\downarrow\}.
\]
We then define the Aubry dual operator $\A : \widetilde \Hi  \to \widetilde \Hi$ and a change of coordinates in spin space $\mathcal X: \widetilde \Hi \to \widetilde \Hi$ by
\[
[\A \psi](n,\theta,s) = \widehat{\psi}(n,n\beta+\theta,s),
\quad
\psi \in \widetilde \Hi, \;
n \in \Z, \; \theta \in \T, \; s \in \{\uparrow, \downarrow\}
\]
and
\[
\begin{pmatrix}
[\mathcal X \psi](n,\theta,\uparrow) \\
[\mathcal X \psi](n,\theta,\downarrow)
\end{pmatrix}
=
\frac{1}{\sqrt{2}}
\begin{pmatrix}
i &  1 \\
1 & i
\end{pmatrix}
\begin{pmatrix}
\psi(n,\theta,\uparrow) \\
\psi(n,\theta,\downarrow)
\end{pmatrix},
\quad
\psi \in \widetilde \Hi, \; n \in \Z, \; \theta \in \T.
\]

\begin{theorem} \label{t:aubry:abstract}
For all irrational $\beta \in \T$, one has
\[
\mathcal X\mathcal A U_\beta
=
U_\beta^\top \mathcal A \mathcal X.
\]
\end{theorem}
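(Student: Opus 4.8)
The plan is to verify the operator identity $\mathcal X \mathcal A U_\beta = U_\beta^\top \mathcal A \mathcal X$ by unwinding all four operators in coordinates and comparing matrix elements, using the factorization $U_{\beta,\theta} = J C'_\theta$ recorded in \eqref{eq:Jdef}--\eqref{eq:Cprimedef}. Since $\mathcal X$ is a fiberwise constant unitary and $\mathcal A$ is a unitary built from the Fourier transform composed with the shear $(n,\theta)\mapsto(n,n\beta+\theta)$ in the base, it suffices to check the identity on the dense set of $\psi \in \widetilde{\Hi}$ that are finitely supported in $n$ and trigonometric polynomials in $\theta$, where every sum converges absolutely and Fubini applies without comment.

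First I would decompose $U_\beta = J_\beta C'_\beta$, where $J_\beta$ and $C'_\beta$ act on $\widetilde\Hi$ fiberwise by \eqref{eq:Jdef} and \eqref{eq:Cprimedef} with $c_n^{ij} = c_{n,\theta}^{ij}$ given by the rotation $R_{2\pi(n\beta+\theta)}$ as in \eqref{eq:coindef}. The key computational observation is how the Fourier transform intertwines the two building blocks. The shift $J_\beta$ commutes with Fourier transform in the $\theta$-variable and gets conjugated into multiplication by a phase in the shifted $\theta$; concretely, the combination $\mathcal A J_\beta$ turns the integer shift $n \mapsto n\pm 1$ into multiplication by $e^{\pm 2\pi i \theta}$ together with a spin flip, which is precisely the structure of the \emph{coin} part $C'$ for the dual model. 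Symmetrically, the fiberwise multiplication operator $C'_\beta$ — whose matrix entries are the Fourier modes $e^{\pm 2\pi i \theta}$ of $\cos(2\pi(n\beta+\theta))$ and $\sin(2\pi(n\beta+\theta))$ after using the shear in $\mathcal A$ — becomes, after $\mathcal A$, a finite-range operator in $n$ with the structure of the \emph{shift} $J$ for the dual model. The role of the matrix $\tfrac{1}{\sqrt2}\left(\begin{smallmatrix} i & 1 \\ 1 & i\end{smallmatrix}\right)$ in $\mathcal X$ is exactly to diagonalize the $2\times 2$ rotation-matrix data appearing in $C'$ against its Fourier-dual form, which is why the same similarity shows up in the classical statement Theorem~\ref{t:aubry} (the coordinates $\widecheck\psi_\uparrow + i\widecheck\psi_\downarrow$ and $i\widecheck\psi_\uparrow + \widecheck\psi_\downarrow$ there are the rows of this matrix applied to $(\widecheck\psi_\uparrow,\widecheck\psi_\downarrow)$). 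After substituting these, both sides reduce to the same finite-range operator on $\widetilde\Hi$, namely $U_\beta^\top$ precomposed with $\mathcal X\mathcal A$, and the identity follows by matching coefficients.

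An alternative, and in some ways cleaner, route is to deduce the abstract identity directly from Theorem~\ref{t:aubry}: the construction $\psi \mapsto \varphi^\xi$ there is precisely the fiber-over-$\xi$ description of $\mathcal X\mathcal A$ (with $\widecheck\psi = $ inverse Fourier transform, and the extra phases $e^{2\pi i n\theta}$ absorbing the shear), so the statement $U_{\beta,\xi}^\top \varphi^\xi = z\varphi^\xi$ for solutions of $U_{\beta,\theta}\psi = z\psi$ is the ``eigenvector'' shadow of the operator identity. One upgrades this to an operator identity on all of $\widetilde\Hi$ by noting both sides are bounded and agree on a dense set spanned by (generalized) eigenfunctions, or more robustly by simply carrying out the coordinate computation above without invoking $z$ at all. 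I expect the main obstacle to be purely bookkeeping: keeping consistent track of the shear $\theta \mapsto n\beta + \theta$ inside $\mathcal A$, the difference between the Fourier transform and inverse Fourier transform in the two variables, and the transpose in $U_\beta^\top$ (which reverses the roles of the $c^{12}$ and $c^{21}$ entries and hence of the two rows of the $\mathcal X$-matrix). Getting every sign and phase to line up — in particular verifying that the shift-direction conventions in $J$ match the Fourier-dual of the coin entries and vice versa — is where all the work lies; there is no analytic subtlety once the dense-subspace reduction is in place.
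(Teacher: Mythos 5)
Your proposal follows essentially the same route as the paper: the paper likewise factors $U_\beta = JC'_\beta$, shows that conjugation by $\mathcal A$ exchanges the shift $J$ with (the transpose of) an off-diagonal phase-multiplication operator $Q_\beta$ while conjugation by $\mathcal X$ exchanges the coin $C'_\beta$ with $Q_\beta$, and then composes these intertwining relations to recover $U_\beta^\top = C'_\beta J$. The only refinement worth noting is that the phases produced by the shear in $\mathcal A$ are $e^{\pm 2\pi i(n\beta+\theta)}$ rather than $e^{\pm 2\pi i\theta}$, but that is exactly the bookkeeping you anticipated.
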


\begin{proof}[Proof of Theorem~\ref{t:aubry}]
Fix $\beta$ and $\theta$. First, let us recast $U\psi = z\psi$ in coordinates by using \eqref{eq:ucoord:def}:
\begin{align*}
z\psi_{\uparrow,n}
& =
\psi_{\uparrow,n-1} c_{n-1}^{11} + \psi_{\downarrow,n-1} c_{n-1}^{12} \\
z\psi_{\downarrow,n}
& =
\psi_{\uparrow,n+1} c_{n+1}^{21} + \psi_{\downarrow,n+1} c_{n+1}^{22}.
\end{align*}
 Multiply both sides (of the first equation) by $e^{2\pi i n x}$, sum over $n \in \Z$, and re-index the sum to get
\begin{align*}
z \widecheck \psi_\uparrow(x)
& =
\sum_{n\in\Z} \left( c_{n-1}^{11} \psi_{\uparrow,n-1}  + c_{n-1}^{12} \psi_{\downarrow,n-1} \right) e^{2\pi i n x} \\
& =
e^{2\pi i x} \sum_{n\in\Z} \left(\cos(2\pi(n\beta+\theta)) \psi_{\uparrow,n}  - \sin(2\pi(n\beta+\theta)) \psi_{\downarrow,n} \right) e^{2\pi i n x}
\end{align*}
for Lebesgue almost-every $x \in \T$. Applying the exponential definitions of the trigonometric functions and the definition of the inverse Fourier transform, the calculation above yields
\begin{equation} \label{eq:psihatupw}
z\widecheck \psi_\uparrow(x)
=
\frac{e^{2\pi i x}}{2} \left(  e^{2\pi i \theta} w_\uparrow(x+\beta) - ie^{-2\pi i \theta} w_\downarrow(x-\beta) \right)
\end{equation}
for a.e.\ $x \in \T$, where
\[
w_\uparrow(x)
=
\widecheck \psi_\uparrow(x) + i \widecheck \psi_\downarrow(x),
\quad
w_\downarrow(x)
=
i\widecheck \psi_\uparrow(x) + \widecheck \psi_\downarrow(x),
\quad
x \in \T.
\]
Similarly,
\begin{equation} \label{eq:psihatdownw}
z \widecheck \psi_\downarrow(x)
=
\frac{e^{-2\pi i x}}{2}\left( -ie^{2\pi i \theta} w_\uparrow(x+\beta) + e^{-2\pi i \theta}w_\downarrow(x-\beta) \right)
\end{equation}
for a.e.\ $x$. Using \eqref{eq:psihatupw} and \eqref{eq:psihatdownw} in conjunction with the definition of $w$, and the exponential definitions of the trigonometric functions, we get the following for a.e.\ $x \in \T$:
\begin{align}
\label{eq:dual:wup}
z w_\uparrow(x)
= &
\cos(2 \pi x) e^{ 2\pi i \theta} w_\uparrow(x + \beta)
+ \sin(2 \pi x)e^{-2\pi i \theta} w_\downarrow(x - \beta), \\
\label{eq:dual:wdown}
z w_\downarrow(x)
= &
 - \sin(2 \pi x) e^{ 2\pi i \theta} w_\uparrow(x + \beta)
 + \cos(2 \pi x)e^{-2\pi i \theta} w_\downarrow(x - \beta).
\end{align}
Now, let $\varphi =\varphi^\xi \in \C^\Z\otimes\C^2$ be defined as in the statement of Theorem~\ref{t:aubry}, i.e., $\varphi^\xi_{s,n} = e^{2\pi i n \theta} w_s(n\beta + \xi)$ for $n \in \Z$ and $s \in \{\uparrow,\downarrow\}$. Then, with $y = y(\xi,n) = n\beta+\xi$, we have
\begin{align*}
[U^\top_{\beta,\xi} \varphi]_{\uparrow,n}
& =
  \cos(2\pi y) e^{2\pi i (n + 1)\theta} w_{\uparrow}(y+\beta)
+ \sin(2\pi y) e^{2\pi i (n - 1)\theta} w_{\downarrow}(y-\beta) \\
& =
e^{2\pi i n \theta}
(\cos(2\pi y) e^{2\pi i \theta} w_{\uparrow}(y+\beta)
+ \sin(2\pi y) e^{-2\pi i \theta} w_{\downarrow}(y - \beta)
)\\
& =
e^{2\pi i n\theta} zw_{\uparrow}(y) \\
& =
z \varphi_{\uparrow,n}
\end{align*}
for a.e.\ $\xi$ by \eqref{eq:dual:wup}. Similarly, using \eqref{eq:dual:wdown}, we have:
\[
[U^\top_{\beta,\xi} \varphi]_{\downarrow,n}
=
z \varphi_{\downarrow,n},
\quad
\text{a.e.\ } \xi \in \T.
\]
Consequently, $U^\top_{\beta,\xi} \varphi^\xi = z \varphi^\xi$ for a.e.\ $\xi$, as desired.
\end{proof}
Notice that our statement of duality differs slightly from the analogous statement for the critical AMO, which is indeed self-dual. Instead, in our setting, $U$ is dual to $U^\top = U^{-1}$. However, the duality between $U$ and $U^{-1}$ is just as useful as the self-duality exploited in the self-adjoint setting. For instance, as is the case for the critical AMO, an immediate consequence of the duality in Theorem~\ref{t:aubry} is that $U_{\beta,\theta}$ may never have $\ell^1$ eigenfunctions; the companion result for the critical AMO may be found in \cite{De}.

\begin{coro}\label{c:noL1solution}
If $\psi$ is a nonzero solution to $U_{\beta,\theta}\psi = z\psi$ with $z \in \partial \D$, then $\psi \notin \ell^1(\Z,\C^2)$, i.e.,
\[
\sum_{n \in \Z} \left( |\psi_{\uparrow,n}| + |\psi_{\downarrow,n}| \right)
=
\infty.
\]
\end{coro}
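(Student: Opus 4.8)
The plan is to adapt Delyon's argument for the critical almost-Mathieu operator (cf.\ \cite{De}), combining the duality in Theorem~\ref{t:aubry} with a conserved-Wronskian dichotomy. Suppose, for a contradiction, that $\psi\neq0$ lies in $\ell^1(\Z,\C^2)$ and satisfies $U_{\beta,\theta}\psi=z\psi$ with $z\in\partial\D$. Since $\psi_\uparrow,\psi_\downarrow\in\ell^1(\Z)$, the inverse Fourier transforms $\widecheck\psi_\uparrow,\widecheck\psi_\downarrow$ are continuous on $\T$; consequently the Fourier-series manipulations carried out in the proof of Theorem~\ref{t:aubry} hold pointwise everywhere rather than merely almost everywhere, so the dual state $\varphi^\xi$, whose entries are $\varphi^\xi_{s,n}=e^{2\pi i n\theta}w_s(n\beta+\xi)$ with $w_\uparrow=\widecheck\psi_\uparrow+i\widecheck\psi_\downarrow$ and $w_\downarrow=i\widecheck\psi_\uparrow+\widecheck\psi_\downarrow$, satisfies $U^\top_{\beta,\xi}\varphi^\xi=z\varphi^\xi$ for \emph{every} $\xi\in\T$ --- in particular for $\xi=\theta$. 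Set $\varphi:=\varphi^\theta$; since $w_\uparrow,w_\downarrow$ are continuous on the compact group $\T$, we have $\varphi\in\ell^\infty(\Z,\C^2)$, and since $\psi\neq0$ the pair $(w_\uparrow,w_\downarrow)$ is not identically zero (the linear map sending $(\widecheck\psi_\uparrow,\widecheck\psi_\downarrow)$ to $(w_\uparrow,w_\downarrow)$ is invertible).

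Next I would transport $\varphi$ back to the original operator. Conjugating $U^\top_{\beta,\theta}\varphi=z\varphi$ and using that $U$ is unitary with $|z|=1$ (so that $\overline{U^\top}=U^*=U^{-1}$ and $\bar z=z^{-1}$) yields $U_{\beta,\theta}\bar\varphi=z\bar\varphi$. Thus $\psi$ and $\bar\varphi$ are two generalized solutions of the \emph{same} equation $U_{\beta,\theta}\,\cdot=z\,\cdot$, which is propagated by the unimodular cocycle $M^z$ of \eqref{eq:tmdef}; hence the associated Wronskian $W(n)=\det\big[\,\Psi(n)\mid\overline\Phi(n)\,\big]=\psi_{\uparrow,n}\overline{\varphi_{\downarrow,n-1}}-\psi_{\downarrow,n-1}\overline{\varphi_{\uparrow,n}}$, where $\Psi(n)=(\psi_{\uparrow,n},\psi_{\downarrow,n-1})^\top$ and $\overline\Phi(n)=(\overline{\varphi_{\uparrow,n}},\overline{\varphi_{\downarrow,n-1}})^\top$, is independent of $n$ (the finitely many indices with $n\beta+\theta\in\{1/4,3/4\}$, where $M^z$ has a pole, are harmless, as the conservation law can also be read off directly from the coordinate form \eqref{eq:ucoord:def} of the eigenvalue equation, or one may work with the pole-free Gesztesy--Zinchenko cocycle). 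Letting $n\to+\infty$, the factor $\psi_{\uparrow,n},\psi_{\downarrow,n-1}\to0$ (valid since $\psi\in\ell^2$) beats the bounded $\bar\varphi$, so $W(n)\to0$, whence $W\equiv0$.

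Finally I would extract the contradiction. Because $\psi\neq0$ and the transfer matrices are invertible, $\Psi(n)\neq0$ for every $n$, so $W\equiv0$ forces $\overline\Phi(n)=c_n\Psi(n)$ for scalars $c_n$; propagation by the common cocycle $M^z$ then makes $c_n\equiv c$ for a fixed $c\in\C$. Hence $|w_s(n\beta+\theta)|=|\varphi_{s,n}|=|c|\,|\psi_{s,n}|$ for all $n$ and $s\in\{\uparrow,\downarrow\}$, so $w_s(n\beta+\theta)\to0$ as $|n|\to\infty$. Since $\beta$ is irrational, $\{n\beta+\theta:|n|>N\}$ is dense in $\T$ for every $N$, so continuity of $w_\uparrow,w_\downarrow$ gives $w_\uparrow\equiv w_\downarrow\equiv0$, hence $\widecheck\psi_\uparrow\equiv\widecheck\psi_\downarrow\equiv0$, hence $\psi=0$ --- contradicting $\psi\neq0$. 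I expect the only delicate point to be the very first step, namely upgrading the almost-everywhere duality of Theorem~\ref{t:aubry} to a pointwise identity valid precisely at the phase $\xi=\theta$; this is exactly what the hypothesis $\psi\in\ell^1$ provides, and it is what legitimizes comparing $\psi$ with $\bar\varphi$ through a single conserved Wronskian. Everything after that is the routine ``two solutions of a unimodular cocycle must be proportional'' argument.
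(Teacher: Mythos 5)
Your argument is correct, and it shares the paper's skeleton: the $\ell^1$ hypothesis upgrades the a.e.\ identities in the proof of Theorem~\ref{t:aubry} to pointwise ones, so that $\varphi^\theta$ is a genuine bounded generalized solution whose entries sample continuous functions $w_\uparrow,w_\downarrow$ that are not both zero; a Wronskian argument then produces the contradiction. The two proofs diverge, however, in \emph{how} $\varphi^\theta$ is fed into the transfer-matrix machinery. The paper keeps $\varphi^\theta$ on the $U^\top$ side: using $U=JC'$, $U^\top=C'J$, it identifies solutions of $U^\top v=zv$ with $C'\eta$ for $\eta$ solving $U\eta=z\eta$, observes that $C'$ is a direct sum of $2\times2$ unitaries (so the site-by-site growth of $v$ equals that of $\eta$), and then argues that the two-dimensional solution space of $U\eta=z\eta$ exhibits a strict decay/blow-up dichotomy (via the Wronskian of $\psi$ with a complementary solution $\psi'$); since $\varphi^\theta$ is bounded and does not tend to zero, it falls into neither class. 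You instead conjugate: for $z\in\partial\D$ the formal identity $\overline{U^\top}=U^{-1}$ turns $U^\top\varphi^\theta=z\varphi^\theta$ into $U\,\overline{\varphi^\theta}=z\,\overline{\varphi^\theta}$, so $\overline{\varphi^\theta}$ and $\psi$ solve the \emph{same} equation and can be compared through one Wronskian $W$, which vanishes (decaying $\psi$, bounded $\overline{\varphi^\theta}$), forcing $\overline{\varphi^\theta}=c\psi$ and hence $w_\uparrow\equiv w_\downarrow\equiv 0$, i.e.\ $\psi=0$. Your route is somewhat more self-contained (no $C'J$ manipulation, no auxiliary $\psi'$) and yields a sharper intermediate fact ($\overline{\varphi^\theta}$ is literally proportional to $\psi$) rather than the paper's softer ``neither decays nor blows up.''

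One caveat: the parenthetical claim that indices with $n\beta+\theta\in\{1/4,3/4\}$ are ``harmless'' because the conservation law can be read off from the coordinate form is overstated. At such $n_0$ the transfer step genuinely degenerates (the equations at $n_0$ force $z\eta_{\downarrow,n_0-1}=\pm\eta_{\uparrow,n_0}$ while leaving $\eta_{\downarrow,n_0}$ unconstrained), so $W$ is \emph{not} simply conserved across $n_0$; likewise the GZ cocycle $A^z_fA^z_g$ has vanishing determinant there, so it does not rescue constancy either. What does survive is that this very constraint pins both $\Psi(n_0)$ and $\overline\Phi(n_0)$ to the same one-dimensional line, so $W(n_0)=0$ automatically; together with constancy of $W$ on the pole-free intervals and $W(n)\to 0$ as $|n|\to\infty$, this still gives $W\equiv 0$, and the proof goes through. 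The paper's own proof tacitly works in the pole-free case as well, so both arguments require this modest extra observation to cover every $\theta$.
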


\begin{proof}
Suppose on the contrary that $U_{\beta,\theta} \psi = z\psi$ for some nonzero $\psi \in \ell^1(\Z,\C^2)$. Then the series which define the inverse Fourier transforms of $\psi_\uparrow$ and $\psi_\downarrow$ converge uniformly on $\T$, so we may follow the argument in the proof of Theorem~\ref{t:aubry} to see that all of the a.e.\ statements may be replaced by statements which hold uniformly on $\T$. In particular, we have $U^\top_{\beta,\theta} \varphi^\theta = z \varphi^\theta$. Since $\psi \in \ell^1(\Z,\C^2) \setminus \{0\}$, $\widecheck\psi_\uparrow$ and $\widecheck\psi_\downarrow$ are continuous and not identically zero; thus, $\varphi$ is bounded and does not go to zero at $\pm \infty$, which is impossible. To see this, notice that the space of solutions to
\begin{equation} \label{eq:geneigeq}
U_{\beta,\theta} \eta = z\eta,
\quad
\eta \in \C^{\Z} \otimes \C^2
\end{equation}
is two-dimensional, so we may choose a second element $\psi' \in \C^{\Z} \otimes \C^2$ so that $\{\psi,\psi'\}$ spans the space of solutions of \eqref{eq:geneigeq}. Since $|\det(M^z(x))| = 1$ for all $x \in \T \setminus\{1/4,3/4\}$, the modulus of the Wronskian of $(\psi_{n+1,\uparrow},\psi_{n,\downarrow})^\top$ and $(\psi'_{n+1,\uparrow},\psi'_{n,\downarrow})^\top$ does not depend on $n$, and hence:
\[
\lim_{|n|\to \infty} \left\|(\psi'_{n+1,\uparrow}, \psi'_{n,\downarrow})^\top\right\|
=
\infty.
\]
Recall that $U = JC'$ and $U^\top = C'J$, where $J$ and $C'$ are defined by \eqref{eq:Jdef} and \eqref{eq:Cprimedef}, so $U^\top v = zv$ if and only if $v = C'\eta$, where $\eta$ solves \eqref{eq:geneigeq}. Thus, since $C'$ is a direct sum of $2 \times 2$ unitary matrices, all solutions of $U_{\beta,\theta}^\top v = zv$ either decay or blow up at $\pm \infty$. However, $\varphi^\theta$ exhibits neither of these behaviors, which is a contradiction.
\end{proof}

\begin{proof}[Proof of Theorem~\ref{t:aubry:abstract}]

Let $\mathcal X$ be as in the statement of the Theorem, and notice that $\A$ and $\mathcal X$ commute with one another. Using the factorization $U = JC'$, we obtain a factorization $U_\beta = J C'_\beta$, where $C_\beta'$ and $J$ act on $\widetilde \Hi$ as follows:
\begin{align*}
[J \psi](n,\theta,\uparrow)
& =
\psi(n-1,\theta,\downarrow) \\
[J \psi](n,\theta,\downarrow)
& =
\psi(n+1,\theta,\uparrow) \\
[C'_\beta \psi](n,\theta,\uparrow)
& =
\sin(2\pi(n\beta + \theta)) \psi(n,\theta,\uparrow)
+ \cos(2\pi(n\beta + \theta)) \psi(n,\theta,\downarrow) \\
[C'_\beta \psi](n,\theta,\downarrow)
& =
\cos(2\pi(n\beta + \theta)) \psi(n,\theta,\uparrow)
- \sin(2\pi(n\beta + \theta)) \psi(n,\theta,\downarrow).
\end{align*}
Evidently, $C'_\beta$ and $J$ are real-symmetric, so it follows that $U_\beta^\top = C'_\beta J$.
\newline

Using the explicit form of $C_\beta'$, one may perform a direct calculation to see that $\mathcal X^* C_\beta' \mathcal X = Q_\beta$, where
\[
\begin{pmatrix}
[Q_\beta \psi](n,\theta,\uparrow) \\
[Q_\beta \psi](n,\theta,\downarrow)
\end{pmatrix}
=
\begin{pmatrix}
0 & e^{-2\pi i (n\beta + \theta)} \\
e^{2\pi i (n\beta + \theta)} & 0
\end{pmatrix}
\begin{pmatrix}
\psi(n,\theta,\uparrow) \\
\psi(n,\theta,\downarrow)
\end{pmatrix}.
\]
A similar calculation reveals that $\mathcal X^* Q_\beta^\top \mathcal X = C_\beta'$, as well. Next, we compute $J \mathcal A$. Using the definitions, we have
\begin{align*}
[ J\A \psi ](n,\theta,\uparrow)
& =
[ \A \psi ](n-1,\theta,\downarrow) \\
& =
\sum_{m \in \Z} \int_{\T} \! e^{-2\pi i m((n-1)\beta + \theta)} e^{-2\pi i (n-1) x} \psi(m,x,\downarrow) \, dx \\
& =
\sum_{m \in \Z} \int_{\T} \! e^{-2\pi i m(n\beta+\theta)} e^{-2\pi i n x} e^{2\pi i (m\beta + x)} \psi(m,x,\downarrow) \, dx.
\end{align*}
Consequently,
\[
[\A^* J \A \psi](n,\theta,\uparrow)
=
e^{2\pi i (n\beta + \theta)} \psi(n,\theta,\downarrow).
\]
Similarly, one obtains
\[
[\A^* J \A \psi](n,\theta,\downarrow)
=
e^{-2\pi i (n\beta + \theta)} \psi(n,\theta,\uparrow).
\]
Thus, we obtain $\A^* J \A = Q_\beta^\top$. Similar calculations prove that $\A$ conjugates $Q_\beta$ to $J$. More specifically, we may compute
\begin{align*}
[Q_\beta \A \psi ](n,\theta,\uparrow)
& =
e^{-2\pi i (n\beta+\theta)} [ \A \psi ](n,\theta,\downarrow) \\
& =
\sum_{m \in \Z} \int_{\T} \! e^{-2\pi i (n\beta + \theta)} e^{- 2\pi i m(n\beta + \theta)} e^{-2\pi i n x} \psi(m,x,\downarrow)  \, dx \\
& =
\sum_{m \in \Z} \int_{\T} \!  e^{- 2\pi i m(n\beta + \theta)} e^{-2\pi i n x} \psi(m-1,x,\downarrow)  \, dx.
\end{align*}
Consequently, $[\A^* Q_\beta \A \psi](n,\theta,\uparrow) = \psi(n-1,\theta,\downarrow)$. Similarly, $[\A^* Q_\beta \A \psi](n,\theta,\downarrow) = \psi(n+1,\theta,\uparrow)$. Consequently, $\A^* Q_\beta \A = J$. Since $\mathcal X$ and $\mathcal A$ commute, it follows that
\[
\mathcal X^* \A^*  U_\beta \A \mathcal X
=
\mathcal X^* \A^*  J \A \mathcal X \A^*  \mathcal X^* C_\beta'\mathcal X \A
=
\mathcal X^* Q_\beta^\top \mathcal X \A^* Q_\beta \A
=
C_\beta' J
=
U_\beta^\top
\]
as desired.
\end{proof}

\section{The Spectral Type} \label{sec:spectype}

As a pleasant consequence of the duality described in Theorem~\ref{t:aubry}, let us see how it may be used to deduce absence of point spectrum for almost all parameters. Specifically, we will show that the unitary AMO has no point spectrum for all irrational $\beta$ and then for almost every $\theta$. Let us begin by proving that the spectral type of $U_{\beta,\theta}$ is always purely singular. This is a standard ``high-barrier'' argument using results of trace-class scattering theory.

\begin{proof}[Proof of Theorem~\ref{t:uamospec}.\ref{t:singspec}]
 Since $\beta$ is irrational, we may choose integers $n_j \in \Z$ with $ \cdots < n_{-1} < n_0 < n_1 < \cdots$ so that $|c_{n_j}^{11}| = |\cos(2\pi(n_j\beta+\theta))| < 2^{-|j|}$ for all $j \in \Z$. Consequently, it is not hard to see that $U_{\beta,\theta}$ is a trace-class perturbation of the quantum walk update rule $\widetilde U$ given by decoupling at sites in the set $\{ n_j : j \in \Z\}$. Concretely, $\widetilde U$ is given by specifying coins
$$
\widetilde C_{n_j}
=
\begin{pmatrix}
0 & -\sgn(\sin(2\pi(n_j\beta + \theta))) \\
\sgn(\sin(2\pi(n_j\beta + \theta))) & 0
\end{pmatrix},
\quad
j \in \Z,
$$
and $\widetilde C_n = C_n = R_{2\pi(n\beta+\theta)}$ for all other values of $n$. Evidently, $\widetilde U$ is a direct sum of finite-dimensional operators; consequently, $\widetilde U$ has no a.c.\ spectrum. Since trace-class perturbations leave a.c.\ spectrum invariant, the a.c.\ spectrum of $U$ is empty; see, e.g., \cite[Chapter~6]{SimTrace}.
\end{proof}

As we proceed with the argument which excludes eigenvalues, we will need to pass between the cocycles $M^z$ and $N^z$ defined by \eqref{eq:tmdef} and \eqref{eq:tmNdef}. Clearly, to relate these objects, one must have fine control over products of the form
\begin{equation} \label{eq:cosproddef}
2^k\prod_{j=0}^{k-1}|\cos[2\pi(\theta+j\beta)]|.
\end{equation}
For example, using the Birkhorff ergodic theorem and formula \eqref{eq:logcosint} below, one may conclude that
\[
\lim_{k \to \infty} \frac1k\log\left( 2^k\prod_{j=0}^{k-1}|\cos[2\pi(\theta+j\beta)]| \right)
=
0,
\]
for almost every $\theta \in \T$ by ergodicity of $x \mapsto \theta+\beta$. However, this yields rather poor control over the product in \eqref{eq:cosproddef}, so one must go beyond the ergodic theorem. Thus, the following proposition (which is~\cite[Theorem 5.3]{AJM}) is essential to our arguments.

\begin{prop}\label{p:subharmonic:sum}
Let $f:\T\rightarrow\C$ be analytic and $\beta$ a fixed irrational number. Let $\{\frac{p_n}{q_n}\}_{n\in\Z_+}$ be the continued fraction approximants of $\beta$. Then there exists a constant $C = C(\beta) > 0$ and a subsequence $(q_{n_l})$ of $(q_n)$ such that
\[
\sum^{q_{n_l}-1}_{j=0}\log|f(\theta+j\beta)|-q_{n_l}\int_\T\log|f(x)| \, dx
\le
C
\]
uniformly in $\theta\in\T$.
\end{prop}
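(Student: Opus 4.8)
The plan is to exploit the subharmonicity of the function
\[
u(\zeta)
=
\sum_{j=0}^{q_{n}-1}\log|f(\zeta+j\beta)|
\]
on a complex neighborhood of $\T$, together with the quantitative almost-periodicity of the orbit $\{j\beta \bmod 1 : 0 \le j \le q_n - 1\}$ encoded in the continued fraction expansion. First I would extend $f$ to a holomorphic function on a strip $\{|\Im\zeta| < \delta\}$, where it has finitely many zeros (after shrinking $\delta$ so that no zeros lie on $|\Im\zeta| = \delta$); write $f = B \cdot g$ where $B$ is a finite Blaschke-type product accounting for the zeros and $g$ is non-vanishing, so that $\log|g|$ is harmonic. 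The contribution of $B$ to the sum is handled separately and directly, using that $\sum_j \log|b(\zeta + j\beta)|$ for a single zero factor is controlled by how close the orbit comes to the zero; this is exactly where the continued fraction denominators enter, since the three-distance theorem gives that consecutive orbit points $\{j\beta\}_{j=0}^{q_n-1}$ are spaced at least $\|q_{n-1}\beta\| \gtrsim 1/q_n$ apart, so at most $O(\log q_n)$ — in fact $O(1)$ after the subsequence choice — of the summands can be very negative, and each such summand is bounded below by $-\log q_n + O(1)$.

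The main term is then treated via subharmonicity. For the harmonic part $\log|g|$, the Birkhoff-type sum $\sum_{j=0}^{q_n - 1}\log|g(\theta + j\beta)|$ is compared to $q_n\int_\T \log|g|$; here I would invoke the standard bound for quasiperiodic Birkhoff sums of analytic (hence, on a strip, bounded-variation after passing to the boundary) functions along denominators $q_n$: since $\beta$ is approximated by $p_n/q_n$ with $|q_n\beta - p_n| < 1/q_{n+1}$, the orbit $\{\theta + j\beta\}_{j=0}^{q_n-1}$ is within $O(1/q_n)$ of the equally-spaced set $\{\theta + j p_n/q_n\}$, on which the Riemann sum of the analytic function $\log|g|$ equals the integral up to error controlled by $\|(\log|g|)'\|_\infty / q_n$, which is $O(1)$ after multiplying by the $q_n$ summands — wait, more carefully, the Riemann-sum error for a $C^1$ function over $q_n$ equally spaced points is $O(\mathrm{Lip}/q_n)$ per point times $q_n$ points, giving $O(\mathrm{Lip})$, a uniform constant. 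Combining, the harmonic part contributes a uniformly bounded error, and combining with the Blaschke part gives the upper bound $C(\beta)$, where passing to the subsequence $(q_{n_l})$ is used precisely to guarantee that the orbit does not come anomalously close to any zero of $f$ (one chooses $n_l$ so that $\|q_{n_l - 1}\beta\|$, equivalently the minimal gap, stays comparable to $1/q_{n_l}$, ruling out the rare denominators where two orbit points nearly collide).

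The step I expect to be the main obstacle is the rigorous control of the Blaschke/zero contribution and the justification that passing to a subsequence suffices to make it uniformly bounded rather than merely $O(\log q_n)$: one must show that for infinitely many $n$, the orbit $\{j\beta\}_{j=0}^{q_n - 1}$ has its closest approach to each zero $z_k$ of $f$ bounded below by a constant multiple of $1/q_n$, uniformly in $\theta$ in the sense that the \emph{sum} over $j$ of $-\log|\theta + j\beta - z_k|$ is $O(1)$. This is a Diophantine/counting estimate: using the near-equispacing of the orbit, $\sum_{j=0}^{q_n-1}-\log\|\theta + j\beta - z_k\|$ behaves like $\sum_{m=1}^{q_n}-\log(m/q_n) = \log(q_n!/q_n^{q_n}) + O(\log q_n) \sim -q_n + \ldots$ — so in fact the naive bound is \emph{negative} and large, which is favorable for an upper bound; the real content is ensuring no single term spoils this, which the subsequence and the irrationality of $\beta$ guarantee since $z_k \notin \{j\beta\}$ and the approach rate is controlled. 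I would organize the argument so that the favorable sign of the zero contribution is made explicit, reducing the proposition to the clean Riemann-sum estimate for the non-vanishing part plus a finite correction.
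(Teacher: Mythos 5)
The paper does not prove this proposition: it is quoted verbatim as Theorem~5.3 of the (then unpublished) preprint \cite{AJM} of Avila, Jitomirskaya, and Marx, where it is the central technical estimate and is tied to the Erd\H{o}s--Szekeres problem on $\liminf_n\prod_{k=1}^{n}|2\sin(\pi k\beta)|$. So the only thing to assess is whether your sketch would actually prove it, and it would not. Your reductions are fine as far as they go: the non-vanishing factor and the zeros of $f$ off the circle yield a function $\log|g|$ of bounded variation on $\T$, for which the Denjoy--Koksma inequality (your Riemann-sum step) gives $\bigl|\sum_{j<q_n}\log|g(\theta+j\beta)|-q_n\int_\T\log|g|\bigr|\le \mathrm{Var}(\log|g|)$ for \emph{every} $n$, uniformly in $\theta$; the statement then reduces to showing $\sum_{j=0}^{q_{n_l}-1}\log|2\sin(\pi(\theta+j\beta-\theta_0))|\le C$ uniformly in $\theta$ for each zero $\theta_0$ of $f$ lying on $\T$ (the matching integral being $0$). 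The gap is that this last estimate \emph{is} the theorem, and your treatment of it fails.

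Two concrete problems. First, your stated reason for passing to a subsequence --- choosing $n_l$ so that the minimal gap of $\{j\beta\}_{j<q_{n_l}}$ stays comparable to $1/q_{n_l}$ --- is vacuous: by the three-distance theorem the gaps are always $\|q_{n-1}\beta\|_{\R/\Z}$ or $\|q_{n-1}\beta\|_{\R/\Z}+\|q_{n}\beta\|_{\R/\Z}$, and $\tfrac{1}{2q_n}<\|q_{n-1}\beta\|_{\R/\Z}\le\tfrac{1}{q_n}$ holds for \emph{all} $n$; whatever the subsequence is needed for, it is not that. Second, your heuristic that ``the naive bound is negative and large, which is favorable'' rests on a sign error: $\sum_{m=1}^{q_n}\log(m/q_n)=\log(q_n!)-q_n\log q_n\approx -q_n$, so under your own equispacing heuristic $\sum_j\log|2\sin(\pi(\theta+j\beta-\theta_0))|\approx q_n\log(2\pi)-q_n$, which is large and \emph{positive} --- the naive estimate points the wrong way. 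For an upper bound, isolated very negative terms are harmless; the danger is the cumulative positive drift of the whole configuration. The correct starting point is the exact identity $\prod_{j<q_n}|2\sin(\pi(\theta+jp_n/q_n))|=|2\sin(\pi q_n\theta)|\le 2$, after which one must control the multiplicative error incurred by perturbing $jp_n/q_n$ to $j\beta$; the term-by-term bound you would get from near-equispacing yields a factor $\exp(Cq_n\log q_n/q_{n+1})$, which is unbounded along \emph{every} subsequence when $\beta$ is of bounded type, so genuine cancellation among the signed perturbations together with a nontrivially chosen subsequence is required. That argument --- the actual content of \cite[Theorem~5.3]{AJM} --- is absent from your proposal.
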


In particular, this proposition yields the following corollary immediately.

\begin{coro}\label{c:cos:upbound}
Let $\beta\notin\Q$ be as in Propostion~\ref{p:subharmonic:sum}. Then there is a $C_0>0$ and a subsequence $(q_{n_l})$ of $(q_n)$ such that
\begin{equation}\label{eq:subharmonic:prod1}
2^{q_{n_l}}\prod^{q_{n_l}-1}_{j=0}\left|\cos[2\pi(\theta+j\beta)]\right|\le C_0
\end{equation}
uniformly in $\theta\in\T$. Consequently, we obtain for any $r\in\Z_+$,
\begin{equation}\label{eq:subharmonic:prod2}
2^{rq_{n_l}}\prod^{rq_{n_l}-1}_{j=0}\left|\cos[2\pi(\theta+j\beta)]\right|\le C^r_0
\end{equation}
uniformly in $\theta\in\T$.
\end{coro}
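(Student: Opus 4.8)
The plan is to specialize Proposition~\ref{p:subharmonic:sum} to the analytic function $f(x) = 2\cos(2\pi x)$ and to use the classical fact that $\int_\T \log|2\cos(2\pi x)|\,dx = 0$, equivalently $\int_\T \log|\cos(2\pi x)|\,dx = -\log 2$ (recorded below as~\eqref{eq:logcosint}). With this choice of $f$, Proposition~\ref{p:subharmonic:sum} produces a constant $C = C(\beta) > 0$ and a subsequence $(q_{n_l})$ of $(q_n)$ such that
\[
\log\left( 2^{q_{n_l}} \prod_{j=0}^{q_{n_l}-1} \left|\cos[2\pi(\theta + j\beta)]\right| \right)
= \sum_{j=0}^{q_{n_l}-1} \log\left| 2\cos[2\pi(\theta + j\beta)]\right|
\le C
\]
uniformly in $\theta \in \T$, since the mean of $\log|f|$ vanishes. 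Exponentiating gives exactly~\eqref{eq:subharmonic:prod1} with $C_0 = e^C$.

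To obtain~\eqref{eq:subharmonic:prod2} I would split the product of $rq_{n_l}$ cosines into $r$ consecutive blocks of length $q_{n_l}$,
\[
2^{rq_{n_l}} \prod_{j=0}^{rq_{n_l}-1} \left|\cos[2\pi(\theta + j\beta)]\right|
= \prod_{i=0}^{r-1} \left( 2^{q_{n_l}} \prod_{j=0}^{q_{n_l}-1} \left|\cos\bigl[2\pi\bigl((\theta + i q_{n_l}\beta) + j\beta\bigr)\bigr]\right| \right),
\]
and apply~\eqref{eq:subharmonic:prod1} to each block with the shifted phase $\theta + i q_{n_l}\beta$ in place of $\theta$. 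Because the constant $C_0$ in~\eqref{eq:subharmonic:prod1} does not depend on the phase, every one of the $r$ factors on the right is at most $C_0$, and multiplying them yields $C_0^r$.

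I do not anticipate any real obstacle: the corollary is a one-line consequence of Proposition~\ref{p:subharmonic:sum} once the value of $\int_\T \log|\cos(2\pi x)|\,dx$ is in hand, and~\eqref{eq:subharmonic:prod2} is a routine block decomposition exploiting the phase-uniformity of the first estimate. The only slightly delicate point is that $\log|\cos(2\pi x)|$ has logarithmic singularities; this causes no trouble because Proposition~\ref{p:subharmonic:sum} is already stated for the subharmonic quantity $\log|f|$, which tolerates the value $-\infty$, and in any case if $\theta + j\beta \equiv 1/4$ or $3/4 \pmod 1$ for some $j$ in the relevant range then the product in~\eqref{eq:subharmonic:prod1} vanishes and the bound is trivially true.
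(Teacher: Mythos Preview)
Your proposal is correct and follows essentially the same route as the paper: apply Proposition~\ref{p:subharmonic:sum} to the cosine (the paper takes $f(\theta)=\cos(2\pi\theta)$ and uses $\int_\T\log|\cos(2\pi x)|\,dx=-\log 2$, you equivalently take $f(x)=2\cos(2\pi x)$ with mean zero), exponentiate to get~\eqref{eq:subharmonic:prod1} with $C_0=e^C$, and then obtain~\eqref{eq:subharmonic:prod2} by the same block decomposition into $r$ shifted products of length $q_{n_l}$. The only additional content in the paper's proof is an explicit derivation of the integral formula $\int_\T\log|\cos 2\pi(\theta+i\e)|\,d\theta = 2\pi|\e|-\log 2$ for all real $\e$, which is placed here because the general-$\e$ version is needed later in Section~\ref{sec:zle}; for the corollary itself only the $\e=0$ case is required, and you are right to treat it as classical.
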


\begin{proof}
First, we compute that for $\e \in \R$,
\begin{equation} \label{eq:logcosint}
\int_{\T}\log|\cos2\pi (\theta+i\e)| \, dx
=
2\pi|\e|-\log2.
\end{equation}
For the proof of the present corollary, we only need the case $\e=0$. However, we will need \eqref{eq:logcosint} for $\e\in\R$ in Section~\ref{sec:zle}. The argument follows \cite[pp.~160--161]{Ahlfors} and is given for the convenience of the reader. We first consider the case $\e\ge 0$. Let $\Gamma$ be the rectanglular contour with vertices $0$, $\pi$, $\pi+i\e$, and $i\e$, oriented counterclockwise. Let $\zeta=\theta+i\e$. It is clear that
$$
h(\zeta)
\eqdef
\log(1-e^{2i \zeta})
=
\log(-2ie^{i\zeta}\sin \zeta)
$$
is analytic on the closed region enclosed by $\Gamma$ except at $\zeta= 0, \pi$. However, by following small circular quadrants around $0$ and $\pi$ and letting the radii of these circles go to zero, we obtain
\[
\int_{\Gamma} \log(-2ie^{i\zeta}\sin \zeta) \, d\zeta
=
0
\]
by Cauchy's Theorem. Since the integrand has period $\pi$, the integrals over the vertical edges of $\Gamma$ cancel one another. The integral over the upper horizontal side tends to $0$ as $\e\rightarrow\infty$. Hence we first get $\int^\pi_0 \log(-2ie^{i\theta}\sin \theta) \, d\theta = 0$, which in turn implies that for all $\e\ge 0$,
$$
\int^{\pi+i\e}_{i\e} \log(-2ie^{i\zeta}\sin \zeta) \,d\zeta
=
\int^{\pi}_{0} \log[-2ie^{i(\theta+i\e)}\sin (\theta+i\e)] \, d\theta=0.
$$
Thus,
\[
\pi \log2
-\frac{\pi^2}{2}i
+\frac{\pi^2}{2}i
-\pi\e
+\int^\pi_0\log[\sin(x+i\e)] \, d\theta
=
0,
\]
which clearly implies
$$
\int^\pi_0\log[\sin(\theta+i\e)] \, d\theta
=
\pi\e-\pi\log2=\int^\pi_0\log|\sin(\theta+i\e)| \, d\theta,
$$
where the last equality follows from $\int^\pi_0\log|\sin(\theta+i\e)|d\theta=\Re \int^\pi_0\log[\sin(\theta+i\e)]\, d\theta$.

Then by a simple change of variables, we obtain the following for all $\e\ge 0$:
\begin{align*}
\int_\T\log|\cos(2\pi(\theta+i\e))| \, d\theta
&=
\int_\T\log|\sin(2\pi(\theta+i\e))| \, d\theta\\
&=
\frac1\pi\int^\pi_0\log |\sin(\theta+i2\pi\e)| \, d\theta\\
&=
2\pi\e-\log2.
\end{align*}
Since $\cos2\pi(\theta-i\e)=\overline{\cos2\pi(\theta+i\e)}$ for all $\theta,\e \in \R$, we obtain \eqref{eq:logcosint} for all $\e\in\R$.

Now let $f(\theta)=\cos(2\pi \theta)$, and choose $n_l$ as in Proposition~\ref{p:subharmonic:sum}. Using Proposition~\ref{p:subharmonic:sum} and the $\e = 0$ case of \eqref{eq:logcosint} we have
\begin{equation}\label{eq:subharmonic:sum2}
q_{n_l}\log 2 + \sum_{j=0}^{q_{n_l}-1}  \log|\cos(2\pi(\theta+j\beta))|
\le
C
\end{equation}
for all $\theta \in \T$. Let $C_0=e^{C}$. By exponentiating both sides of \eqref{eq:subharmonic:sum2}, we then obtain \eqref{eq:subharmonic:prod1}. Let $g(x)$ be the right-hand side of \eqref{eq:subharmonic:prod1}, and $g_r(\theta)$ be right-hand side of \eqref{eq:subharmonic:prod2}. By the uniformity of the estimate \eqref{eq:subharmonic:prod1} in $x\in\T$, we obtain
\[
g_r(\theta)=\prod^{r-1}_{j=0}g(\theta+jq_{n_l})
<
C_0^r
\]
uniformly in $\theta \in \T$, which is nothing other than \eqref{eq:subharmonic:prod2}.

\end{proof}

Finally we need the following Lemma.

\begin{lemma}\label{l:badtheta:zeromeasure}
Let $\mathbf k=\{k_n\}\subset\Z_+$ be an arbitrary sequence of positive integers such that $\lim_{n\rightarrow\infty}k_n=\infty$, and let $\|\cdot\|_{\R/\Z}$ denote the distance to the nearest integer. For each pair $l, m\in\Z_+$, set $E_{m,l}=\{\theta\in\T: \|k_l\theta\|_{\R/\Z}<\frac 1{4m}\}$. Then the following set has zero Lebesgue measure:
$$
E(\mathbf k):=\bigcap_{m\in\Z_+}\bigcup_{N\in\Z_+}\bigcap_{l\ge N}E_{m,l}.
$$
\end{lemma}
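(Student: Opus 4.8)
The plan is to identify $E(\mathbf{k})$ with the set of $\theta\in\T$ for which $\|k_l\theta\|_{\R/\Z}\to 0$ as $l\to\infty$, and then to squeeze its measure using the measures of the much simpler sets $E_{m,l}$. The crucial elementary observation is that each $E_{m,l}$ has a measure \emph{independent of $l$}: the map $\theta\mapsto k_l\theta$ is a $k_l$-to-one self-map of $\T$ under which the preimage of any Borel set $S$ has the same Lebesgue measure as $S$, and $\{y\in\T:\|y\|_{\R/\Z}<\tfrac{1}{4m}\}$ is a union of arcs of total length $\tfrac{1}{2m}$ (there is no wrap-around since $\tfrac{1}{4m}\le\tfrac14$ for $m\in\Z_+$). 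Hence $|E_{m,l}|=\tfrac{1}{2m}$ for every $l$ and every $m\in\Z_+$.

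Next I would fix $m\in\Z_+$ and bound the measure of $F_m:=\bigcup_{N\in\Z_+}\bigcap_{l\ge N}E_{m,l}$. The family $\{E_{m,l}\}_l$ is \emph{not} nested in $l$, so one cannot simply pass to a limit; instead one exploits monotonicity in $N$. For each fixed $N$, the intersection $\bigcap_{l\ge N}E_{m,l}$ is contained in the single set $E_{m,N}$, so it has measure at most $\tfrac{1}{2m}$; and the sets $\bigcap_{l\ge N}E_{m,l}$ increase with $N$, so continuity of Lebesgue measure from below gives $|F_m|\le\tfrac{1}{2m}$. (Equivalently, one may read $F_m$ as $\liminf_l E_{m,l}$ and apply Fatou's lemma to the indicators $\mathbf{1}_{E_{m,l}}$.)

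Finally, since $E(\mathbf{k})=\bigcap_{m\in\Z_+}F_m\subseteq F_m$ for every $m$, we get $|E(\mathbf{k})|\le\tfrac{1}{2m}$ for all $m\in\Z_+$, and letting $m\to\infty$ yields $|E(\mathbf{k})|=0$.

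There is essentially no serious obstacle here; the only point requiring a moment's care is precisely that the family $\{E_{m,l}\}_l$ is not monotone in $l$, which is resolved by the monotone-union argument in $N$ (or by Fatou). I would also remark that the hypothesis $k_n\to\infty$ plays no role in the conclusion, as $|E_{m,l}|$ does not depend on $k_l$; it is stated only because that is the regime in which the lemma will be applied in Section~\ref{sec:spectype}.
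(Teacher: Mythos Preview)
Your argument is correct and follows essentially the same route as the paper's proof: compute $|E_{m,l}|=\tfrac{1}{2m}$, use monotonicity in $N$ of $\bigcap_{l\ge N}E_{m,l}$ together with continuity from below to bound $|F_m|\le\tfrac{1}{2m}$, and then let $m\to\infty$. Your additional remarks (the Fatou alternative and the observation that $k_n\to\infty$ is not actually used) are correct but not present in the paper's version.
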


\begin{proof}
Let $\mu$ denote normalized Lebesgue measure on $\T$. First, it is clear that $\mu(E_{m,l})=\frac 1{2m}$ for each $m, l\in\Z_+$. Moreover, $E(\mathbf k)\subset \bigcup_{N\in\Z_+}\bigcap_{l\ge N}E_{m,l}$ for all $m,l$. Since
\[
\mu\left(\bigcup_{N\in\Z_+}\bigcap_{l\ge N}E_{m,l}\right)=\lim_{N\rightarrow\infty}\mu\left(\bigcap_{l\ge N}E_{m,l}\right)\le \frac 1{2m}
\]
for each $m\in\Z_+$, it follows that
\[
\mu(E(\mathbf k))\le \mu\left(\bigcup_{N\in\Z_+}\bigcap_{l\ge N}E_{m,l}\right)\le \frac 1{2m}.
\]
Since this holds for all $m \in \Z_+$, $\mu (E(\mathbf k))=0$.
\end{proof}

\begin{defi}

Given $\beta \in \T \setminus \Q$, let $q_{n_l}$ be as in Proposition~\ref{p:subharmonic:sum}, and denote $\mathbf k = \{4q_{n_l}\}_{l =1}^\infty$ and $E = E(\mathbf k)$ as in Lemma~\ref{l:badtheta:zeromeasure}. Define the set $\mathcal S_\beta$ by
\[
\mathcal S_{\beta}
=
\left\{\frac l2 + \frac{k\beta}{2}:\ k, l\in\Z\right\}\bigcup E(\mathbf k).
\]

\end{defi}
Of course, $\mu(S_\beta)=0$ since it is the union of a zero-measure set and a countable set. We are now in a position to prove Theorem~\ref{t:uamospec}.\ref{t:contspec}. Clearly, this will follow from the following theorem:

\begin{theorem}
If $\beta$ is irrational and $\theta \notin \mathcal S_\beta$, then $\sigma_{\pp}(U_{\beta,\theta}) = \emptyset$.
\end{theorem}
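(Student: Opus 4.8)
The plan is to exclude eigenvalues by combining the Aubry duality of Theorem~\ref{t:aubry} with the subexponential control over the cosine products supplied by Corollary~\ref{c:cos:upbound}. Suppose, for contradiction, that $z \in \partial\D$ is an eigenvalue of $U_{\beta,\theta}$ with eigenfunction $\psi \in \Hi \setminus \{0\}$, so that $\psi_\uparrow, \psi_\downarrow \in \ell^2(\Z)$. Since $\psi \in \ell^2$, Parseval shows $\widecheck\psi_\uparrow, \widecheck\psi_\downarrow \in L^2(\T)$, and the dual state $\varphi^\xi$ of Theorem~\ref{t:aubry} is a nonzero solution of $U^\top_{\beta,\xi}\varphi^\xi = z\varphi^\xi$ for a.e.\ $\xi$; moreover, integrating over $\xi$ the $\ell^2(\Z)$-norms of the fibers, one sees that for a.e.\ $\xi$ the sequence $\varphi^\xi$ has finite weighted $\ell^2$-type norm (its coordinates sampled along the orbit $n\beta+\xi$ are square-summable against a suitable weight). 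The first step, then, is to choose $\xi \notin \mathcal S_\beta$ so that simultaneously (i) $\varphi^\xi$ is a genuine nonzero generalized eigenfunction of $U^\top_{\beta,\xi}$, and (ii) the orbit $\{n\beta + \xi\}$ avoids the poles $\{1/4, 3/4\}$ of $M^z$ and enjoys the good Diophantine behavior encoded by the set $E(\mathbf k)$ being avoided.

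Next I would transfer the problem to the cocycle $M^z$. Writing $\Phi^\xi(n) = (\varphi^\xi_{\uparrow,n}, \varphi^\xi_{\downarrow,n-1})^\top$, the eigenvalue equation becomes $\Phi^\xi(n+1) = M^z(n\beta+\xi)\Phi^\xi(n)$, hence $\Phi^\xi(n) = M^z_n(\xi)\Phi^\xi(0)$ for the iterates $M^z_n$. The key point is that $\varphi^\xi$ is built from the $L^2$ functions $\widecheck\psi_\uparrow, \widecheck\psi_\downarrow$ evaluated at $n\beta+\xi$; so along the subsequence $q_{n_l}$ the translates $\{\varphi^\xi_{s, q_{n_l}}\}$ return close to $\{\varphi^\xi_{s,0}\}$ in an averaged sense, and in particular $\Phi^\xi$ does not decay and does not blow up along this subsequence (using that the phases chosen avoid $E(\mathbf k)$, so $\|4q_{n_l}\xi\|_{\R/\Z}$ stays bounded below, which controls how far the sampled points $q_{n_l}\beta + \xi$ drift). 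On the other hand, Corollary~\ref{c:cos:upbound} gives $2^{rq_{n_l}}\prod_{j=0}^{rq_{n_l}-1}|\cos(2\pi(\xi+j\beta))| \le C_0^r$, so $\|N^z_{rq_{n_l}}(\xi)\| = 2^{rq_{n_l}}\prod|\cos|\cdot\|M^z_{rq_{n_l}}(\xi)\| \le C_0^r \|M^z_{rq_{n_l}}(\xi)\|$, which lets one pass the growth information between the two cocycles.

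The contradiction is then extracted from the structure of $U^\top = C'J$: since $C'$ is a direct sum of $2\times 2$ unitaries, as observed in the proof of Corollary~\ref{c:noL1solution}, every solution of $U^\top_{\beta,\xi} v = zv$ that fails to decay at $+\infty$ must in fact blow up there (and likewise at $-\infty$), because the two-dimensional solution space has a Wronskian of constant modulus. So $\varphi^\xi$ either decays or blows up at each end. But the $L^2$-averaging forces $\varphi^\xi$ to be genuinely square-summable (along the good subsequence at least, and in fact one can upgrade this), so it cannot blow up; decay at both ends would make $z$ an honest $\ell^2$-eigenvalue of $U^\top_{\beta,\xi}$, and Aubry duality applied once more (dualizing back, or invoking Corollary~\ref{c:noL1solution}) yields that $\psi$ itself would have to be $\ell^1$ — contradicting that corollary — or one directly counts that $U^\top_{\beta,\xi}$ cannot have $\ell^2$ eigenvalues for a.e.\ $\xi$ by the standard argument that eigenvalues of the direct integral form a measure-zero set of fibers. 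Combining these, no such $\psi$ exists, so $\sigma_{\pp}(U_{\beta,\theta}) = \emptyset$ for $\theta \notin \mathcal S_\beta$.

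The main obstacle I anticipate is step two: making rigorous the claim that the dual state $\varphi^\xi$ "neither decays nor blows up" along the subsequence $q_{n_l}$. This is exactly where the quantitative input of Corollary~\ref{c:cos:upbound} and the Diophantine set $E(\mathbf k)$ must be used in tandem — one needs that the near-periodicity of the translation by $q_{n_l}\beta$ (controlled by avoiding $E(\mathbf k)$, so $q_{n_l}\xi$ does not get too close to a half-integer) prevents the $L^2$ mass of $\widecheck\psi_s$ from being concentrated away from the sampled orbit points, while simultaneously the cosine product bound prevents the renormalization from $M^z$ to $N^z$ from destroying this. Getting the estimates to interlock cleanly — and in particular handling the poles of $M^z$, where $\det M^z$ degenerates — is the delicate part; the rest is bookkeeping with the $JC'$ factorization and Aubry duality.
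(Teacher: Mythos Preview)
Your proposal has a genuine gap, and it does not mirror the paper's argument. The central misunderstanding is the role of the hypothesis $\theta \notin \mathcal S_\beta$: this is a condition on the \emph{original} phase $\theta$, not on the dual variable $\xi$. In the duality of Theorem~\ref{t:aubry}, $\theta$ survives as the rotation parameter of the conjugacy---concretely, as the diagonal matrix $D_\theta = \mathrm{diag}(e^{2\pi i\theta}, e^{-2\pi i\theta})$ in the relation $M^z(x)B(x)=B(x+\beta)D_\theta$---and the two pieces of $\mathcal S_\beta$ are used precisely against $\theta$: the arithmetic part $\{\tfrac l2+\tfrac{k\beta}{2}\}$ rules out a resonance that would force $\det B\equiv 0$, and the set $E(\mathbf k)$ guarantees that $2-2\cos(2\pi k\theta)$ stays bounded below along a subsequence of $(4q_{n_l})$. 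Your plan to ``choose $\xi \notin \mathcal S_\beta$'' and to read $\theta\notin E(\mathbf k)$ as controlling $\|4q_{n_l}\xi\|_{\R/\Z}$ never engages the actual hypothesis.

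The second gap is that you are effectively rerunning the proof of Corollary~\ref{c:noL1solution} with $\psi\in\ell^2$ in place of $\ell^1$. That corollary works because $\widecheck\psi_s$ is \emph{continuous}, so $\varphi^\xi$ is bounded and does not vanish at infinity; with $\psi$ merely in $\ell^2$, the samples $\widecheck\psi_s(n\beta+\xi)$ are pointwise values of an $L^2$ function and carry no such information, so the assertion that $\varphi^\xi$ ``neither decays nor blows up'' along $q_{n_l}$ has no content. The decay/blow-up dichotomy you invoke for solutions of $U^\top_{\beta,\xi}v=zv$ also fails in general: it holds only once one already knows some solution decays, which for generic $\xi$ you do not. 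The paper avoids all pointwise reasoning: it works with the $L^2$ semi-conjugacy on all of $\T$, shows $\|\tr M_{4q_n}-2\cos(8\pi q_n\theta)\|_{L^1}\to 0$ via Cauchy--Schwarz and translation continuity in $L^2$, passes to the analytic cocycle $N^z$ using Corollary~\ref{c:cos:upbound}, and then extracts the contradiction from the \emph{extremal Fourier coefficient} of $\tr N_k$. That trace/Fourier-coefficient mechanism---which is exactly where $\theta\notin E(\mathbf k)$ enters---is the missing idea in your outline.
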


\begin{proof}
Singularity of the cocycle map $N^z$ and the different form of self-duality complicate matters considerably and force us to do substantial additional work, but our proof follows the outline of an argument of Avila \cite{AvPSWCC}. Let $\beta$ and $\theta$ be as stated,  abbreviate $U = U_{\beta, \theta}$, and adopt the same notation as the proof of Theorem~\ref{t:aubry}. Suppose for the sake of establishing a contradiction that $\sigma_{\pp}(U_{\beta,\theta}) \neq \emptyset$, and let $0 \neq \psi \in \Hi$ and $z \in \partial \D$ be such that $U \psi = z \psi$. As before, we view $\psi \in \Hi$ as a pair of $\ell^2$ sequences $\psi_\uparrow = (\psi_{\uparrow,n})_{n \in \Z}$ and $\psi_\downarrow = (\psi_{\downarrow,n})_{n \in \Z}$. Let
\[
\widecheck\psi_\uparrow(x)=\sum_{n \in \Z} \psi_{\uparrow,n} e^{2\pi inx},
\quad
\widecheck\psi_\downarrow(x)=\sum_{n \in \Z} \psi_{\downarrow,n} e^{2\pi inx}
\]
be the inverse Fourier Transforms of $\psi_\uparrow$ and $\psi_\downarrow$, respectively, and define $w_\uparrow$ and $w_\downarrow$ as in Section~\ref{sec:aubry}, i.e.,
\[
w_\uparrow(x)
=
\widecheck \psi_\uparrow(x) + i \widecheck \psi_\downarrow(x),
\quad
w_\downarrow(x)
=
i\widecheck \psi_\uparrow(x) + \widecheck \psi_\downarrow(x).
\]
Since $\psi_s \in \ell^2(\Z)$ for $s \in \{\uparrow, \downarrow\}$, it follows that $\widecheck\psi_s$ and $w_s$ are in $L^2(\T)$. From the proof of Theorem~\ref{t:aubry}, we recall \eqref{eq:dual:wup} and \eqref{eq:dual:wdown}, which read:
 \begin{align*}
z w_\uparrow(x)
= &
\sin(2 \pi x)e^{-2\pi i \theta} w_\downarrow(x - \beta) + \cos(2 \pi x) e^{ 2\pi i \theta} w_\uparrow(x + \beta), \\
z w_\downarrow(x)
= &
\cos(2 \pi x)e^{-2\pi i \theta} w_\downarrow(x - \beta) - \sin(2 \pi x) e^{ 2\pi i \theta} w_\uparrow(x + \beta)
\end{align*}
for a.e.\ $x \in \T$. We may rephrase these identities to see that the cocycle which propagates solution data is semi-conjugate to a diagonal cocycle. More precisely, we may rearrange to \eqref{eq:dual:wup} and \eqref{eq:dual:wdown} to see that
\begin{equation} \label{eq:w:cocycleconj}
M^z(x)
\begin{pmatrix}
e^{-2\pi i \theta} w_\downarrow(x-\beta) \\
w_\uparrow(x)
\end{pmatrix}
=
\begin{pmatrix}
w_\downarrow(x) \\
e^{2\pi i \theta} w_\uparrow(x+\beta)
\end{pmatrix}
\end{equation}
for a.e.\ $x \in \T$, where $M^z(x)$ is as defined in \eqref{eq:tmdef}. Next, since $z \in \partial \D$ (by assumption), we have $\bar z = z^{-1}$. Using this, \eqref{eq:dual:wup}, and \eqref{eq:dual:wdown}, we also have
\begin{equation} \label{eq:w:cocycleconj2}
M^z(x)
\begin{pmatrix}
\bar{w}_\uparrow(x) \\
e^{2\pi i \theta} \bar{w}_\downarrow(x - \beta)
\end{pmatrix}
=
\begin{pmatrix}
e^{-2\pi i \theta} \bar{w}_\uparrow(x+\beta) \\
\bar{w}_\downarrow(x)
\end{pmatrix}
\end{equation}
for a.e.\ $x$. Combining \eqref{eq:w:cocycleconj} and \eqref{eq:w:cocycleconj2}, we obtain
\begin{equation}\label{semi-conjugacy}
M^z(x) B(x)
=
B(x + \beta) D_\theta
\text{ for a.e. } x \in \T,
\end{equation}
where
\[
B(x)
=
\begin{pmatrix}
e^{-2\pi i \theta} w_\downarrow(x-\beta) & \bar{w}_\uparrow(x) \\
w_\uparrow(x) & e^{2\pi i \theta} \bar{w}_\downarrow(x-\beta)
\end{pmatrix},
\quad
D_\theta
=
\begin{pmatrix}
e^{2\pi i \theta} & 0 \\
0 & e^{-2\pi i \theta}
\end{pmatrix}.
\]

By taking the determinant of both sides, \eqref{semi-conjugacy} implies that $\det B(x+\beta) = \det B(x)$ for a.e.\ $x \in \T$. Since $\det(B(\cdot)) \in L^1(\T)$, ergodicity of $x\mapsto x+\beta$ on $\T$ implies that $\det B(x)$ is constant almost everywhere; we denote this constant value by $\det(B)$. Equation \eqref{semi-conjugacy} implies that the set $\{x: B(x) = 0\}$ is invariant under $x\mapsto x+\beta$ on $\T$ as well. Thus, we must have $\|B(x)\|>0$ for a.e. $x\in\T$. For, if not, then $\|B(x)\|=0$ for a.e.\ $x\in\T$, which implies $\psi=0$, contrary to our assumption thereupon.

Let us show how the assumption $\theta \notin \mathcal S_\beta$ yields $\det B\neq 0$. Indeed, $\det B(x)=0$ gives
$$
|w_\downarrow(x-\beta)|^2 - |w_\uparrow(x)|^2=0  \text{ for a.e.\ } x\in \T.
$$
Hence, there exists a $\phi(x)$ with $|\phi(x)|=1$ for all $x\in\T$ such that
$$
w_\uparrow(x)
=
\phi(x)e^{2 \pi i \theta} \bar w_\downarrow(x - \beta) \text{ for a.e.\ } x\in\T.
$$
If we write $B(x)=(\vec u(x), \vec v(x))$ where $\vec u(x)$ and $\vec v(x)$ are column vectors, then one readily checks that $\vec u(x)=\phi(x) \vec v(x)$. Hence, \eqref{semi-conjugacy} implies
\[
M(x)\vec u(x)
=
e^{2\pi i \theta} \vec u(x+\beta),
\quad
M(x) \phi(x)\vec u(x)
=
e^{-2\pi i \theta} \phi(x+\beta) \vec u(x+\beta)
\]
for a.e.\ $x$. Since $z$ is fixed, we have dropped it from the notation in the equation above and in the remainder of the argument. Clearly, the above equalities imply
$$
\phi(x+\beta)
=
e^{4\pi i \theta} \phi(x)
\text{ for a.e. } x\in\T.
$$
Now, if we consider the Fourier Transform of $\phi$, defined by \eqref{eq:ftdef}, we have $\phi(x) = \sum_k \widehat\phi_k e^{2\pi i k x}$ (a.e.\ $x$). Thus, we obtain
$$
e^{2\pi ik\beta}\widehat \phi_k
=
e^{4\pi i\theta} \widehat \phi_k \text{ for all } k\in\Z.
$$
$\theta\notin \mathcal S_\beta$ implies $e^{2\pi ik\beta}\neq e^{4\pi i\theta}$ for all $k\in\Z$. Hence $\widehat \phi_k=0$ for all $k$, which forces $\phi(x)=0$ for a.e.\ $x\in\T$, contrary to our assumption that $|\phi(x)|=1$ for a.e.\ $x\in\T$.

Thus, the semi-conjugacy \eqref{semi-conjugacy} becomes a $L^2$ conjugacy between $M(x)$ and the constant diagonal matrix $D_\theta$, to wit:
\begin{equation}\label{conjugacy}
M(x)
=
B(x+\beta)D_\theta B(x)^{-1},
\quad
\text{a.e.\ } x \in \T.
\end{equation}
Now, with $M_k(x)=M(x+(k-1)\beta)\cdots M(x)$ for $k \in \Z_+$ and $x \in \T$, \eqref{conjugacy} gives
$$
M_k(x)
=
B(x+k\beta)D^kB(x)^{-1}
\quad
\text{for all } k \in \Z_+ \text{ and a.e.\ } x \in \T.
$$
Let $\Phi_k(x)=\tr (M_k(x))-2\cos2\pi k\theta$, where $\tr( A)$ denotes the trace of the matrix $A$. By cyclicity of the trace, we obtain
$$
\Phi_k(x)
=
\tr \left( B(x+k\beta)D^kB(x)^{-1} \right) - \tr \left( B(x)D^kB(x)^{-1} \right),
\quad
\forall k \in \Z_+, \text{ a.e.\ } x \in \T.
$$
Since $\|A^{-1}\|=\frac{\|A\|}{\det(A)}$ for any invertible $2 \times 2$ matrix $A$, and $|\tr(E)|\le 2\|E\|$ for any $2\times 2$ matrix $E$, we obtain
\begin{align*}
|\Phi_k(x)|
&=
\left|\tr\left(B(x+k\beta)-B(x))D^kB(x)^{-1} \right)\right|\\
&\le
2 \| (B(x+k\beta)-B(x))D^kB(x)^{-1} \| \\
& \le
\frac{2}{|\det B|}\|B(x+k\beta)-B(x)\|\cdot \|B(x)\|.
\end{align*}
Since $\psi_s \in \ell^2(\Z)$ for $s \in \{\uparrow,\downarrow\}$, it is also clear that $\|B(x)\|\in L^2(\T)$. Thus, by the Cauchy-Schwarz inequality, we obtain
\begin{equation}\label{trace:estimate}
\|\Phi_k\|^2_{L^1}
\le
\frac{4}{|\det(B)|^2}\int_\T\|B(x+k\beta)-B(x)\|^2 \, dx
\cdot \int_\T\|B(x)\|^2 \, dx.
\end{equation}
Note that for any $g\in L^2(\T)$, by Parseval's identity, it holds that
\begin{equation}\label{L2norm:estimate}
\lim_{\e\rightarrow 0}\|g(\cdot+\e)-g\|^2_{L^2}
=
\lim_{\e\rightarrow 0}\sum_{n\in\Z}|\hat g_n|^2|e^{-2\pi\e ni}-1|^2=0.
\end{equation}
Note also for any $2\times 2$ matrix $E=\left(\begin{smallmatrix}a&b\\c&d\end{smallmatrix}\right)$, it holds that $\|E\|\le \|E\|_{\mathrm{HS}}$, where
$$
\|E\|_{\mathrm{HS}}
=
\sqrt{|a|^2+|b|^2+|c|^2+|d|^2}
$$
is the Hilbert-Schmidt norm.

Now by taking any sequence $k_n \in \Z$ such that $\lim_{n\to\infty}\|k_n\beta\|_{\R/\Z}=0$, passing to the Hilbert-Schmidt norm, and using \eqref{L2norm:estimate}, one readily checks that
\begin{equation}\label{eq:L1:liminf0}
\lim_{n\rightarrow\infty}\int_\T\|B(x+k_n\beta)-B(x)\|^2 \, dx
=
0.
\end{equation}
In particular, one may take $k_n=4q_n$, where $q_n$ is the denominator of the continued fraction approximants of $\beta$. Hence, \eqref{trace:estimate} and \eqref{eq:L1:liminf0} imply that
\begin{equation} \label{eq:traceL1decay}
\lim_{n \to \infty} \| \Phi_{4q_n} \|_{L^1}
=
0.
\end{equation}
Now, we will use $N$, defined by \eqref{eq:tmNdef}. Evidently, we can rewrite $N$ as follows:
\begin{align*}\label{trace:extremalcoefficient}
N(x)
& =
\begin{pmatrix}-2iz^{-1} & e^{2\pi i x}-e^{-2\pi i x}\\ e^{2\pi i x}-e^{-2\pi i x} & -2iz
\end{pmatrix},\quad
x \in \T, \; z \in \C \setminus \{0\}..
\end{align*}
A direct computation shows that
\begin{equation}\label{trouble}
\Phi_k(x)=\tr M_k(x)-2\cos(2\pi k\theta)
=
\frac{\tr N_k(x)}{(-2i)^k}\prod^{k-1}_{j=0}\sec[2\pi (x+j\beta)]-2\cos(2\pi k\theta).
\end{equation}
Let
\[
f_k(x)
=
(-2i)^k\prod^{k-1}_{j=0}\cos[2\pi (x+j\beta)],
\quad
k \in \Z_+, \; x \in \T.
\]
Then,  \eqref{trouble} gives
\begin{equation}\label{trouble2}
f_k(x)\Phi_k(x)=\tr N_k(x)-2\cos(2\pi k\theta)f_k(x),
\text{ for all } k \in \Z_+, \; x \in \T.
\end{equation}
First, by a straightforward computation, we notice that $\tr (N_k(x))$ is a trigonometric polynomial of the form
$$
\tr (N_k(x))
=
\sum^{k}_{j=-k}c_n e^{2\pi ijx}.
$$
For $k \in 2\Z_+$, one readily checks that the extremal Fourier coefficient is given by
$$
c_k=2e^{\pi ik(k-1)\beta}.
$$
It is clear that $f_k(x)$ is trigonometric polynomial of degree $k$ as well. Moreover, the extremal Fourier coefficient of $f_k(x)$ is $e^{\pi ik(k-1)\beta}$ whenever $k\in 4\Z_+$. Hence, for any $k \in 4\Z_+$, we obtain for the right hand side of \eqref{trouble2}:
\begin{align*}
\|\tr N_k - 2\cos(2\pi k\theta)f_k\|_{L^1}
& \ge
\left|\int_\T\left(\tr N_k(x)-2\cos(2\pi k\theta)f_k(x)\right)e^{-2\pi ikx} \, dx\right|\\
&=
\left|2e^{\pi ik(k-1)\beta}- 2\cos(2\pi k\theta)e^{\pi ik(k-1)\beta}\right|\\
&=
2-2\cos(2\pi k\theta).
\end{align*}
Now, since $\theta\not\in E(\mathbf k)$ where $\mathbf k=\{4q_{n_l}\}^\infty_{l=1}$, we may choose $m\in\Z_+$ and a subsequence $(k_s)_{s\in\Z_+}$ of $(4q_{n_l})$ such that
$$
\left\|k_s\theta\right\|_{\R/\Z}>\frac1{4m}.
$$
Let $c = 2-2\cos(\frac1{4m})>0$. Then for each $s\in\Z_+$, it holds that
\begin{equation}\label{right:estimate}
\| f_{k_s} \Phi_{k_s} \|_{L^1}
=
\|\tr N_{k_s}-2\cos(2\pi k_s\theta)f_{k_s}\|_{L^1}
\ge
2-2\cos(2\pi k_s\theta)
>
c
>
0.
\end{equation}

On the other hand, since $(k_s)$ is a subsequence of $(4q_{n_l})$, \eqref{eq:traceL1decay}, and \eqref{eq:subharmonic:prod2} with $r=4$ imply the following for the right-hand side of \eqref{trouble2}:
\begin{equation}\label{left:estimate}
\lim_{s\rightarrow\infty}\|f_{k_s}\Phi_{k_s}\|_{L^1}
\le
C_0^4\lim_{s\rightarrow\infty}\|\Phi_{k_s}\|_{L^1}
=
0.
\end{equation}
So \eqref{right:estimate} and \eqref{left:estimate} together implies $0 < c \leq 0$, a contradiction.
\end{proof}

\section{Zero Lyapunov Exponents on and Zero Measure of the Spectrum} \label{sec:zle}

In this section, we prove Theorem~\ref{t.criticalenergy}. As we mentioned in Section~\ref{sec:qw}, the only models known to exhibit critical energies are the critical AMO \cite{AGlobalActa} and the Extended Harper's operator (for certain parameter values) \cite{JM2}. Note that zero Lyapunov exponents on the spectrum for the critical AMO was first proved by Bourgain and Jitomirskaya in \cite{BJ}. Our proof of Theorem~\ref{t.criticalenergy} follows the outline of the argument in \cite{AGlobalActa}. However, our cocycle map is quite different from the Schr\"odinger cocycle map as defined in \eqref{schrodinger:cocycle}. Moreover, as we have seen in Section~\ref{sec:spectype}, our cocycle map is merely meromorphic. These new difficulties again force us to do substantial additional work as in Section~\ref{sec:spectype}. We will start by defining the Lyapunov exponents.

Let $\mathrm{M}_2(\C)$ denote the set of all $2\times 2$ matrices and let $A:\T\rightarrow \mathrm{M}_2(\C)$ be a measurable function satisfying the integrability condition:
\begin{equation}\label{integrability:cocycle}
\int_\T\log\|A(\theta)\| \, d\theta
<
\infty.
\end{equation}
Then we define a skew product of $T:x \mapsto x + \beta$ and $A$ as follows:
\begin{equation}\label{dynamics:cocycle}
(\beta,A):\T\times\C^2\rightarrow \T\times\C^2,\
(\theta,\vec w) \mapsto (\theta+\beta,A(\theta)\vec w).
\end{equation}
$A$ is the so-called \emph{cocycle map}. The $n$th iteration of dynamics will be denoted by $(\beta, A)^n=(n\beta, A_n)$. Thus,
\begin{equation}\label{iteration:cocycle}
A_n(\theta)
=
\begin{cases}
A(\theta+(n-1)\beta) \cdots A(\theta+\beta) A(\theta), &n\ge 1;\\
I_2, &n=0;
\end{cases}
\end{equation}
Note that $A_n(x)=[A_{-n}(\theta+n\beta)]^{-1}$ for $n\le -1$ if all matrices involved are invertible. One of the most important objects in understanding dynamics of $(\beta, A)$ is the \emph{Lyapunov exponent}, which is denoted by
$L(\beta, A)$ and given by
\begin{equation}\label{eq:LE}
L(\beta,A)
=
\lim\limits_{n\rightarrow\infty}\frac{1}{n}\int_{\T}\log\|A_n(\theta)\| \, d\theta
=
\inf_n\frac{1}{n}\int_{\T}\log\|A_n(\theta)\| \, d\theta.
\end{equation}
The limit exists and is equal to the infimum since $\{\int_{\T}\log\|A_n(x)\|  \, dx\}_{n\geq1}$ is a subadditive sequence. If $\beta$ is irrational, then by Kingman's subadditive ergodic theorem, it also holds that
$$
L(\beta, A)
=
\lim\limits_{n\rightarrow\infty}\frac{1}{n}\log\|A_n(\theta)\|,
$$
for Lebesgue almost every $\theta\in\T$.

Back to our setting, as we have seen from previous sections, we will primarily be interested in the following one parameter family of cocycle maps:
\[
M^z: \T\rightarrow \mathrm{M}_2(\C),\
M^z(\theta)
=
\sec(2\pi \theta)
\begin{pmatrix}
z^{-1} & -\sin(2\pi \theta) \\
- \sin(2\pi \theta) & z
\end{pmatrix},\ z\in\partial\D.
\]
The iteration of this cocycle map generates the transfer matrices of our unitary operators $U_{\beta,\theta}$.  To avoid poles $M^z(\theta)$, we consider the renormalized cocycle maps:
\[
N^z(\theta)
=
-2i \cos(2\pi \theta)
M^z(\theta)
=
\begin{pmatrix}
-2iz^{-1} & 2i\sin(2\pi \theta) \\
2i\sin(2\pi \theta) & - 2i z
\end{pmatrix},\ z\in\partial\D,
\]
which is analytic in $\theta$. Let $L(\beta,z)$ be the Lyapunov exponent of the dynamical system $(\beta, M^z)$. Using the formula \eqref{eq:logcosint} for $\e=0$, it is easy to see that
\[
L(\beta,z)
=
L(\beta,M^z)
=
L(\beta,N^z).
\]
for all $z$.  As an immediate corollary of Theorem~\ref{t:uamospec}.\ref{t:singspec}, we may note that $L(\beta,z)$ is positive for Lebesgue almost-every $z \in \partial \D$.

\begin{coro}\label{c:aepositiveLE}
For irrational $\beta$, the Lyapunov exponent $L(\beta,z)$ corresponding to the family $(U_{\beta,\theta})_{\theta \in \T}$ is positive Lebesgue-almost-everywhere on $\partial \D$.
\end{coro}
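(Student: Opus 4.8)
The plan is to read this off from Theorem~\ref{t:uamospec}.\ref{t:singspec} by means of Kotani theory for ergodic CMV matrices. Recall from the remark following \eqref{eq:tmNdef} that $U_{\beta,\theta} = \E_{\theta-\beta}$, so that $(U_{\beta,\theta})_{\theta\in\T}$ is an ergodic family of (extended) CMV matrices over the base $(\T,\,x\mapsto x+\beta,\,\mathrm{Leb})$, with transfer-matrix cocycle $(\beta, M^z)$ (equivalently $(\beta,N^z)$, the two having equal Lyapunov exponent by \eqref{eq:logcosint} with $\e=0$). The input to invoke is the Ishii--Pastur--Kotani theorem in this setting, which identifies the almost-sure absolutely continuous spectrum with the essential closure of the set
\[
\mathcal Z
=
\{ z \in \partial\D : L(\beta,z) = 0 \}
\]
of spectral parameters at which the Lyapunov exponent vanishes; the Weyl--Titchmarsh and reflectionless machinery underlying this goes back to \cite{GZ06}, with the dynamically defined version available through the nonsingular CMV theory of \cite{GJ96, DFLY2} (cf.\ also \cite{S1,S2}).

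First I would check that the hypotheses of that machinery are met. The only potentially delicate point is the integrability condition \eqref{integrability:cocycle} for the singular cocycle $M^z$, and this is harmless: $\|M^z(x)\| \le 2\,|\sec(2\pi x)|$ with $\log|\sec(2\pi x)|\in L^1(\T)$ by \eqref{eq:logcosint}, so $\int_\T \log\|M^z(x)\|\,dx < \infty$; alternatively one works throughout with the analytic renormalization $N^z$, whose Lyapunov exponent agrees with that of $M^z$. Conjugating by a suitable diagonal unitary (as after \eqref{e.ecmvoddzero}) puts $U_{\beta,\theta}$ into the standard extended CMV normalization, so the ergodic CMV Kotani theory applies directly; one may also simply appeal to the general singular framework developed in Section~\ref{sec:johnson-marx}.

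Granting this, the argument is immediate: Theorem~\ref{t:uamospec}.\ref{t:singspec} gives $\sigma_{\ac}(U_{\beta,\theta}) = \emptyset$, hence the essential closure of $\mathcal Z$ is empty, hence $\mathcal Z$ itself has zero Lebesgue measure in $\partial\D$. Therefore $L(\beta,z) > 0$ for Lebesgue-almost-every $z\in\partial\D$, as claimed.

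The main obstacle is not the logic but securing the correct black box: one needs the \emph{deep} half of Kotani theory --- that positive Lebesgue measure of the vanishing set of $L$ forces nonempty a.c.\ spectrum --- since the elementary Ishii--Pastur inclusion runs the wrong way for this purpose. Its standard proof (boundary behaviour of the Carath\'eodory $m$-functions, constancy of the absolutely continuous part of the spectral measures via ergodicity, reflectionlessness) uses only the logarithmic integrability above and the finite-range structure of $U_{\beta,\theta}$, so I expect no genuine difficulty; the write-up should nonetheless pin down precisely which reference, or which portion of Section~\ref{sec:johnson-marx}, delivers this in the singular CMV setting.
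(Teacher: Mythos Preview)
Your proposal is correct and follows exactly the same route as the paper: the paper's proof is the single sentence ``This is immediate from Theorem~\ref{t:uamospec}.\ref{t:singspec} and Kotani Theory for CMV matrices \cite[Section~10.11]{S2}.'' You have simply unpacked this citation, correctly identifying that it is the Kotani (not the Ishii--Pastur) direction that is needed, and your remarks on the integrability condition and the singular normalization are accurate but not required by the paper, which is content to invoke \cite{S2} as a black box.
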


\begin{proof}
This is immediate from Theorem~\ref{t:uamospec}.\ref{t:singspec} and Kotani Theory for CMV matrices \cite[Section~10.11]{S2}.
\end{proof}

Next, we define the notion of \textit{acceleration} which was introduced by Avila \cite{AGlobalActa}. Let $A\in C^\omega(\T,\mathrm{M}_2(\C))$. Then there exists a $\delta>0$ such that we may extend $A$ to an analytic map on the complex domain $\{\theta+i\varepsilon: \theta\in\T, |\varepsilon|<\delta\}$. Let $A_\varepsilon$ denote the complexified cocycle map $A(\cdot+i\varepsilon)$. Then, the acceleration is defined as
$$
\omega(\beta,A)
=
\lim_{\varepsilon\rightarrow 0+}\frac{1}{2\pi\varepsilon}\left[L(\beta,A_\varepsilon)-L(\beta,A)\right].
$$

The existence of the limit follows from fact that $L(\beta,A_\e)$ is convex in $\e$ which follows from the subharmonicity of $L$ in $\theta+i\e$, see e.g. \cite{AGlobalActa}. The following theorem states that acceleration is quantized.

\begin{theorem}\label{t.quantization_acceleration}
For any irrational frequency $\beta$ and any $A\in C^\omega(\T,\mathrm{M}_2(\C))$, $\omega(\beta,A) \in \frac{1}{2} \Z$. If $A\in C^\omega(\T,\mathrm{SL}(2,\C))$, then $\omega(\beta,A) \in \Z$.
\end{theorem}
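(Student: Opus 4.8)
The plan is to follow Avila's original argument from \cite{AGlobalActa}, adapted to the $\mathrm{M}_2(\C)$ (rather than $\mathrm{SL}(2,\C)$) setting. The key point is that $L(\beta,A_\e)$, viewed as a function of $\e$, is a convex, piecewise-linear-in-the-limit function whose left and right derivatives lie in a fixed lattice; quantization of the acceleration is precisely the statement that the right-derivative at $\e = 0^+$ is quantized. First I would recall the integral formula
\[
L(\beta, A_\e)
=
\lim_{n \to \infty} \frac{1}{n} \int_\T \log \|(A_\e)_n(\theta)\| \, d\theta,
\]
and observe that $u_n(\e) \eqdef \frac1n \int_\T \log\|(A_\e)_n(\theta)\|\,d\theta$ is convex in $\e$ (subharmonicity of $\theta + i\e \mapsto \log\|A_n(\theta+i\e)\|$, integrated over $\theta$, gives convexity in the imaginary direction), so $L(\beta, A_\e) = \inf_n u_n(\e)$ is convex as well; hence the one-sided derivative defining $\omega$ exists.

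The heart of the argument is a normalization/degree computation. Write $A(\theta) = \sum_{k} \hat A_k e^{2\pi i k\theta}$ with matrix Fourier coefficients, and let $d_+ = \max\{k : \hat A_k \neq 0\}$, $d_- = \min\{k : \hat A_k \neq 0\}$. For $\e \to +\infty$, $A_\e(\theta) = A(\theta + i\e)$ is dominated by its top Fourier mode: $A(\theta+i\e) = e^{-2\pi k\e} \hat A_k e^{2\pi i k\theta}(1 + o(1))$ along the relevant scale, so that for $\e$ large,
\[
L(\beta, A_\e) = 2\pi d_+ \e + O(1)
\]
(up to the technical point that $\hat A_{d_+}$ may be a rank-one or singular matrix, which is exactly the singular phenomenon the paper has to confront — see below). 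By convexity, the right-derivative $2\pi \omega(\beta, A) = \partial_\e^+ L(\beta, A_\e)|_{\e=0}$ is a lower bound for the asymptotic slope $2\pi d_+$, and running the same analysis as $\e \to -\infty$ controls it from the other side; the remaining content is that between these extremes the slope can only jump by integers (for $\mathrm{SL}(2,\C)$) or half-integers (for $\mathrm{M}_2(\C)$). The half-integer phenomenon in the $\mathrm{M}_2(\C)$ case comes from the fact that $\det A_\e(\theta)$ is itself an analytic function whose logarithmic integral $\int_\T \log|\det A_\e(\theta)|\,d\theta$ has an integer-quantized slope (by Jensen's formula / the argument principle applied to the scalar function $\det A$), and $L(\beta, A_\e) - \frac12 \int_\T \log|\det A_\e|\,d\theta = L(\beta, \widetilde A_\e)$ where $\widetilde A_\e = (\det A_\e)^{-1/2} A_\e$ is (locally, after passing to a branch) an $\mathrm{SL}(2,\C)$-cocycle, whose acceleration is an integer by the $\mathrm{SL}(2,\C)$ case; combining gives $\omega(\beta, A) \in \tfrac12\Z$. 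So the structure is: (i) reduce $\mathrm{M}_2(\C)$ to $\mathrm{SL}(2,\C)$ plus a scalar determinant term whose slope is an integer; (ii) prove the $\mathrm{SL}(2,\C)$ quantization.

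For step (ii) — the genuinely nontrivial part — I would invoke the argument of \cite{AGlobalActa}: one shows that if the right-derivative of $L(\beta, A_\e)$ at some $\e_0$ were not in $\Z$, then for $\e$ slightly larger than $\e_0$ the cocycle $(\beta, A_\e)$ would be uniformly hyperbolic with a dominated splitting whose invariant line bundles have a well-defined winding number (degree) over $\T$; the slope of $L$ on that interval equals this integer degree (this uses that for a uniformly hyperbolic $\mathrm{SL}(2,\C)$ cocycle the Lyapunov exponent is pluriharmonic and its $\e$-slope records the degree of the unstable bundle). Since $L$ is convex and its asymptotic slopes at $\pm\infty$ are integers, and its slope on every interval of uniform hyperbolicity is an integer, and the non-hyperbolic $\e$'s form a closed set of measure zero on which (by convexity) the slope is squeezed between adjacent integer values, one concludes the right-slope at $\e=0$ is an integer. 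The main obstacle I anticipate is \emph{singularity}: the paper's cocycles $N^z$ have $\hat A_{d_+}$ of rank one (the matrix $\left(\begin{smallmatrix} 0 & i \\ i & 0\end{smallmatrix}\right)$-type leading term is invertible here, but in general $f(x) = \left(\begin{smallmatrix}0&1\\1&0\end{smallmatrix}\right)$-type factors are singular), so the clean ``$L(\beta, A_\e) = 2\pi d_+ \e + O(1)$'' asymptotic and the uniform-hyperbolicity structure theory must be checked to survive when the cocycle map can be non-invertible on a measure-zero set of $\theta$. One handles this by passing to the analytic renormalization (as the paper does in going from $M^z$ to $N^z$) and by noting that the Lyapunov exponent and its convexity/subharmonicity properties only depend on $\log\|A_n\|$, which remains subharmonic regardless of invertibility; the degree-theoretic part is then carried out on the open set where the dominated splitting exists. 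I expect the write-up to either cite \cite{AGlobalActa} directly for the $\mathrm{SL}(2,\C)$ statement and supply only the determinant-reduction to get the $\mathrm{M}_2(\C)$ version, or to cite \cite{AJM}/\cite{marxpreprint} where the singular analytic case is treated.
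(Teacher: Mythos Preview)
The paper does not prove this theorem at all: immediately after the statement it records that the $\mathrm{SL}(2,\C)$ case is due to Avila \cite{AGlobalActa} and the extension to $\mathrm{M}_d(\C)$-valued cocycles (all $d\ge 2$) is due to Avila--Jitomirskaya--Sadel \cite{AJS}. You anticipated this possibility in your final sentence, so in that sense your proposal is on target; the only correction is bibliographic --- the $\mathrm{M}_2(\C)$ quantization is taken from \cite{AJS}, not from \cite{AJM} or \cite{marxpreprint}.

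Since you went further and sketched an argument, a couple of comments on that sketch. Your determinant reduction $L(\beta,A_\e)-\tfrac12\int_\T\log|\det A_\e|\,d\theta = L(\beta,\widetilde A_\e)$ with $\widetilde A_\e=(\det A_\e)^{-1/2}A_\e$ is morally right but technically fragile: $\det A(\cdot)$ may vanish (indeed this is exactly the singular case the paper cares about), and even where it does not, a global analytic square root on $\T$ need not exist. The clean way to say what you want is to work with the \emph{sum} and \emph{difference} of the two Lyapunov exponents: $L_1+L_2=\int_\T\log|\det A_\e|\,d\theta$ has integer-quantized $\e$-slope by Jensen's formula for the scalar analytic function $\det A$, and $L_1-L_2$ has integer-quantized slope by the $\mathrm{SL}(2,\C)$/dominated-splitting degree argument; adding gives $2L_1$ with integer slope, hence $\omega\in\tfrac12\Z$. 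Second, your claim that ``the non-hyperbolic $\e$'s form a closed set of measure zero'' is not needed and not obviously true in general; what the actual argument uses is only that on each open interval of $\e$ where the cocycle admits a dominated splitting the function $\e\mapsto L(\beta,A_\e)$ is affine with integer slope (the topological degree of the unstable section), and convexity then forces the one-sided derivative at every point to lie in $\Z$.
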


This was first proved in \cite{AGlobalActa} for $\mathrm{SL}(2,\C)$-valued cocycles and later extended to $\mathrm{M}_d(\C)$-valued cocycles in \cite{AJS} for all $d\ge 2$. As a consequence, $L(\beta,A_\varepsilon)$ is a piecewise affine function in $\varepsilon$. Denote the upper and lower Lyapunov exponents of $(\beta,A)$ by $L_1(\beta,A)\ge L_2(\beta,A)$, where $L_1(\beta,A)=L(\beta,A)$ (see Section \ref{sec:dominated splitting} for a more detailed description). The following theorem is again from \cite{AJS}.

\begin{theorem}\label{t.ds:affine}
$L_1(\beta,A)>L_2(\beta,A)$ and $L(\beta,A_\varepsilon)$ is affine around $\varepsilon=0$ if and only if $(\beta,A)$ admits a dominated splitting.
\end{theorem}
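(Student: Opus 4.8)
The plan is to prove the stated equivalence in its two directions, using the standard machinery of analytic one‑frequency cocycles: subharmonicity of $\varepsilon\mapsto L(\beta,A_\varepsilon)$ on a strip, the quantization Theorem~\ref{t.quantization_acceleration}, and the measurable Oseledets splitting of $(\beta,A)$. The easy direction is ``dominated splitting $\Rightarrow$ ($L_1>L_2$ and affineness)''. First I would record that a continuous invariant splitting $\C^2=E^u(\theta)\oplus E^s(\theta)$ of an analytic one‑frequency cocycle is automatically analytic, being the unique fixed point of a contracting graph transform that preserves analytic sections. Conjugating $A$ by an analytic frame adapted to the splitting puts the cocycle in upper triangular form $\widetilde A(\theta)=\begin{pmatrix}a(\theta)&b(\theta)\\0&d(\theta)\end{pmatrix}$ with $a,b,d$ analytic; domination forces $a$ to be nonvanishing (hence bounded away from $0$) and yields, after telescoping, $|\prod_j d(\theta+j\beta)|\le C\lambda^n|\prod_j a(\theta+j\beta)|$ with $\lambda<1$, uniformly in $\theta$. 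Reading growth off the triangular form gives $L_1(\beta,A)=\int_\T\log|a|\,d\theta$ and $L_2(\beta,A)=\int_\T\log|d|\,d\theta$, so $L_1(\beta,A)-L_2(\beta,A)\ge-\log\lambda>0$. Since domination is open and stable under complexification, $(\beta,A_\varepsilon)$ remains dominated for $|\varepsilon|<\delta$ with analytic nonvanishing diagonal entry $a(\theta+i\varepsilon)$, so $L(\beta,A_\varepsilon)=\int_\T\log|a(\theta+i\varepsilon)|\,d\theta$, and because $\log|a|$ is harmonic on that strip its average over the circles $\{\Im=\varepsilon\}$ is affine in $\varepsilon$.

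For the converse, assume $L_1:=L_1(\beta,A)>L_2:=L_2(\beta,A)$ and that $\varepsilon\mapsto L(\beta,A_\varepsilon)$ is affine on $(-\delta,\delta)$; the goal is the dominated splitting. I would use the equivalence between dominated splitting and a uniform singular‑value gap $\sigma_2(A_n(\theta))\le Ce^{-cn}\sigma_1(A_n(\theta))$ holding for all $\theta$ and $n$: once that gap is available, a standard cone‑field/graph‑transform argument produces the continuous invariant splitting. To obtain the gap, following Avila's global theory and \cite{AJS}, I would pass to the complexified cocycle and exploit the affineness. The functions $u_n(\theta+i\varepsilon)=\tfrac1n\log\|A_n(\theta+i\varepsilon)\|$ are subharmonic and uniformly bounded on the strip, and their circle averages converge to $L(\beta,A_\varepsilon)$; affineness of the limit means it is harmonic in $\varepsilon$ with no ``extra'' Riesz mass near $\Im=0$, and a compactness argument for subharmonic functions then upgrades the a.e.\ convergence $u_n\to L_1$ to convergence that is uniform in $\theta$ along each line $\Im=\varepsilon$ with $0<|\varepsilon|<\delta$. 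Feeding this uniform bound back through a Cauchy estimate on the analytic cocycle, and combining it with the analogous uniform control of $\tfrac1n\log|\det A_n|$ (whose defect from affineness is governed by the quantization Theorem~\ref{t.quantization_acceleration}), yields the uniform singular‑value gap, hence the dominated splitting.

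The main obstacle is precisely this last step: converting the \emph{integrated}, global hypothesis that $\varepsilon\mapsto L(\beta,A_\varepsilon)$ is affine into a \emph{pointwise}, uniform‑in‑$\theta$ statement about the growth of $\|A_n(\theta)\|$ and the direction of its dominant singular vector. This is where the genuinely hard analytic input lives — subharmonicity estimates on the strip, control of how the Riesz mass of $\log\|A_n\|$ can escape toward $\Im=0$, and the acceleration quantization.

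It is worth stressing that the whole argument is phrased in terms of singular values rather than the projective action, so it is insensitive to whether $A$ takes values in $\mathrm{SL}(2,\C)$ or merely in $\mathrm{M}_2(\C)$ with zeros of the determinant. This is exactly why the $\mathrm{M}_d(\C)$ version of \cite{AJS} applies without modification to the singular cocycles $N^z$ of the present paper, even though $\det N^z(\theta)=-4\cos^2(2\pi\theta)$ vanishes.
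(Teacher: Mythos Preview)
The paper does not prove Theorem~\ref{t.ds:affine}; it is quoted as a black box from \cite{AJS} (see the sentence immediately preceding the statement: ``The following theorem is again from \cite{AJS}''). So there is no ``paper's own proof'' to compare against, and your proposal is an attempt to supply what the paper deliberately outsources.

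As a sketch of the \cite{AJS} argument, your outline is broadly on target. The easy direction is essentially complete: analyticity of the invariant sections of a dominated analytic cocycle, triangularization, nonvanishing of the dominant diagonal entry, and harmonicity of $\varepsilon\mapsto\int_\T\log|a(\theta+i\varepsilon)|\,d\theta$ together give both $L_1>L_2$ and local affineness. For the converse you correctly identify the crux --- upgrading the integrated hypothesis (affineness of $\varepsilon\mapsto L(\beta,A_\varepsilon)$) to a pointwise, $\theta$-uniform singular-value gap --- and you correctly locate the tools (subharmonicity on the strip, compactness of subharmonic families, quantization). What you have written, however, remains a plan rather than a proof: the passage from ``no extra Riesz mass near $\Im=0$'' to ``uniform convergence of $\tfrac1n\log\|A_n(\theta+i\varepsilon)\|$ along horizontal lines'' and then to the cone criterion is exactly the substantial analytic content of \cite{AJS}, and you have not carried it out. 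If the intent is to use the theorem as the paper does, a citation suffices; if the intent is to reprove it, the hard direction needs the full \cite{AJS} machinery written out.
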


See Section~\ref{sec:dominated splitting} for the definition of dominated splitting. In light of Theorems~\ref{t.quantization_acceleration} and \ref{t.ds:affine}, we will consider the complexified Lyapunov exponent
\[
L(\beta,z;\e)
\eqdef
L(\beta,N_{\e}^z).
\]

Notice that by Theorem~\ref{t.quantization_acceleration}, $\omega(\beta,N_\e^z)\in \frac12\Z$. However, in order to exclude supercritical energies, we will need to avoid the case of fractional acceleration in our setting. Essentially, this follows from the fact that $M$ is meromorphic with all poles on the real axis.
\begin{lemma} \label{l:accelnothalf}
For all $z \in \partial \D$ and all $\e\in \R$,
\[
\omega(\beta,N_\e^z)
\in \Z.
\]
\end{lemma}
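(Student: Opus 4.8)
The plan is to use that, although $N^z$ is only a singular $\mathrm{M}_2(\C)$-valued cocycle, away from the real axis it is a nonvanishing scalar multiple of an honest $\mathrm{SL}(2,\C)$-valued cocycle, so that the sharper ($\mathrm{SL}(2,\C)$) half of Theorem~\ref{t.quantization_acceleration} applies for $\e \neq 0$; the value at $\e = 0$ is then recovered by a soft convexity argument.

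Concretely, recall $N^z(\theta) = -2i\cos(2\pi\theta)\,M^z(\theta)$ with $\det M^z \equiv 1$, and that all poles of the meromorphic map $M^z$ (equivalently, all zeros of $\cos(2\pi\,\cdot\,)$) lie on the real axis. Hence for fixed $\e \neq 0$ the matrix $M_\e^z := M^z(\cdot + i\e)$ is analytic on a neighborhood of $\T$ and lies in $C^\omega(\T,\mathrm{SL}(2,\C))$, while $c_\e(\theta) := -2i\cos(2\pi(\theta+i\e))$ is analytic and nonvanishing on $\T$. I would first record the elementary identity $L(\beta, fA) = L(\beta, A) + \int_\T \log|f|\,d\theta$, valid for any integrable cocycle $A$ and nonvanishing $f \in C^\omega(\T,\C)$ (the iterate $f_n(\theta) = f(\theta+(n-1)\beta)\cdots f(\theta)$ contributes $\frac1n\sum_{j=0}^{n-1}\int_\T\log|f(\theta + j\beta)|\,d\theta = \int_\T\log|f|\,d\theta$ by translation invariance). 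Applying it with $A = M_\e^z$ and $f = c_\e$, and using $|{-2i}| = 2$ together with \eqref{eq:logcosint}, gives
\[
L(\beta, N_\e^z) = L(\beta, M_\e^z) + \int_\T \log|c_\e(\theta)|\,d\theta = L(\beta, M_\e^z) + 2\pi|\e| .
\]
Reading this as an identity between functions of $\e$ near a fixed $\e_0 \neq 0$ and taking the right-derivative (the defining operation for the acceleration), the term $2\pi|\e|$ contributes exactly $\pm 1$, so $\omega(\beta, N_{\e_0}^z) = \omega(\beta, M_{\e_0}^z) \pm 1$. Since $M_{\e_0}^z \in C^\omega(\T,\mathrm{SL}(2,\C))$, the $\mathrm{SL}(2,\C)$ clause of Theorem~\ref{t.quantization_acceleration} gives $\omega(\beta, M_{\e_0}^z) \in \Z$, hence $\omega(\beta, N_{\e_0}^z) \in \Z$ for every $\e_0 \neq 0$. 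For $\e = 0$, the function $g(\e) := \frac{1}{2\pi}L(\beta, N_\e^z)$ is convex and, by Theorem~\ref{t.quantization_acceleration}, piecewise affine on $\R$, hence affine on some interval $(0,\delta)$ and continuous at $0$. On $(0,\delta)$ its slope equals $\omega(\beta, N_\e^z) \in \Z$ by the previous step, and by continuity at $0$ this same integer is the right-derivative of $g$ at $0$; that is, $\omega(\beta, N_0^z) = \omega(\beta, N^z) \in \Z$. This finishes all cases.

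The main obstacle is precisely $\e = 0$, where $M^z$ genuinely has poles so that the $\mathrm{SL}(2,\C)$ quantization theorem cannot be applied to $N^z$ directly; the resolution is to avoid invoking it there and instead transfer integrality of the acceleration from a punctured right-neighborhood of $0$ using only convexity and the piecewise-affine structure of the complexified Lyapunov exponent. A minor point to keep in mind is that the strip of analyticity of $M_{\e_0}^z$ shrinks as $\e_0 \to 0$, but this is harmless since the acceleration depends only on the germ of the cocycle near the line $\Im \theta = \e_0$.
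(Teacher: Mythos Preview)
Your proof is correct and follows essentially the same approach as the paper: both use the factorization $N^z_\e = (-2i\cos(2\pi(\cdot+i\e)))\,M^z_\e$, the fact that $M^z_\e \in C^\omega(\T,\mathrm{SL}(2,\C))$ for $\e \neq 0$ (since the poles of $M^z$ lie on the real axis), and the $\mathrm{SL}(2,\C)$ clause of Theorem~\ref{t.quantization_acceleration} to get integrality for $\e \neq 0$, then transfer this to $\e = 0$ via the piecewise-affine structure of $\e \mapsto L(\beta,N^z_\e)$. Your treatment of the $\e = 0$ case is slightly more explicit than the paper's, but the substance is identical.
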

\begin{proof}
Using formula \eqref{eq:logcosint} and the relation between $N^z$ and $M^z$, we clearly have,
\begin{equation}\label{eq:LE:NtoM}
L(\beta,z;\e)
=
L(\beta,M_{\e}^z) + 2\pi|\e|,\mbox{ for all } \e\in \R.
\end{equation}
Notice that for all $\e>0$, $M_\e^z\in C^\omega(\T,\mathrm{SL}(2,\C))$ which implies $\omega(\beta,M_\e^z)\in\Z$ for all $\e\neq 0$. Hence we obtain
\begin{equation} \label{eq:MN:accelrel}
\omega(\beta,N_\e^z)
=
\begin{cases}
\omega(\beta,M_\e^z)+1,\ \e>0,\\
\omega(\beta,M_\e^z)-1,\ \e<0.
\end{cases}
\end{equation}
Consequently, $\omega(\beta,N_\e^z)\in\Z$ for all $\e\neq 0$ by Theorem~\ref{t.quantization_acceleration}. This in turn implies that $\omega(\beta,N^z)=\omega(\beta,N_0^z)\in \Z$ since $L(\beta,z;\e)$ is piecewise linear and continuous in $\e$ for all $\e\in\R$.
\end{proof}

By real symmetricity of $\sin(x)$, we have
$$
N^z(\theta-i\e)
=
-P\cdot\overline{N^z(\theta+i\e)}\cdot P,
\text{ for all } z \in \partial \D,
$$
where
\[
P= \begin{pmatrix} 0 & 1 \\ 1 & 0 \end{pmatrix}.
\]
Hence,  we must have that $L(\beta,z;\varepsilon) = L(\beta,z;-\e)$, i.e., the complexified Lyapunov exponent is even in $\e$.

\begin{theorem}\label{t.le:at:largeheight}
For all $\e$ with $|\e| > 0$ sufficiently large, $L(\beta,z;\e) = 2 \pi |\e|$ uniformly for all $z \in \partial \D$.
\end{theorem}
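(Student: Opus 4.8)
The plan is to reduce the claim, via the relation \eqref{eq:LE:NtoM}, to showing that $\phi_z(\e) := L(\beta,z;\e) - 2\pi|\e| = L(\beta, M_\e^z)$ vanishes once $|\e|$ is large, with the threshold \emph{uniform in $z \in \partial\D$}. By the evenness of $L(\beta,z;\cdot)$ established just above, it suffices to treat $\e > 0$. The argument splits into a soft estimate showing $\phi_z(\e)\to 0$ uniformly in $z$, and a rigidity argument, powered by quantization of the acceleration (Lemma~\ref{l:accelnothalf}), upgrading this to $\phi_z(\e)\equiv 0$ past a $z$-independent threshold.

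For the soft part, I would identify the correct renormalization of $N^z$ at large imaginary height. Expanding $2i\sin(2\pi(\theta+i\e)) = e^{2\pi i\theta}e^{-2\pi\e} - e^{-2\pi i\theta}e^{2\pi\e}$ and multiplying $N_\e^z(\theta)$ by $e^{-2\pi\e}$ yields, uniformly in $\theta\in\T$ and $z\in\partial\D$,
\[
\bigl\| e^{-2\pi\e}N_\e^z(\theta) - D(\theta)\bigr\| \le C e^{-2\pi\e},
\qquad
D(\theta) := \begin{pmatrix} 0 & -e^{-2\pi i\theta} \\ -e^{-2\pi i\theta} & 0 \end{pmatrix}.
\]
Since $D(\theta)$ is the scalar $-e^{-2\pi i\theta}$ (of modulus $1$) times the fixed permutation matrix $\left(\begin{smallmatrix}0&1\\1&0\end{smallmatrix}\right)$, all of its iterates have norm $1$, so $L(\beta,D)=0$; and because $L(\beta,gA) = L(\beta,A) + \int_\T\log|g|$ for scalar $g$, the renormalization gives $\phi_z(\e) = L(\beta, e^{-2\pi\e}N_\e^z)$. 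Using only $L(\beta,A)\le \int_\T\log\|A(\theta)\|\,d\theta$ together with $\|e^{-2\pi\e}N_\e^z(\theta)\|\le 1 + Ce^{-2\pi\e}$, one gets $0\le \phi_z(\e) \le \log(1+Ce^{-2\pi\e})$ for all $z$, the lower bound $\phi_z(\e) = L(\beta,M_\e^z)\ge 0$ coming from $M_\e^z\in C^\omega(\T,\mathrm{SL}(2,\C))$ for $\e\neq 0$. In particular there is an $\e_0$, independent of $z$, with $0\le \phi_z(\e) < 2\pi$ for all $\e\ge\e_0$ and all $z\in\partial\D$.

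For the rigidity part, observe that $\e\mapsto L(\beta,z;\e)$ is defined on all of $\R$ (as $N^z$ is entire in $\theta$), is convex by subharmonicity, and by Lemma~\ref{l:accelnothalf} has one-sided slopes in $2\pi\Z$ at every point; hence it is piecewise affine with slopes in $2\pi\Z$, and so is $\phi_z$. Thus $\phi_z$ is convex, piecewise affine with slopes in $2\pi\Z$, nonnegative, and $\phi_z(\e)\to 0$ as $\e\to+\infty$; convexity then forces every slope of $\phi_z$ to be $\le 0$ (a positive slope would drive $\phi_z$ to $+\infty$). Set $\e_1(z) = \inf\{\e\ge\e_0 : \phi_z(\e) = 0\}$. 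On $[\e_0,\e_1(z))$ one has $\phi_z > 0$, so there the slope cannot be $0$ — a zero slope would, by convexity, persist and leave $\phi_z$ equal to a positive constant, contradicting $\phi_z\to 0$ — hence the slope is $\le -2\pi$; consequently the convex function $\phi_z$ drops at rate at least $2\pi$ from the value $\phi_z(\e_0) < 2\pi$, giving $\e_1(z) \le \e_0 + \tfrac{1}{2\pi}\phi_z(\e_0) < \e_0 + 1$. Finally, once $\phi_z$ reaches $0$ it cannot become positive again without creating a positive slope (again contradicting $\phi_z\to 0$), so $\phi_z(\e) = 0$, i.e. $L(\beta,z;\e) = 2\pi\e$, for all $\e\ge\e_0+1$ and all $z\in\partial\D$; the case $\e\le -(\e_0+1)$ follows by evenness.

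I expect the soft part to be entirely routine once the renormalizing factor $e^{-2\pi|\e|}$ and the permutation-matrix structure of the limit are spotted; the only point needing a little care is uniformity in $z$, which is automatic here because $e^{-2\pi\e}N_\e^z\to D$ uniformly in both $\theta$ and $z$ and the limit $D$ is $z$-independent. The genuinely essential ingredient is Lemma~\ref{l:accelnothalf} — integrality, not merely half-integrality, of the acceleration, which is precisely where the meromorphy of $M^z$ with all poles on the real axis enters — since without it $\phi_z$ could a priori descend to $0$ through half-integer slopes and the convexity bookkeeping would only pin down $L(\beta,z;\e)$ up to an additive error rather than exactly. So the heart of the argument is that already-proven lemma, and the remaining work is the convexity/quantization bookkeeping that promotes ``$\phi_z\to 0$'' to ``$\phi_z\equiv 0$ eventually, uniformly in $z$''.
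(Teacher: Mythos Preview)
Your proof is correct and follows essentially the same approach as the paper's: renormalize $N_\e^z$ by $e^{-2\pi\e}$, identify the uniform limit $D(\theta)$, and invoke convexity plus quantization of the acceleration to upgrade $\phi_z\to 0$ to $\phi_z\equiv 0$ past a uniform threshold (the paper compresses your entire rigidity paragraph into the phrase ``by convexity and quantization'', and gets the lower bound via a two-sided estimate $(1-\delta)\|v\|\le\|Gv\|\le(1+\delta)\|v\|$ rather than your cleaner $L(\beta,M_\e^z)\ge 0$). One small remark: your rigidity bookkeeping works equally well with half-integer quantization --- slopes in $\pi\Z$ still force the convex nonnegative $\phi_z$ to hit zero within $\e_0+2$ --- so Lemma~\ref{l:accelnothalf} is not as indispensable here as your final paragraph suggests; its real role is in the proof of Theorem~\ref{t.le=-log2}, where ruling out slope $1/2$ is what forces the dichotomy between acceleration $0$ and $1$.
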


\begin{proof}
Let's first consider the case $\e>0$. Let $G(\theta;z,\e) \eqdef e^{-2\pi\e}N_{\e}^z(\theta)$. Clearly,
\begin{equation} \label{eq:GLE:rel}
L(\beta,G(\cdot;z,\e))
=
L(\beta,N_\e^z) - 2\pi\e
=
L(\beta,z;\varepsilon) - 2\pi \e.
\end{equation}
Thus, one has
\[
G(\theta;z,\e)
=
\begin{pmatrix}
-2iz^{-1} e^{-2\pi \e} & e^{2\pi i \theta - 4\pi \e} - e^{-2\pi i \theta } \\
e^{2\pi i \theta - 4\pi \e} - e^{-2\pi i \theta } & -2iz e^{-2\pi \e}
\end{pmatrix},
\]
and hence
\[
\lim_{\e \to \infty} G(\theta;z,\e)
=
\begin{pmatrix}
0 & -e^{-2\pi i \theta} \\
- e^{-2\pi i \theta} & 0
\end{pmatrix}.
\]
Clearly, the convergence above holds uniformly in $\theta \in \T$ and in $z\in\partial \D$. Using this, it is easy to see that, for every $\delta > 0$, there is a constant $M = M(\delta)$ such that
\[
(1-\delta) \|\vec v\|
\leq
\| G(\theta;z,\e) \vec v\|
\leq
(1+\delta) \| \vec v\|
\]
for all $\vec v \in \C^2$, all $\theta\in\T$, and all $z\in\partial \D$ whenever $\e  \ge M$. Consequently, $L(\beta,G(\cdot;z,\e)) = 0$ for all $z\in\partial \D$ for large enough $\e$ by convexity and quantization. The result then follows from \eqref{eq:GLE:rel}. Since $L(\beta,z;\e)$ is even in $\e$, we thus obtain the result for all $\e$ with $|\e|$ large.

\end{proof}

\begin{theorem}\label{t.le=-log2}
$L(\beta,z;\e)\ge 2\pi|\e|$ for all $z \in \partial \D$ and all $\e \in \R$. Moreover, the equality holds if and only if $z\in\Sigma_\beta$.
See Figure~\ref{f:complexified:LE} for the graph of $L(\beta,z;\e)$.
\end{theorem}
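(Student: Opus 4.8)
The plan is to establish the lower bound $L(\beta,z;\e)\ge 2\pi|\e|$ first, and then treat the equality case as an application of Theorem~\ref{t:DS:N} (equivalently Theorem~\ref{t:johnson}) together with the quantization machinery (Lemma~\ref{l:accelnothalf}, Theorem~\ref{t.ds:affine}). For the inequality, I would argue from \eqref{eq:LE:NtoM}: since $L(\beta,z;\e)=L(\beta,M_\e^z)+2\pi|\e|$, it suffices to show $L(\beta,M_\e^z)\ge 0$ for every $z\in\partial\D$ and $\e\in\R$. For $\e\neq 0$ this is immediate because $M_\e^z\in C^\omega(\T,\mathrm{SL}(2,\C))$, and the Lyapunov exponent of any $\mathrm{SL}(2,\C)$ cocycle is nonnegative (the norm of an $\mathrm{SL}_2$ matrix is at least $1$). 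For $\e=0$ one passes to the limit using continuity of $\e\mapsto L(\beta,z;\e)$. This gives $L(\beta,z;\e)\ge 2\pi|\e|$ for all $\e\in\R$, with the graph pinched from below by the ``V'' $2\pi|\e|$; combined with Theorem~\ref{t.le:at:largeheight}, which says $L(\beta,z;\e)=2\pi|\e|$ for $|\e|$ large, and the fact (from Theorem~\ref{t.quantization_acceleration} and Lemma~\ref{l:accelnothalf}) that $L(\beta,z;\e)$ is convex, piecewise affine with integer slopes, and even in $\e$, we learn that the graph of $L(\beta,z;\cdot)$ either coincides with $2\pi|\e|$ everywhere or lies strictly above it on a neighborhood of $\e=0$ and has a corner at $\e=0$ of the ``wrong'' type (acceleration $\omega(\beta,N^z)=0$ would force affine behavior near $0$, which by Theorem~\ref{t.le:at:largeheight} and convexity would propagate out to $2\pi|\e|$).

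For the equality case I would proceed by a chain of equivalences. Suppose first $z\in\Sigma_\beta$. By Theorem~\ref{t:DS:N}, $(\beta,N^z)\notin\DS$. By Theorem~\ref{t.ds:affine}, failing to admit a dominated splitting means that either $L_1(\beta,N^z)=L_2(\beta,N^z)$ or $L(\beta,z;\e)$ is not affine around $\e=0$. In the latter case the even, convex, integer-slope function $L(\beta,z;\cdot)$ has a corner at $\e=0$; since its right slope is a nonnegative integer (acceleration in $\Z$ by Lemma~\ref{l:accelnothalf}) and the function equals $2\pi|\e|$ for large $|\e|$ with slope $2\pi$ there, the only convex possibility with a genuine corner at $0$ and matching the outer ``V'' is $L(\beta,z;\e)=2\pi|\e|$ identically. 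In the former case, $L_1=L_2$ together with $L(\beta,z;\e)\ge 2\pi|\e|$ and $L(\beta,N_\e^z)=L(\beta,M_\e^z)+2\pi|\e|$ forces $L(\beta,M_\e^z)=0$; since $\det M_\e^z\equiv 1$ we get $L_1(\beta,M_\e^z)=L_2(\beta,M_\e^z)=0$, and then $L(\beta,M_\e^z)$ is affine (constantly zero), contradicting non-domination unless again the boundary case collapses everything — so one checks this case reduces to $L(\beta,z;\e)=2\pi|\e|$ as well. Conversely, if $L(\beta,z;\e)=2\pi|\e|$ for all $\e$, then $L(\beta,z;\cdot)$ is not affine around $\e=0$ (it has a corner), so by Theorem~\ref{t.ds:affine} $(\beta,N^z)\notin\DS$, whence $z\in\Sigma_\beta$ by Theorem~\ref{t:DS:N}.

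The main obstacle I anticipate is the bookkeeping in the equality case: one must rule out the scenario in which $L(\beta,z;\cdot)$ lies strictly above $2\pi|\e|$ near $\e=0$ yet still has a corner there (so $(\beta,N^z)\notin\DS$), which would break the clean equivalence ``$z\in\Sigma_\beta\iff L(\beta,z;\e)=2\pi|\e|$''. This is where convexity, evenness, integrality of slopes, and crucially Theorem~\ref{t.le:at:largeheight} must be combined: the graph is a convex even piecewise-linear function agreeing with $2\pi|\e|$ for $|\e|\ge M$; if it had a corner at $0$ with right-slope $2\pi k$ for some integer $k\ge 1$, convexity would force it to be $\ge 2\pi k|\e|$ near $0$ and hence $>2\pi|\e|$ for small $\e>0$ when $k\ge 2$, but then it cannot bend back down to meet $2\pi|\e|$ at $|\e|=M$ while staying convex — so $k=1$, i.e.\ the graph is exactly $2\pi|\e|$ on all of $\R$. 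Making this squeeze argument fully rigorous, and confirming that the $L_1=L_2$ branch of Theorem~\ref{t.ds:affine} does not produce an additional exceptional family, is the delicate part; everything else is a direct consequence of results already assembled above.
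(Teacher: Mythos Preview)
Your proposal is correct and, for the equality case, follows essentially the same route as the paper (Theorem~\ref{t:DS:N} plus Theorem~\ref{t.ds:affine}, combined with evenness, convexity, and Theorem~\ref{t.le:at:largeheight}). Your argument for the lower bound via $L(\beta,M_\e^z)\ge 0$ using that $M_\e^z$ is $\mathrm{SL}(2,\C)$-valued for $\e\neq 0$ is a genuinely more direct alternative to the paper's: the paper instead first pins down that the acceleration for $\e\ge 0$ lies in $\{0,1\}$ (convexity, evenness, Theorem~\ref{t.le:at:largeheight}, and Lemma~\ref{l:accelnothalf}), and then reads off both the inequality and the equality case from this single fact. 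Your route to the inequality is shorter; the paper's route makes the equality analysis cleaner.

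The obstacle you anticipate (a corner at $0$ with $L(0)>0$, and the separate $L_1=L_2$ branch) dissolves once you make the paper's observation explicit: since $L(\beta,z;\cdot)$ is even, convex, piecewise affine with integer acceleration, and has slope $2\pi$ for large $\e$, the acceleration at $\e=0^+$ is either $0$ or $1$. If it is $1$, the slope is $2\pi$ on all of $[0,\infty)$, so integrating back from $\e=M$ forces $L(0)=0$ and hence $L\equiv 2\pi|\e|$; no separate ``corner with $L(0)>0$'' case survives. If it is $0$, then $L$ is flat near $0$ with value $L(0)=2\pi\e_0>0$ (where $\e_0$ is the switch point), so $L_1(\beta,N^z)>0$; since $\det N^z(x)=-4\cos^2(2\pi x)$ gives $L_1+L_2=0$, one has $L_1>L_2$ automatically, and Theorem~\ref{t.ds:affine} applies cleanly. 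Thus the $L_1=L_2$ branch is not an independent case: it forces $L(0)=0$, which by your own lower bound and convexity is incompatible with acceleration $0$ at $0^+$, hence already falls under the ``acceleration $1$'' scenario.
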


\begin{proof}
Since $L(\beta,z;\e)$ is both even and convex in $\e$,  by quantization of acceleration, Theorem~\ref{t.le:at:largeheight} implies that for all $\e\ge0$, the acceleration is either $0$, $1/2$, $1$. By Lemma~\ref{l:accelnothalf}, the acceleration cannot be $1/2$. Thus, by Theorem~\ref{t.le:at:largeheight}, we have
$$
L(\beta,z;\e)\ge 2\pi\e
$$
for all $z \in \partial \D$ and all $\e\ge 0$. By Theorem~\ref{t:DS:N}, $z\notin \Sigma$ if and only if $(\beta,N^z)$ admits a dominated splitting. Since $L(\beta, z;\e)$ is even in $\e$, Theorem~\ref{t.ds:affine} then implies that $z\notin \Sigma$ if and only if the acceleration is $0$ around $\e=0$. Thus $z\in \Sigma$ if and only the acceleration is $1$ around $\e=0$. By convexity, quantization of acceleration, and Theorem~\ref{t.le:at:largeheight},
$z\in \Sigma$ if and only if
$$
L(\beta,z;\e) = 2\pi\e, \mbox{ for all } \e\ge0.
$$
Since $L$ is an even function of $\e$, the result for all $\e \in \R$ follows.
\end{proof}

\begin{remark}
Theorem~\ref{t.le:at:largeheight} and the proof of Theorem~\ref{t.le=-log2} together give a precise description of the graph of $L(\beta,z;\e)$ as a function of $\e$ for all $z\in\partial\D$; in turn, this gives us a precise description of the graph of $L(\beta,M^z_\e)$ in $\e$. See Figure~\ref{f:complexified:LE} below for the graphs of $L(\beta,z;\e)$ and $L(\beta,M^z_\e)$ both when $z\in\Sigma_\beta$ and when $z\in\partial\D\setminus\Sigma_\beta$. The graphs of $L(\beta,M^z_\e)$ also illustrate the following: due to the existence of poles of $M^z(\theta)$ on $\T$, one cannot use $L(\beta,M^z_\e)$ to classify energies $z\in\Sigma_\beta$ as supercritical, subcritical, and critical. For instance, $L(\beta,M^z)$ is neither affine nor convex around $\e=0$ for $z\in\partial\D\setminus\Sigma_\beta$.
\end{remark}

\begin{figure}[b]
\begin{center}
\begin{tikzpicture}[yscale=1]
\draw [help lines,->] (-1.6,0) -- (1.6,0);
\draw [help lines,->] (0,-0.5) -- (0,2.2);
\draw [thick,red,domain=-1.5:-0.4] plot (\x, {-1*\x});
\draw [thick,red,domain=0.4:1.5] plot (\x, {1*\x});
\draw [thick,red,domain=-0.4:0.4] plot (\x, {0.4+0*\x});
\draw [thick,blue,domain=-1.5:-0.4] plot (\x, {0*\x});
\draw [thick,blue,domain=0.4:1.5] plot (\x, {0*\x});
\draw [thick,blue,domain=-0.4:0] plot (\x, {1*\x+0.4});
\draw [thick,blue,domain=0:0.4] plot (\x, {-1*\x+0.4});
\node [above] at (0,2.2) {$L$};
\node [above] at (-1.5,1.7) {\hot{$\frac1{2\pi} L(\beta,z;\e)$}};
\node [above] at (-2,0) {\cold{$\frac1{2\pi} L(\beta,M^z_\e)$}};
\node [right] at (1.8,0) {$\e$};
\node[align=left, below] at (0,-0.5)%
{$z\in \partial\D\setminus \Sigma$};
\end{tikzpicture}
\hskip 1.8cm
\begin{tikzpicture}[yscale=1]
\draw [help lines,->] (-1.6,0) -- (1.6,0);
\draw [help lines,->] (0,-0.5) -- (0,2.2);
\draw [thick,red,domain=-1.5:0] plot (\x, {-1*\x});
\draw [thick,red,domain=0:1.5] plot (\x, {1*\x});
\draw [thick,blue,domain=-1.5:1.5] plot (\x, {0*\x});
\node [above] at (0,2.2) {$L$};
\node [above] at (-1.5,1.7) {\hot{$\frac1{2\pi} L(\beta,z;\e)$}};
\node [above] at (-2,0) {\cold{$\frac1{2\pi} L(\beta,M^z_\e)$}};
\node [right] at (1.8,0) {$\e$};
\node[align=left, below] at (0,-0.5)%
{$z\in\Sigma$};
\end{tikzpicture}
\caption{}
\label{f:complexified:LE}
\end{center}
\end{figure}

\vskip .5cm

\begin{proof}[Proof of Theorem~\ref{t.criticalenergy}]
By Theorem~\ref{t.le=-log2}, we have
$$
L(\beta,z;\e)\ge 2\pi|\e|, \mbox{ for all } \e \in \R, \; z \in \partial \D,
$$
and the equality
$$
L(\beta,z;\e)
=
2\pi|\e|, \mbox{ for all }\e \in \R,
$$
holds if and only if $z \in \Sigma$. Hence according to the classification in Section~\ref{sec:qw}, all $z\in\Sigma$ are critical. In particular, $L(\beta,z)=0$ for all $z\in\Sigma$.
\end{proof}

We now have all of the information that we need to prove the remaining part of Theorem~\ref{t:uamospec}, namely, that $\Sigma$ is a Cantor set of zero Lebesgue measure.

\begin{proof}[Proof of Theorem~\ref{t:uamospec}.\ref{t:zmspec}]
Fix $\beta, \theta$ with $\beta$ irrational. By Proposition~\ref{c:aepositiveLE}, the Lyapunov exponent $L(\beta,z)$ is positive for Lebesgue almost every spectral parameter $z\in\partial \mathbb D$, and Theorem~\ref{t.criticalenergy} implies that $L(\beta,z)=0$ on the spectrum. Hence $\sigma(U_{\beta,\theta})$ has Lebesgue measure zero. By ergodicity, $\sigma(U_{\beta,\theta})$ has no isolated points (cf.\ \cite[Theorem~10.16.1]{S2}), so the theorem is proved.
\end{proof}

\section{Johnson's Theorem for Singular CMV Matrices} \label{sec:johnson-marx}

In this section, we will describe a version of Johnson's theorem which holds for singular CMV matrices, following the broad outlines of the corresponding result in \cite{marxpreprint}. We will rely on \cite{GZ06} in this section. In particular, we shall use several theorems in \cite{GZ06} that are asserted in that paper for $z\in \mathbb C\setminus (\partial \mathbb D\cup\{0\})$ and for nonnegative real $\rho$, but are in fact true for $z\in \mathbb C\setminus ( \Sigma\cup\{0\})$ and for complex $\rho$ (see formula \eqref{eq:iSU2} for where $\rho$ appears). Here $\Sigma$ is the spectrum of the CMV matrix. It is straightforward to verify that the theorems that we need and quote apply directly to the more general setting.

\subsection{Dominated splitting and non-degenerate Lyapunov spectrum}\label{sec:dominated splitting}
We first introduce the idea of a dominated splitting, which is a notion from smooth ergodic theory that generalizes uniform hyperbolicity. Let $X$ be a compact metric space, $T:X\rightarrow X$ be a homeomorphism, and $\mu$ be any $T$-ergodic probability measure on $X$.

Recall that $\mathrm{M}_2(\mathbb C)$ refers to the set of $2\times 2$ complex matrices. By \eqref{dynamics:cocycle}, for each measurable $A:X\rightarrow\mathrm{M}_2(\mathbb C)$ there corresponds a dynamical system defined by
\[
(T,A):(x,v)
\mapsto
(Tx,A(x)v).
\]
We denote the iterates of this cocycle by $(T,A)^n = (T^n, A_n)$. By \eqref{iteration:cocycle}, $A_n(x)=A(T^{n-1}x)\cdots A(x)$ for integers $n>0$, and $A_0(x)$ is the identity matrix.

\begin{defi}
We further assume $A\in C(X,\mathrm{M}_2(\C))$. Then, we say $(T,A)$ admits a \emph{dominated splitting} (we write $(T,A)\in\mathcal{D}\mathcal{S}$) if there exists a continuous decomposition $\C^2=E^+(x)\oplus E^-(x)$ for all $x\in X$ with $\mathrm{dim}(E^+(x))=\mathrm{dim}(E^-(x))=1$ satisfying the following two properties:

\begin{enumerate}
\item For all $x\in X$, $A(x)E^{\pm}(x)\subseteq E^\pm(Tx)$.

\item There exists a $N\in\mathbb Z_+$ such that uniformly for all $x\in X$ and for all unit vectors $v^\pm\in E^\pm(x)$, it holds that
      \[
      \lVert A_N(x)v^+\rVert>\lVert A_N(x)v^-\rVert.
      \]
\end{enumerate}

\end{defi}

Equivalently, it is not difficult to see that a cocycle $(T,A)$ admits a dominated splitting if and only if there exists $N\in\mathbb Z_+$ and a continuous change of coordinates $Q\in C(X,\mathrm{GL}(2,\mathbb C))$ such that
\begin{equation}\label{eq:diag:DS}
Q(Tx)^{-1}A(x)Q(x)
=
\begin{pmatrix}
\lambda_1(x)&0\\
0& \lambda_2(x)
\end{pmatrix},
\end{equation}
where $\lambda_1,\lambda_2\in C(X,\mathbb C)$ satisfy
\begin{equation} \label{eq:unif:sep}
\prod^{N-1}_{i=0}\lvert\lambda_1(T^ix)\rvert
>
\prod^{N-1}_{i=0}\lvert\lambda_2(T^ix)\rvert \mbox{ for all } x\in X.
\end{equation}
Much like uniform hyperbolicity, $\DS$ is an open condition in the space $C(X,\mathrm M_2(\C))$ which may be detected by an invariant cone criterion.

For a matrix $A\in\mathrm{M}_2(\C)$, let $\sigma_1(A)\ge \sigma_2(A)$ be the singular values of $A$, i.e., the eigenvalues of $\sqrt{A^*A}$; in particular, $\sigma_1(A) = \|A\|$, the Euclidean operator norm of $A$. Consider a cocycle map $A: X\rightarrow\mathrm{M}_2(\C)$ subject to the integrability condition $\int_X\log\|A(x)\| \, d\mu(x) < \infty$ and consider the dynamical system $(T,A)$. Then for $k=1,2$, the \emph{Lyapunov exponents} of $(T,A)$ are defined by
$$
L_k(T,A)
\eqdef
\lim_{n\rightarrow\infty}\frac1n\int_X\log(\sigma_k(A_n(x))) \, d\mu(x).
$$
Moreover,
$$
L_1(T,A)
=
L(T,A)
:=
\lim_{n\rightarrow\infty}\frac1n\int_X\log\|A_n(x)\| \, d\mu(x),
$$
where $L(T,A)$ is the usual (maximal) Lyapunov exponent, as we have already defined in \eqref{eq:LE}. Then we say that $(T,A)$ has \emph{nondegenerate Lyapunov spectrum} with respect to the ergodic measure $\mu$ if
$$
L_1(T,A)
>
L_2(T,A).
$$
Evidently, if $A(x)\in\mathrm{SL}(2,\C)$ for $\mu$-a.e. $x\in X$, then $L_1(T,A)+L_2(T,A) = 0$. Hence, in this case, $(T,A)$ has nondegenerate Lyapunov spectrum if and only if it has positive Lyapunov exponent, that is, if and only if $L(T,A)>0$.

Finally, for any measurable cocycle map $A$ subject to the condition $\int_X\log\|A(x)\|d\mu<\infty$ which admits a bounded measurable conjugacy to the diagonal cocycle as in \eqref{eq:diag:DS} (i.e. there exists a bounded measurable map $Q:X\rightarrow\mathrm{GL}(2,\C)$ so that \eqref{eq:diag:DS} holds for $\mu$-a.e. $x\in X$), it always holds that
$$
L_k(T,A)
=
\int_X\log|\lambda_k(x)| \, d\mu(x),
\quad
k = 1,2,
$$
where without loss of generality we assume that
\[
 \int_X \log |\lambda_1(x)|d\mu(x) \ge \int_X \log |\lambda_2(x)|d\mu(x).
\]
In particular, if $(T,A)\in \mathcal D \mathcal S$, then it has nondegenerate Lyapunov spectrum with respect to any ergodic measure $\mu$.

\subsection{Generalized CMV matrices}
We will define a class of dynamically defined CMV matrices which are slightly more general than we need. It includes both the standard formalism for CMV matrices and the operators that correspond to the quantum walks considered in the present paper. This class of operators was first introduced in \cite{BHJ}.

Let $f,g$ be continuous functions from $X$ to $i\mathrm{SU}(2)$ (i.e., $\mathrm{U}(2)$ matrices with determinant $-1$). Note that elements of $i\mathrm{SU}(2)$ can be written in the form
\begin{equation}\label{eq:iSU2}
\begin{pmatrix}
\overline \alpha & \rho\\
\overline \rho & -\alpha
\end{pmatrix},
\end{equation}
where $\alpha\in \overline{\D}$ and $|\rho|^2+|\alpha|^2 = 1$. Notice in particular that $\rho$ may be a complex number.

Let us write the matrix elements of $f$ and $g$ as
\[
f(x)
=
\begin{pmatrix}
\overline {\alpha_f(x)} & \rho_f(x)\\
\overline {\rho_f(x)} & -\alpha_f(x)
\end{pmatrix},
\quad
\text{and}
\quad
g(x)
=
\begin{pmatrix}
\overline {\alpha_g(x)} & \rho_g(x)\\
\overline {\rho_g(x)} & -\alpha_g(x)
\end{pmatrix},
\quad
x \in X.
\]
For each $x \in X$, we will define a CMV operator $\mathcal E_x$ by $\mathcal E_x = \mathcal L_x\mathcal M_x$, where
\[
\begin{pmatrix}
(\mathcal L_x)_{2k-1,2k-1} & (\mathcal L_x)_{2k-1,2k}\\
(\mathcal L_x)_{2k,2k-1} & (\mathcal L_x)_{2k,2k}
\end{pmatrix}=f(T^kx),\ k\in\Z
\]
and
\[
\begin{pmatrix}
(\mathcal M_x)_{2k-2,2k-2} & (\mathcal M_x)_{2k-2,2k-1}\\
(\mathcal M_x)_{2k-1,2k-2} & (\mathcal M_x)_{2k-1,2k-1}
\end{pmatrix}=g(T^kx), \ k\in \Z,
\]
and all other entries of $\mathcal L$ and $\mathcal M$ are zero. Compare \cite[(2.6)--(2.9)]{GZ06}; here we essentially set $\alpha_g(x)=-\overline{\alpha_{-1}}$ and $\alpha_f(x)=-\overline{\alpha_{0}}$. We then assume the integrability condition
\begin{equation} \label{eq:logrhoL1}
-\int_X \log| \rho_f | \, d\mu
-\int_X \log| \rho_g | \, d\mu
<
\infty,
\end{equation}
which is the appropriate translation of the condition \eqref{integrability:cocycle} to the present context.

We shall use modified Gesztesy-Zinchenko (GZ) transfer matrices, normalized so the $\rho$ term disappears:
\begin{equation} \label{eq:gzmatdef}
A^z_f(x)
=
\begin{pmatrix}
-\alpha_f(x)& 1\\
1& -\overline{\alpha_f(x)}
\end{pmatrix},\quad
A^z_g(x)
=
\begin{pmatrix}
-\overline{\alpha_g(x)}& z\\
1/z& -\alpha_g(x)
\end{pmatrix}.
\end{equation}
By GZ cocycle recursion, if $x\in X_0$, then $u_x=(u_{x}(n))_{n\in\Z}$ and $v_x=(v_{x}(n))_{n\in\Z}\in \C^\Z$ solve the equations $\mathcal E_x u_x=zu_x$ and $\mathcal M_x u_x=zv_x$ (note that these two equations also imply $\mathcal E^\top_x v_x=zv_x$) if and only if:
\begin{align}
\label{GZrecursionOdd}
&\begin{pmatrix}
u_x(k)\\
v_x(k)
\end{pmatrix}=
\frac{1}{\rho_{g}(T^{\frac{k+1}2}(x))}A_g(T^{\frac{k+1}2}x)\begin{pmatrix}
u_x(k-1)\\
v_x(k-1)
\end{pmatrix}
, \text{$k$ odd.}\\
\label{GZrecursionEven}
&\begin{pmatrix}
u_x(k)\\
v_x(k)
\end{pmatrix}=
\frac{1}{\rho_f(T^{\frac k2}x)}A_f(T^{\frac k2}x)\begin{pmatrix}
u_x(k-1)\\
v_x(k-1)
\end{pmatrix}
, \text{$k$ even,}
\end{align}

Recall that $T$ is a homeomorphism on a compact metric space $X$. Assume from now on that $T$ satisfies the following additional properties:
\begin{enumerate}
\item $T$ is strictly ergodic, i.e., $T$ is minimal and uniquely ergodic, and let $\mu$ be the unique invariant measure.
\item $\supp(\mu) = X$, i.e., $\mu(Y)>0$ for each open set $Y\subset X$.
\item $T$ is an isometry, i.e. $d(Tx,Ty)=d(x,y)$ for all $x$, $y\in X$, where $d$ is the metric on $X$.
\end{enumerate}

Notice that a minimal translation on a compact metrizable group satisfies all these properties, and thus, the setting at hand applies to all CMV operators with Bohr-almost-periodic coefficients. For instance, the minimal translation on the torus $\T^d$ (corresponding to quasiperiodic potentials) or minimal translations on a compact Cantor group (corresponding to limit-periodic operators) are all of this type. We will need these assumptions for the proof of the following Theorem~\ref{t:johnson}. For instance, as we have used in previous sections, it is easy to see that minimality of $T$ and strong operator convergence imply that the spectrum of $\mathcal E_x$, denoted $\Sigma$, is independent of $x\in X$. Then:

\begin{theorem}\label{t:johnson} Let $T$ be a strictly ergodic isometry and suppose that $\supp(\mu)=X$. Then we have
\[
\Sigma
=
\{z\in\partial\D:(T,A^z_fA^z_g)\notin\mathcal D\mathcal S\}.
\]
\end{theorem}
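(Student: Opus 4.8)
The plan is to establish the two inclusions $\{z\in\partial\D:(T,A^z_fA^z_g)\notin\mathcal D\mathcal S\}\subseteq\Sigma$ and $\Sigma\subseteq\{z\in\partial\D:(T,A^z_fA^z_g)\notin\mathcal D\mathcal S\}$ separately, working throughout with the normalized Gesztesy--Zinchenko transfer matrices of \eqref{eq:gzmatdef} and the recursions \eqref{GZrecursionOdd}--\eqref{GZrecursionEven}, which identify generalized solutions of $\mathcal E_xu=zu$ and $\mathcal E_x^\top v=zv$ with orbits of the cocycle $(T,A^z_fA^z_g)$, up to the scalar factors $\prod_j\rho_f(T^jx)^{-1}\rho_g(T^jx)^{-1}$ produced by the normalization. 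The argument follows the outline of Marx \cite{marxpreprint} for singular Jacobi matrices, transported to the CMV/OPUC framework via \cite{GZ06}. Recall also, as noted above, that minimality of $T$ makes $\Sigma=\sigma(\mathcal E_x)$ independent of $x$.

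For the direction ``$\mathcal D\mathcal S\Rightarrow$ resolvent'', I would suppose $(T,A^z_fA^z_g)\in\mathcal D\mathcal S$ and use the equivalent formulation \eqref{eq:diag:DS}--\eqref{eq:unif:sep}: there is a continuous $Q\in C(X,\mathrm{GL}(2,\C))$ conjugating the cocycle to $\operatorname{diag}(\lambda_1,\lambda_2)$ with uniform $N$-step separation of $|\lambda_1|$ and $|\lambda_2|$. This gives a uniform exponential dichotomy for the GZ cocycle over every orbit, hence for each $x$ solutions $u_x^{\pm}$ of $\mathcal E_xu=zu$ decaying exponentially at $\pm\infty$; one then writes the Green's function $G_x(m,n)$ of $\mathcal E_x-z$ in terms of $u_x^{+}$, $u_x^{-}$ and their (GZ) Wronskian following \cite{GZ06}, and checks $\sup_x\|(\mathcal E_x-z)^{-1}\|<\infty$, so $z\notin\Sigma$. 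The point requiring care beyond the classical case is that decaying \emph{cocycle} orbits must be converted into decaying \emph{solutions} of $\mathcal E_xu=zu$, reintroducing the $\rho$-products; here one uses the integrability \eqref{eq:logrhoL1} together with the fact that $\{\rho_f\rho_g=0\}$ is closed with empty interior (by $\supp\mu=X$ and continuity), so that these products are subexponentially small and cannot overcome the exponential rates furnished by $\mathcal D\mathcal S$.

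For the reverse inclusion I would fix $z\in\partial\D\setminus\Sigma$; by minimality and strong-operator continuity of $x\mapsto\mathcal E_x$, the resolvent $(\mathcal E_x-z)^{-1}$ exists and is bounded uniformly in $x$, and a Combes--Thomas estimate then yields a uniform bound $|G_x(m,n)|\le Ce^{-\gamma|m-n|}$. Reading off columns and rows of the resolvent produces, for each $x$, exponentially decaying solutions $u_x^{+}$ at $+\infty$ and $u_x^{-}$ at $-\infty$; since $z$ is not an eigenvalue of any $\mathcal E_x$, the solution data of $u_x^{+}$ and $u_x^{-}$ span complementary lines, which I declare to be $E^{-}(x)$ and $E^{+}(x)$. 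Invariance $A^z(x)E^{\pm}(x)\subseteq E^{\pm}(Tx)$ is immediate from \eqref{GZrecursionOdd}--\eqref{GZrecursionEven}; the uniform $N$-step domination \eqref{eq:unif:sep} comes from the uniform decay rate $\gamma$; and continuity of $x\mapsto E^{\pm}(x)$ is extracted from the uniformity of all estimates together with strict ergodicity and $\supp\mu=X$, via an equicontinuity/normal-families argument — this is precisely where the hypotheses on $T$ (strict ergodicity, full support, isometry) enter.

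The hard part will be the \emph{singularity} of the cocycle. Unlike the classical Johnson theorem, where the transfer matrices lie in $\mathrm{SL}(2,\C)$ (or $\mathrm{GL}(2,\C)$ with determinant bounded away from zero) and $\mathcal D\mathcal S$ coincides with the well-understood exponential dichotomy, here $\det A^z_f=-\rho_f^2$ and $\det A^z_g=-\rho_g^2$ vanish on a nonempty closed set, so $A^z$ drops rank there and the backward cocycle is not globally defined; moreover the $\rho$-factors relating cocycle orbits to genuine eigenfunctions are controlled only $\mu$-a.e.\ by the ergodic theorem, not uniformly. Following Marx, the crux is to show that none of this obstructs the construction: the Weyl solutions, the Green's function, and the resulting splitting remain well behaved because $-\log|\rho_f|,-\log|\rho_g|\in L^1(\mu)$ and the degeneracy locus is topologically negligible, and because the generalized dominated splitting used here — with $\subseteq$-invariance and strict $N$-step domination in place of equality and a two-sided dichotomy — is exactly the notion that survives rank drops while still detecting the spectral gap.
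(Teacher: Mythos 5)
Your skeleton is right -- two inclusions, Weyl solutions via the Gesztesy--Zinchenko recursion, Combes--Thomas, and the $\rho$-products as the new enemy -- and you correctly locate the difficulty in the singularity of the cocycle. But both halves, as sketched, have genuine gaps. For the direction $(T,A^z_fA^z_g)\in\DS\Rightarrow z\notin\Sigma$, your plan to verify $\sup_x\|(\E_x-z)^{-1}\|<\infty$ by building the Green's function from decaying cocycle orbits runs into exactly the obstruction you half-acknowledge: the products $\prod_{j}\rho_f(T^jx)\rho_g(T^jx)$ are subexponential only for $\mu$-a.e.\ $x$ (Birkhoff), \emph{not} uniformly in $x$, since orbits can hug the zero set $\mathcal Z$ for long stretches; so decaying cocycle orbits do not yield uniformly decaying solutions, and the uniform resolvent bound does not follow. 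The paper avoids this entirely: it shows that for $x$ in the dense full-measure set $X_0$ every nontrivial solution of $\E_xu=zu$ grows exponentially at $+\infty$ or $-\infty$, so $z$ is not a generalized eigenvalue of $\E_x$; then the CMV Sch'nol--Berezanskii theorem ($\overline{\Sigma_{\mathrm g}(\E_x)}=\Sigma$) together with \emph{openness} of $\DS$ in $C(X,\mathrm M_2(\C))$ forces $z\notin\Sigma$. This pointwise-plus-density argument is the step your proposal is missing; without it (or a genuinely uniform substitute) the first inclusion is not established.

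For the direction $z\notin\Sigma\Rightarrow\DS$, your ``equicontinuity/normal-families argument'' for continuity of $x\mapsto E^\pm(x)$ and your claim that uniform $N$-step domination ``comes from the uniform decay rate $\gamma$'' are placeholders for the two hardest pieces of the actual proof. Continuity of the sections is obtained in the paper by identifying them with the Schur/anti-Schur functions $\Phi_{g,\pm}(x,z)$ of the half-line restrictions and proving joint continuity on all of $X\times(\C\setminus(\Sigma\cup\{0\}))$ (Lemma~\ref{Jointlycontinuous}); this uses operator-norm continuity of $x\mapsto\E_x$ (which is where the isometry hypothesis on $T$ enters) and a perturbation-theoretic case analysis when $z$ lies in the discrete spectrum of a half-line piece -- note that at points $x\notin X_0$ there are no two-sided Weyl solutions at all, so the sections cannot be ``read off the resolvent'' there, and one must also verify that the section avoids $\ker(A^z_fA^z_g)$ at such points (Lemma~\ref{l:m_function_invariance}). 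Likewise, Combes--Thomas controls the solutions, not the cocycle iterates, and the conversion back to \eqref{eq:unif:sep} is done in the paper not by a pointwise rate estimate but by showing $L_1>L_2$ and then invoking unique ergodicity (Furman's uniform subadditive theorem, applied to the upper semicontinuous $\log|\lambda_2|$) to upgrade the separation of the integrated exponents to a uniform finite-time domination; a uniform lower bound on the growing solution, which your route would need, is not available pointwise. So the approach is salvageable but, as written, the proof is not complete at precisely the points where the singular setting differs from the classical Johnson theorem.
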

Let us consider the set $\mathcal Z = \{x\in X: \rho_f(x)\rho_g(x)=0\}$. This set is of $\mu$-measure zero by \eqref{eq:logrhoL1}; in particular, it is a nonwhere dense subset of $X$ since we assumed that $\mu$ is positive on any open subset of $X$. Let us define
\[
X_0
=
X\setminus \left( \bigcup_{n\in\mathbb Z} T^n\mathcal Z \right).
\]
In particular $\rho_f(T^kx)\rho_g(T^kx)\neq 0$ for all $k\in\mathbb Z$ when $x\in X_0$.

\subsection{Dominated splitting away from the spectrum}\label{ssec:domawayfromspec}

In this section, we prove the relation $\Sigma \supset\{z\in\partial\D:(T,A^z_fA^z_g)\notin\mathcal D\mathcal S\}$. In fact, we prove a slightly stronger result:
\begin{equation}\label{eq:DSawayspectrum}
\C\setminus (\Sigma\cup\{0\}) \subset \{z\in\C:(T,A^z_fA^z_g) \in \mathcal D\mathcal S\}.
\end{equation}
In other words, for spectral parameters away from the spectrum of the operators $\mathcal E_x$, the dynamical system $(T,A^z_fA^z_g)$ admits a dominated splitting. We will begin by defining the Carath\'eodory and Schur functions. It will turn out that the dominating sections will (very nearly) be given by the Schur and anti-Schur functions. The key idea here is to relate Schur functions to Weyl solutions to the difference equation and then use Combes-Thomas type estimates on the solutions to get domination. The details follow.

Notice that $\E_x$ becomes a direct sum of two half line operators if $\alpha_*(T^mx)\in\partial\D$ for $*=f$ or $g$ and for some $m\in\Z$. Then for all $(x,s)\in X\times[0,2\pi)$, we define the half line unitary operators $\E^{(s)}_{x,j,+}$ on $\ell^2[j,+\infty)$ and $\E^{(s)}_{x,j,-}$ on $\ell^2(-\infty,j]$ via the following formula:
\[
\ \mathcal E_x=
\begin{cases}
\mathcal E^{(s)}_{x,2m-2,-}\oplus \mathcal E^{(s)}_{x,2m-1,+}, &\mbox{ if }\alpha_g(T^mx)=-e^{-is}\in\partial \D,\\
\mathcal E^{(s)}_{x,2m-1,-}\oplus \mathcal E^{(s)}_{x,2m,+}, &\mbox{ if }\alpha_f(T^mx)=-e^{-is}\in\partial \D.
\end{cases}
\]
Let $\mu^{(s)}_{g,x,-}$ be the spectral measures corresponding to $\mathcal E^{(s)}_{x,-2,-}$, $\mu^{(s)}_{g,x,+}$ to $\mathcal E^{(s)}_{x,-1,+}$,  $\mu^{(s)}_{f,x,-}$ to $\mathcal E^{(s)}_{x,-1,-}$, and $\mu^{(s)}_{f,x,+}$ to $\mathcal E^{(s)}_{x,0,+}$.

Notice in any case, $\E_x$ and $\mathcal E^{(s)}_{x,-j,-}\oplus \mathcal E^{(s)}_{x,-j+1,+}$ differ by a finite rank operator. Since finite rank perturbations preserve essential spectrum, we obtain for all $z\notin \Sigma$, either $z$ is in the resolvent set of $\E^{(s)}_{x,j,\pm}$, or it is in the discrete spectrum of $\E^{(s)}_{x,j,\pm}$. Let $\C\PP^1=\C\cup\{\infty\}$ denote the Riemann sphere. We may then define for $*=f$ or $g$ a pair of one-parameter families of functions $m^{(s)}_{*,\pm}:X\times(\C\setminus(\Sigma\cup\{0\})\rightarrow\C\PP^1$ as follows:
\begin{align}
m^{(s)}_{*,-}(x,z)=-\oint_{\partial\mathbb D}\frac{\zeta+z}{\zeta-z}d\mu^{(s)}_{*,x,-}(\zeta) \label{M_-}
\end{align}
and
\begin{align}
m^{(s)}_{*,+}(x,z)=\frac{\mathrm{Re}(1-\alpha_*(x))-i\mathrm{Im}(\alpha_*(x))\oint_{\partial\mathbb D}\frac{\zeta+z}{\zeta-z}d\mu^{(s)}_{*,x,+}(\zeta)}{-i\mathrm{Im}(\alpha_*(x))+\mathrm{Re}(1+\alpha_*(x))\oint_{\partial\mathbb D}\frac{\zeta+z}{\zeta-z}d\mu^{(s)}_{*,x,+}(\zeta)},\label{M_+}
\end{align}
where e.g. $m^{(s)}_{g,-}(x,z)\in \C$ if $z$ is in the resolvent set of $\E^{(s)}_{x,-2,-}$, and $m^{(s)}_{g,-}(x,z)=\infty$ if $z$ is in the discrete spectrum of $\E^{(s)}_{x,-2,-}$.

Then we define functions $\Phi^{(s)}_{*,\pm}(x,z):X\times(\C\setminus(\Sigma\cup\{0\})\rightarrow\C\PP^1$ by
\begin{equation}\label{eq.mtoM}
\Phi^{(s)}_{*,\pm}(x,z)=\frac{1-m^{(s)}_{*,\pm}(x,z)}{1+m^{(s)}_{*,\pm}(x,z)}.
\end{equation}

The functions $\Phi^{(s)}_{*, -}(x,z)$ (resp. $\Phi^{(s)}_{*, +}(x,z)$) are known as the \emph{anti-Schur} (resp.\ \emph{Schur}) functions corresponding to $\mathcal E_{x}$, and $m^{(s)}_{*, -}(x,z)$ (resp.\ $m^{(s)}_{*, +}(x,z)$) are known as the \emph{anti-Carath\'eodory} (resp.\ \emph{Carath\'eodory}) functions corresponding to $\mathcal E_{x}$. Please consult \cite[Appendix~A]{GZ06} for a thorough exposition of relevant facts on these functions. We will provide a brief summary of Schur and anti-Schur functions here for the reader's convenience.

An analytic function $\Phi_+$: $\mathbb D \to\mathbb D$ is called a \emph{Schur function}. We may extend such a function so that it takes $\mathbb C\setminus\overline{ \mathbb D}\to\mathbb C\setminus \overline{\mathbb D}$ via $\Phi_+(w)=\overline{\Phi_+(\overline w^{-1})}^{-1}$. In particular, whenever $\Phi_+$ admits a continuous continuation through some $z\in\partial \D$, it must hold that $\Phi_+(z)\in\partial\D$. We call a function $\Phi_-$ an \emph{anti-Schur} function if $1/\Phi_-$ is a Schur function.

The Schur and Caratheodory functions play an important role in the spectral theory of unitary operators, as their limiting behavior on the unit circle gives us information about the spectral measures. For instance, $\Phi^{(s)}_{g,-}(x,z)=-1$ for some $z\in\partial\D$ if and only if $z$ is a pole of $m^{(s)}_{g,-}(x,z)$, which happens if and only if the spectral measure $\mu^{(s)}_{g,x,-}$ has a pure point at $z$, i.e.\ $z$ is an eigenvalue of $\E^{(s)}_{x,-2,-}$.

Note that if $x\in X_0$ and $z\notin \Sigma\cup\{0\}$, then $\mathcal E_xu=zu$ and  $\mathcal M_x u_x = z v_x$ admit solutions $u_{x,\pm}, v_{x,\pm}$ which are square-summable at $\pm\infty$. In other words, $u_{x,\pm},v_{x,\pm} \in \ell^2(\Z_\pm)$. We define the following:
\begin{equation} \label{eq:mfct:solns-odd}
\Phi_{\pm,x,z}(k)
:=
\frac{v_{x,\pm}(k)}{u_{x,\pm}(k)}, \text{ $k$ odd,}
\end{equation}
\begin{equation} \label{eq:mfct:solns-even}
\Phi_{\pm,x,z}(k)
:=
\frac{u_{x,\pm}(k)}{zv_{x,\pm}(k)}, \text{ $k$ even.}
\end{equation}
Note however that our $u_x$ and $v_x$ come from \cite[(2.141)]{GZ06} rather than from \cite[(2.136)]{GZ06}, which is why our $\Phi$ functions are defined differently from \cite[(2.154)]{GZ06}.

Next, we define the following functions by fixing a particular choice of $k\in\Z$:
\begin{equation} \label{eq:mfct:solns}
\Phi_{g,\pm}(x,z):=\Phi_{\pm,x,z}(-2),\ \Phi_{f,\pm}(x,z):=\Phi_{\pm,x,z}(-1).
\end{equation}
Then, we have
\begin{equation} \label{eq:mfct:solns.alternative}
e^{-is}\Phi^{(s)}_{*,\pm}(x,z)=\Phi_{*,\pm}(x,z),\ *=f,g
\end{equation}

This comes from modifying the calculations in \cite{GZ06}. For simplicity, Gesztesy and Zinchenko always assume that the singular Verblunsky coefficients take on the value $\alpha_g(x)=-1$ to get $\mathcal E_x=\mathcal E^{(0)}_{x,-2,-}\oplus \mathcal E^{(0)}_{x,-1,+}$, (and similarly with $\alpha_f(x)=-1$) but if we dispense with that assumption and allow $\alpha_*(x)=-e^{-is}\in\partial\D$ instead, then this results in factors of $e^{is}$ replacing the $1$ in their (2.51) and (2.54), which leads to the $e^{is}$ factors in their (2.57) and (2.58). Notice however that the relation \eqref{eq:mfct:solns.alternative} only holds for $x\in X_0$.

We may then extend the domain of $\Phi_{*,\pm}(x,z)$ to $X\times(\C\setminus (\Sigma\cup \{0\}))$ via $\Phi_{*,\pm}(x,z) = e^{-is}\Phi^{(s)}_{*,\pm}(x,z)$ by arbitrary choice of $s\in [0,2\pi)$. By \eqref{eq:mfct:solns.alternative}, $e^{-is}\Phi^{(s)}_{*,\pm}(x,z)$ is independent of $s$ for all $(x,z)\in X_0\times(\C\setminus (\Sigma\cup \{0\}))$. Notice that $X_0$ is a dense subset of $X$. Hence, $e^{-is}\Phi_{*,\pm}^{(s)}(x,z)$ is independent of $s$ for all $(x,z)\in X\times(\C\setminus (\Sigma\cup \{0\}))$ if we can show $\Phi_{*,\pm}^{(s)}(x,z)$ is jointly continuous on $X\times(\C\setminus (\Sigma\cup \{0\}))$, which is the consequence of the following lemma.

\begin{lemma}\label{Jointlycontinuous}
The functions $\Phi_{*,\pm}^{(s)}(\cdot,\cdot): X\times (\mathbb C\setminus (\Sigma\cup \{0\}))\rightarrow\C\PP^1$, $*=f,g$, are jointly continuous.
\end{lemma}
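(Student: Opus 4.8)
The plan is to reduce the joint continuity of the Schur and anti-Schur functions to that of the Carath\'eodory-type Poisson transforms of the half-line spectral measures, and then to prove the latter by splitting the circle into a small arc around the parameter of interest and its complement. Since $w\mapsto\frac{1-w}{1+w}$ is a homeomorphism of $\C\PP^1$, and, by \eqref{M_+}, $m^{(s)}_{*,+}(x,z)$ is a M\"obius function of $\oint_{\partial\D}\frac{\zeta+z}{\zeta-z}\,d\mu^{(s)}_{*,x,+}(\zeta)$ with coefficients depending continuously on $\alpha_*(x)$, I would first reduce to proving that
\[
C^{(s)}_{*,\pm}(x,z):=\oint_{\partial\D}\frac{\zeta+z}{\zeta-z}\,d\mu^{(s)}_{*,x,\pm}(\zeta)\in\C\PP^1
\]
is jointly continuous on $X\times(\C\setminus(\Sigma\cup\{0\}))$, with the convention $C^{(s)}_{*,\pm}(x,z)=\infty$ exactly when $z\in\sigma_{\pp}(\E^{(s)}_{x,j,\pm})$. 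The elementary input here is that $x\mapsto\E^{(s)}_{x,j,\pm}$ is continuous in the strong operator topology --- its entries are $f(T^kx)$ and $g(T^kx)$, which vary continuously in $x$, and the operators are unitary, hence uniformly bounded --- so the moments $\langle e,(\E^{(s)}_{x,j,\pm})^ke\rangle$, $k\in\Z$, of $\mu^{(s)}_{*,x,\pm}$ depend continuously on $x$, and therefore $x\mapsto\mu^{(s)}_{*,x,\pm}$ is weak-$*$ continuous. For $z_0$ with $|z_0|\neq1$ and $z_0\neq0$ this already closes the argument, since the integrand is then jointly continuous in $(\zeta,z)$ for $\zeta\in\partial\D$ and $z$ near $z_0$ and $C^{(s)}_{*,\pm}$ is finite there.

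The substantive case is $z_0\in\partial\D\setminus\Sigma$. Since the pair $\E^{(s)}_{x,j,-}\oplus\E^{(s)}_{x,j+1,+}$ differs from $\E_x$ by a finite-rank operator and $\sigma_{\mathrm{ess}}(\E_x)=\sigma(\E_x)=\Sigma$ (the strictly ergodic CMV matrix $\E_x$ has no isolated spectrum, cf.\ \cite{S2}), one gets $\sigma_{\mathrm{ess}}(\E^{(s)}_{x,j,\pm})\subseteq\Sigma$, so in a fixed small closed arc $A\subset\partial\D$ around $z_0$ with $A\cap\Sigma=\emptyset$ the spectrum of $\E^{(s)}_{x,j,\pm}$ consists of finitely many simple eigenvalues. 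I would pick a small circle $\gamma\subset\C$ about $z_0$ whose closed disk avoids $\Sigma$ and whose two points on $\partial\D$ avoid $\sigma(\E^{(s)}_{x_0,j,\pm})$. The main obstacle --- and essentially the only place genuine work is needed --- is to show that $\gamma$ stays off $\sigma(\E^{(s)}_{x,j,\pm})$ for all $x$ near $x_0$, i.e.\ that isolated eigenvalues cannot spuriously appear at a prescribed point of $\partial\D\setminus\Sigma$ under perturbation of $x$; this does not follow from the merely strong convergence in $x$. My plan to overcome it is to characterize such an eigenvalue $w$ of, say, $\E^{(s)}_{x,j,+}$ as the coincidence of two projective directions in $\C^2$: the solution-data direction at site $j$ of the solution of $\E_xu=wu$ obeying the boundary condition (which is real-analytic in $(x,w)$), and the solution-data direction at site $j$ of the Weyl solution $u_{x,w,+}\in\ell^2(\Z_{\geq j})$, which exists and is unique up to scaling because $w\in\rho(\E_x)$. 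The latter direction is jointly continuous in $(x,w)$, since $u_{x,w,+}$ is proportional to the Green's-function column $\big(\langle\delta_n,(w-\E_x)^{-1}\delta_j\rangle\big)_{n\geq j}$ of the whole-line operator, and $(w-\E_x)^{-1}$ is jointly continuous: $\E_x$ is unitary with $x$-independent spectrum $\Sigma$, so $\|(w-\E_x)^{-1}\|=\dist(w,\Sigma)^{-1}$ is locally uniform and upgrades the entrywise convergence $\E_{x_n}\to\E_x$ to strong resolvent convergence. As the two directions are distinct at $(x_0,w)$ for $w\in\gamma\cap\partial\D$, they stay distinct nearby --- equivalently, the Wronskian $W_x(w)$ of the two solutions stays nonvanishing on $\gamma$ for $x$ near $x_0$.

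With $\gamma$ lying in the common resolvent set near $x_0$, I would conclude by a Riesz-projection and argument-principle computation: the eigenvalues $\lambda_1(x),\dots,\lambda_r(x)\in A$ enclosed by $\gamma$ are the zeros of $W_x(\cdot)$, which is analytic in $w$ and continuous in $x$, so $r$ is locally constant and $\lambda_k(x)$, together with the spectral weights $c_k(x)=\mu^{(s)}_{*,x,\pm}(\{\lambda_k(x)\})$, depend continuously on $x$. Writing
\[
C^{(s)}_{*,\pm}(x,z)=\sum_{k=1}^{r}c_k(x)\,\frac{\lambda_k(x)+z}{\lambda_k(x)-z}+\oint_{\partial\D\setminus A}\frac{\zeta+z}{\zeta-z}\,d\mu^{(s)}_{*,x,\pm}(\zeta),
\]
the second term is finite and jointly continuous near $(x_0,z_0)$ by weak-$*$ continuity of the measures and nonsingularity of the integrand off $A$, while each summand in the first term is jointly continuous into $\C\PP^1$: it tends to $\infty$ precisely when $\lambda_k(x_0)=z_0$ and $c_k(x_0)>0$ (numerator $\to 2z_0\neq0$, denominator $\to0$), and stays bounded and continuous otherwise. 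Hence the whole expression converges in $\C\PP^1$ to $C^{(s)}_{*,\pm}(x_0,z_0)$, which equals $\infty$ exactly on $\sigma_{\pp}(\E^{(s)}_{x_0,j,\pm})$, as the convention demands. This gives joint continuity of $C^{(s)}_{*,\pm}$, hence of $m^{(s)}_{*,\pm}$ and of $\Phi^{(s)}_{*,\pm}$. The one remaining piece of bookkeeping, at configurations $x\notin X_0$ where the M\"obius reduction for $m^{(s)}_{*,+}$ can degenerate, I would handle by a direct case analysis, using the splitting $\E_x=\E^{(s)}_{x,\cdot,-}\oplus\E^{(s)}_{x,\cdot,+}$ available at the matching value of $s$, at which the relevant half-line operator is a genuine summand of $\E_x$ and so has spectrum contained in $\Sigma$.
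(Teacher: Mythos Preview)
Your argument is essentially correct, but it takes a substantially longer route than the paper's, because you overlook one simple observation that the paper exploits: since $T$ is assumed to be an \emph{isometry}, the map $x\mapsto\E^{(s)}_{x,j,\pm}$ is continuous in \emph{operator norm}, not merely in the strong operator topology. Indeed, the matrix entries are built from $\alpha_*(T^nx)$ and $\rho_*(T^nx)$, and uniform continuity of $\alpha_*,\rho_*$ together with $d(T^nx,T^ny)=d(x,y)$ gives $\sup_n|\alpha_*(T^nx)-\alpha_*(T^ny)|\to0$ as $d(x,y)\to0$. Norm continuity immediately yields Hausdorff continuity of $x\mapsto\sigma(\E^{(s)}_{x,j,\pm})$, which is exactly the ``main obstacle'' you work hard to circumvent. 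With that in hand, the paper simply splits into the two cases ``$z_0$ in the resolvent set'' and ``$z_0$ an isolated eigenvalue'' of the half-line operator at $x_0$, handling the first by a direct estimate on the Poisson kernel and the second by standard perturbation theory of a simple isolated eigenvalue (Kato), showing $|m^{(0)}_{g,-}(x,z)|\to\infty$.

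Your detour---characterizing half-line eigenvalues as coincidences of the boundary-condition direction with the whole-line Weyl direction, using that $\sigma(\E_x)=\Sigma$ is $x$-independent to get uniform resolvent bounds and hence joint continuity of the Weyl direction, and then invoking the argument principle for the Wronskian---does work, and it has one genuine advantage: it uses only strong continuity of $x\mapsto\E_x$ and the $x$-independence of $\Sigma$, so it would survive dropping the isometry hypothesis on $T$. But within the paper's standing assumptions this machinery is unnecessary. Your final splitting of $C^{(s)}_{*,\pm}$ into a finite pole part plus a regular integral, with the spectral weights $c_k(x)$ read off via weak-$*$ continuity once the eigenvalues are isolated, is correct and is morally the same endgame as the paper's, just organized differently. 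The degeneracy of the M\"obius map in \eqref{M_+} at $|\alpha_*(x)|=1$ that you flag is a real bookkeeping point; the paper sidesteps it by proving the $-$ case (where $m^{(s)}_{*,-}$ is the Poisson transform itself) and asserting the others are analogous.
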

\begin{proof}
For the sake of simplicity, we will just consider $\Phi^{(0)}_{g,-}$. The proof works the same way for other $\Phi_{*,\pm}^{(s)}$, $*=f,g$.  We fix $z_0\in \mathbb C\setminus (\Sigma\cup\{ 0\})$ and $x_0\in X$. To simplify notation, let $\sigma_x=\sigma(\E^{(0)}_{x,-2,-})\subset\partial\D$ be the spectrum, and $\mu_x=\mu^{(0)}_{g,x,-}$ be the spectral measure.

Let us endow the space of nonempty compact subsets of $\C$ with the Hausdorff metric and endow the space of normal operators with the metric associated to the operator norm. Then, it is a standard result that the map $N\mapsto \sigma(N)$ (spectrum of $N$) is $1$-Lipschitz continuous with respect to these metrics. Since $T$ is an isometry, $\E^{(0)}_{x,-2,-}$ is a continuous function of $x$, where space of unitary operators is given the operator norm topology as above. Indeed, let us show the continuity for $\E_x$, which is clearly a stronger statement. We have for some constant $C=C(\alpha_f,\alpha_g)>0$,
 \begin{align*}
 \|\E_x-\E_y\| & \le \|\mathcal L_x\mathcal M_x-\mathcal L_y\mathcal M_x\|+\|\mathcal L_y\mathcal M_x-\mathcal L_y\mathcal M_y\|\\
 & \le \|\mathcal L_x-\mathcal L_y\|+\|\mathcal M_x-\mathcal M_y\|\\
 & \le C\cdot\sup_{n\in\Z,*\in\{f,g\}}\{|\alpha_*(T^nx)-\alpha_*(T^ny)|+|\rho_*(T^nx)-\rho_*(T^ny)|\}\\
 & \le C\cdot\sup_{*\in\{f,g\}}\{|\alpha_*(x)-\alpha_*(y)|+|\rho_*(x)-\rho_*(y)|\},
 \end{align*}
  which clearly implies $\|\E_x-\E_y\|\to 0$ as $d(x,y)\to0$. Notice the last inequality above follows from the facts: $\alpha_*$ and $\rho_*$ are uniformly continuous on $X$ by compactness of $X$; $T$ is an isometry which implies $d(T^nx,T^ny)=d(x,y)$ for all $n\in\Z$. Then we have the following two consequences: $x\mapsto \sigma_x$ is continuous with respect to the Hausdorff metric; and the map $x\mapsto \mu_x$ is continuous with respect to the weak-$*$ topology.

It is clear that by \eqref{eq.mtoM}, to prove the present lemma it suffices to show the joint continuity of $m^{(0)}_{g,-}(x,z)$ which is defined by \eqref{M_-}. Recall that $z_0$ is either in the resolvent set or in the discrete spectrum of $\E^{(0)}_{x_0,-2,-}$.

If $z_0$ is in the resolvent set of $\E^{(0)}_{x_0,-2,-}$, we may let $\eta:=\mathrm{dist}(z_0,\sigma_{x_0})>0$. Define $B(z,r)=\{z'\in\C: |z'-z|<r\}$ and $B(x,r)=\{x'\in X: d(x',x)<r\}$. Then by the continuity of the spectrum with respect to the Hausdorff metric, there exists a $\delta>0$ such that
\[
\mathrm{dist}(z,\sigma_x)>\frac\eta4, \mbox{ for all } (x,z)\in B(x_0,\delta)\times B(z_0,\frac\eta4).
\]
Hence we obtain the following estimate for all $(x,z)\in B(x_0,\delta)\times B(z_0,\frac\eta4)$:
\begin{align*}
&|m^{(0)}_{g,-}(x,z)-m^{(0)}_{g,-}(x_0,z_0)|\\
\le & |m^{(0)}_{g,-}(x,z)-m^{(0)}_{g,-}(x,z_0)|+|m^{(0)}_{g,-}(x,z_0)-m^{(0)}_{g,-}(x_0,z_0)|\\
\le &\int_{\sigma_{x}}\frac{1}{|\zeta-z|\cdot|\zeta-z_0|}d\mu_{x}\cdot |z-z_0|+\left|\int_{\sigma_{x}\cup \sigma_{x_0}}\frac{\zeta+z_0}{\zeta-z_0}(d\mu_{x}-d\mu_{x_0})\right|\\
\le & \frac{16}{\eta^2}\cdot |z-z_0|+\left|\int_{\sigma_{x}\cup \sigma_{x_0}}\frac{\zeta+z_0}{\zeta-z_0}(d\mu_{x}-d\mu_{x_0})\right|,
\end{align*}
which goes $0$ as $(x,z)$ goes to $(x_0,z_0)$. Indeed, the first term clearly converges to $0$. For the second term, it follows from these facts: $\frac{\zeta+z_0}{\zeta-z_0}$ is a bounded continuous function on $\sigma_{x}\cup \sigma_{x_0}$ for all $x$ in question, and $\mu_{x}$ converges to $\mu_{x_0}$ in the weak-$*$ topology as $x$ approaching $x_0$.

 Next, we consider the case $z_0$ is in the discrete spectrum of $\E^{(0)}_{x_0,-2,-}$. Clearly, in this case it holds that $m^{(0)}_{g,-}(x_0,z_0)=\infty$ and $\gamma:=\mathrm{dist}(z_0,\sigma_{x_0}\setminus\{z_0\})>0$. Let $p\in\Z_+$ be geometric multiplicity of $z_0$, i.e. the dimension of the eigenspace of $z_0$.

 Then by well-known facts about the continuity of a finite system of isolated eigenvalues of finite multiplicity for a norm-continuous family of bounded operators (see, e.g. Section IV.5 in \cite{K76}, or proof of \cite[Lemma 4.1]{marxpreprint}), it holds that for all $x$ sufficiently close to $x_0$, $\sigma_{x}$ may be decomposed as $\sigma'_x\cup \sigma''_x$ where $\sigma'_x$ is close in Hausdorff metric to $\sigma_{x_0}\setminus\{z_0\}$, and $\sigma''_x$ close in Hausdorff metric to $\{z_0\}$. Moreover, $\sigma''_x$ is a finite set, of which the sum of the geometric multiplicities is less than or equal to $p$. Hence, $\sigma''_x$ is a subset of the discrete spectrum of $\E^{(0)}_{x,-2,-}$ with cardinality less than or equal to $p$.

 By GZ cocycle recursion, it is easy to see that the geometric multiplicity of any eigenvalue is always $1$ in our setting, see e.g. the proof of Corollory~\ref{c:noL1solution} or \cite[Thm 10.16.9]{S2}. Hence, $p=1$ and we may assume $\sigma''_x=\{z_x\}$ for some $z_x\in\partial \D$. Notice that $\lim_{x\rightarrow x_0}z_x=z_0$ since the Hausdorff metric reduces to the usual distance on the complex plane.

Consequently, for $(x,z)$ sufficiently close to $(x_0,z_0)$, we have the following two different cases. The first case is that $z=z_x$, which implies $m^{(0)}_{g,-}(x,z)=\infty$ and we are done. We also have to consider the case where $z$ is in the resolvent set of $\E^{(0)}_{x,-2,-}$. Then as $(x,z)$ approaches $(x_0,z_0)$, it holds for some constant $C,c>0$ depending only on $(x_0,z_0)$ that
\begin{align*}
|m^{(0)}_{g,-}(x,z)|
& = \left|\int_{\sigma_{x}}\frac{\zeta+z}{\zeta-z}d\mu_{x}\right|\\
& \ge \left|\int_{\sigma''_x}\frac{\zeta+z}{\zeta-z}d\mu_x\right|-\left|\int_{\sigma'_{x}}\frac{\zeta+z}{\zeta-z}d\mu_{x}\right|\\
& \ge \left|\int_{\{z_x\}}\frac{\zeta+z}{\zeta-z}d\mu_x\right|-C\cdot\mathrm{dist}(z_0,\sigma_{x_0}\setminus\{z_0\})^{-1}\\
& \ge \frac c{|z_x-z|}-C\cdot\gamma^{-1},
\end{align*}
which goes to $\infty$ since both $z_x$ and $z$ converge to $z_0$, concluding the proof.
\end{proof}

We define the projection
\[
\phi:\mathbb {C}^2\setminus\{0\}\to \mathbb {CP}^1,\ \phi\binom{v_1}{v_2}=\frac{v_1}{v_2}
\]
where $\phi\binom{1}{0}=\infty$. For each $A=\left(\begin{smallmatrix}a&b\\ c&d\end{smallmatrix}\right)\in \mathrm{M}_2(\C)$, let $A\cdot:\C\PP^1\setminus \mathrm{ker} (A)\rightarrow\C\PP^1$ denote the induced map $\phi\circ A\circ \phi^{-1}$. A direct computation shows that
\begin{equation}\label{FLT}
A\cdot z=\frac{az+b}{cz+d}.
\end{equation}

Since subordinate solutions are unique up to a constant multiplier, we have
\begin{equation}\label{transport-}
\left(A^z_f(x)A^z_g(x)\right)\cdot z\Phi_{g,\pm}(x,z)
=
z\Phi_{g,\pm}(Tx,z)
\end{equation}
for all $x\in X_0$ and all $z \in \C \setminus (\Sigma\cup\{0\})$. The following lemma extends this invariance of $z\Phi_{g,-}(x,z)$ to arbitrary $x \in X$.

\begin{lemma}\label{l:m_function_invariance}
For all $z\in \mathbb C\setminus (\Sigma\cup\{0\})$ and for all $x\in X$, it holds that
\begin{equation}\label{eq:notinkernel}
(z\Phi_{g,-}(x,z),1)^\top \notin \mathrm{ker}(A^z_f(x)A^z_g(x)),
\end{equation}
where $(\infty,1)^\top$ is understood as $(1,0)^\top$. Hence, for all $z\in \mathbb C\setminus (\Sigma\cup\{0\})$ and for all $x\in X$, it holds that
\begin{equation}\label{transport}
\left(A^z_f(x)A^z_g(x)\right)\cdot z\Phi_{g,-}(x,z)=z\Phi_{g,-}(Tx,z).
\end{equation}
\end{lemma}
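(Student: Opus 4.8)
\emph{Strategy.} The relation \eqref{transport-} already gives \eqref{transport} on the dense subset $X_0$, so the plan is to extend it to all of $X$ by a continuity argument; the one genuinely nontrivial ingredient this requires is the non-degeneracy \eqref{eq:notinkernel}, which I would prove first.

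\emph{The continuity extension.} Granting \eqref{eq:notinkernel}: the map $x\mapsto A^z_f(x)A^z_g(x)\in\mathrm{M}_2(\C)$ is continuous on $X$, since its entries are continuous functions of the continuous functions $\alpha_f,\alpha_g$; and $x\mapsto z\Phi_{g,-}(x,z)\in\C\PP^1$ is continuous by Lemma~\ref{Jointlycontinuous} together with the $s$-independence of $e^{-is}\Phi^{(s)}_{*,\pm}$. The induced M\"obius action $(A,\xi)\mapsto A\cdot\xi$ is continuous on the open set $\{(A,\xi):\xi\notin\phi(\mathrm{ker}\,A)\}$, and \eqref{eq:notinkernel} places $(A^z_f(x)A^z_g(x),\,z\Phi_{g,-}(x,z))$ in this set for every $x\in X$; hence the left-hand side of \eqref{transport} is a continuous $\C\PP^1$-valued function of $x$, and so is the right-hand side $z\Phi_{g,-}(Tx,z)$ since $T$ is a homeomorphism. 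Two continuous maps into the Hausdorff space $\C\PP^1$ which agree on the dense set $X_0$ (by \eqref{transport-}) agree on $X$, which yields \eqref{transport}. Thus only \eqref{eq:notinkernel} remains.

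\emph{Proof of \eqref{eq:notinkernel}.} A direct computation gives $\det A^z_g(x)=|\alpha_g(x)|^2-1$ and $\det A^z_f(x)=|\alpha_f(x)|^2-1$, so the only obstructions occur on the (nowhere dense) sets where $\alpha_g(x)\in\partial\D$ or $\alpha_f(x)\in\partial\D$; at such points $\mathrm{ker}\,A^z_g(x)=\C\cdot(z\alpha_g(x),1)^\top$ and $\mathrm{ker}\,A^z_f(x)=\C\cdot(1,\alpha_f(x))^\top$ respectively. The governing principle is that the forbidden projective direction at a decoupling bond corresponds to a pole of a half-line (anti-)Carath\'eodory function, i.e.\ an eigenvalue of a half-line restriction of $\mathcal E_x$, and every such eigenvalue lies in $\Sigma$. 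Concretely: if $\alpha_g(x)=-e^{-is}\in\partial\D$, then $(z\Phi_{g,-}(x,z),1)^\top\in\mathrm{ker}\,A^z_g(x)$ would force $\Phi_{g,-}(x,z)=\alpha_g(x)$, i.e.\ $\Phi^{(s)}_{g,-}(x,z)=-1$ (using $\Phi_{g,-}(x,z)=e^{-is}\Phi^{(s)}_{g,-}(x,z)$); by \eqref{eq.mtoM} this means $m^{(s)}_{g,-}(x,z)=\infty$, i.e.\ $z\in\sigma_{\pp}(\mathcal E^{(s)}_{x,-2,-})\subseteq\Sigma$ --- impossible. This proves $(z\Phi_{g,-}(x,z),1)^\top\notin\mathrm{ker}\,A^z_g(x)$ for all $x\in X$, and in particular makes $A^z_g(x)\cdot z\Phi_{g,-}(x,z)$ a well-defined continuous function of $x$; comparing on $X_0$ (where $\rho_g\neq0$) with the odd GZ recursion \eqref{GZrecursionOdd} at the bond $k=-1$, read projectively via \eqref{eq:mfct:solns-odd}--\eqref{eq:mfct:solns}, and then extending by continuity, one obtains the one-step identity $A^z_g(x)\cdot z\Phi_{g,-}(x,z)=1/\Phi_{f,-}(x,z)$ on all of $X$. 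Now if $\alpha_g(x)\notin\partial\D$, then $\mathrm{ker}(A^z_f(x)A^z_g(x))=A^z_g(x)^{-1}\mathrm{ker}\,A^z_f(x)$, so \eqref{eq:notinkernel} is equivalent to $A^z_g(x)\cdot z\Phi_{g,-}(x,z)\neq 1/\alpha_f(x)$, i.e.\ $\Phi_{f,-}(x,z)\neq\alpha_f(x)$; but as before $\Phi_{f,-}(x,z)=\alpha_f(x)=-e^{-it}$ would give $\Phi^{(t)}_{f,-}(x,z)=-1$, hence $z\in\sigma_{\pp}(\mathcal E^{(t)}_{x,-1,-})\subseteq\Sigma$. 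Finally, if $\alpha_g(x)\in\partial\D$, then $\mathrm{ker}\,A^z_g(x)\subseteq\mathrm{ker}(A^z_f(x)A^z_g(x))$, with equality unless $\Ran\,A^z_g(x)=\mathrm{ker}\,A^z_f(x)$, which forces $z=-\overline{\alpha_f(x)}\alpha_g(x)$ --- and this is precisely the eigenvalue of the $1\times1$ block that $\mathcal E_x$ splits off at site $-1$ when $\rho_f(x)=\rho_g(x)=0$, hence it lies in $\Sigma$. So for $z\notin\Sigma$ we always have $\mathrm{ker}(A^z_f(x)A^z_g(x))=\mathrm{ker}\,A^z_g(x)$ in this case, and \eqref{eq:notinkernel} again reduces to the already-established $(z\Phi_{g,-}(x,z),1)^\top\notin\mathrm{ker}\,A^z_g(x)$.

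\emph{Main obstacle.} The continuity-and-density step is routine once \eqref{eq:notinkernel} is available; the real work is the case analysis in \eqref{eq:notinkernel} at the degenerate points, and within it two points are delicate: recognizing that the only spectral parameter at which the \emph{product} $A^z_f(x)A^z_g(x)$ loses more rank than each factor separately is the split-off eigenvalue $z=-\overline{\alpha_f(x)}\alpha_g(x)\in\Sigma$, and establishing the one-step identity $A^z_g(x)\cdot z\Phi_{g,-}(x,z)=1/\Phi_{f,-}(x,z)$ on all of $X$ --- its validity off $X_0$ cannot be read directly from the GZ recursion (which is stated only on $X_0$) and must instead be obtained through the $\Phi^{(s)}_{f,-}$-extension of $\Phi_{f,-}$ and the joint continuity of Lemma~\ref{Jointlycontinuous}.
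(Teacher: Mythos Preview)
Your proposal is correct and follows essentially the same approach as the paper's proof: establish the kernel avoidance \eqref{eq:notinkernel} by a case analysis on which of $\alpha_f(x),\alpha_g(x)$ hit $\partial\D$, interpret the forbidden direction as $\Phi^{(s)}_{*,-}(x,z)=-1$ (equivalently a pole of the half-line Carath\'eodory function, hence an eigenvalue of a direct summand of $\mathcal E_x$, hence in $\Sigma$), rule out the degenerate ``zero matrix'' situation as the split-off eigenvalue $z=-\overline{\alpha_f(x)}\alpha_g(x)\in\Sigma$, and then deduce \eqref{transport} from \eqref{transport-} by density of $X_0$ and continuity. The only organizational difference is that you package the one-step relation $A^z_g(x)\cdot z\Phi_{g,-}(x,z)=1/\Phi_{f,-}(x,z)$ as a continuity extension from $X_0$ to all of $X$, whereas the paper invokes the GZ recursion at $k=-1$ directly (which is legitimate there since only $\rho_g(x)\neq 0$ is needed at that step); your formulation is arguably tidier but not substantively different.
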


\begin{proof}
We first demonstrate that when $z\in \mathbb C\setminus (\Sigma\cup\{0\})$, $A^z_f(x)A^z_g(x)$ cannot be the zero matrix. We calculate
\[
A^z_f(x)A^z_g(x)
=
\begin{pmatrix}
\alpha_f(x)\overline{\alpha_g(x)}+z^{-1}
& -\alpha_g(x)-z\alpha_f(x)\\
-\overline{\alpha_f(x)}z^{-1}-\overline{\alpha_g(x)} & z+\overline{\alpha_f(x)}\alpha_g(x)
\end{pmatrix}.
\]
Suppose to the contrary that $A^z_f(x)A^z_g(x)$ is the zero matrix; then, it is straightforward to calculate that $|\alpha_f(x)\alpha_g(x)|=1$ and $z=-\overline{\alpha_f(x)}\alpha_g(x)$. But in this case, the matrix $\mathcal E_x-z\mathrm{I}$ has a row and a column that consists entirely of zeroes, and so $z$ must lie in the spectrum of $\mathcal E_x$, contrary to our assumption $z\in\mathbb C\setminus (\Sigma\cup \{0\})$.

It is also easy to check that the kernel of $A^z_f(x)A^z_g(x)$ is trivial if and only if $\alpha_f(x),\alpha_g(x)$ are both in the interior of the unit disk.

Let us first assume $\lvert\alpha_g(x)\rvert=1$. Regardless whether $\lvert \alpha_f(x)\rvert<1 $ or  $\lvert \alpha_f(x)\rvert=1$, we can then calculate that \[\ker(A^z_f(x)A^z_g(x)) =\binom{\alpha_g(z)+z\alpha_f(x)}{\alpha_f(x)\overline{\alpha_g(x)}+z^{-1}}=\binom{z}{\overline{\alpha_g(x)}} =\ker(A^z_g(x)).\]

We first show that $(z\Phi_{-}(x,z),1)^\top \notin \mathrm{ker} (A_g^z(x))$. Since $A_f A_g$ is not the zero matrix, we have
\[
\mathrm{ker}
(A_f^z(x)A_g^z(x))
=
\mathrm{ker}(A_g^z(x))=
\mathrm{span}\begin{pmatrix}
z\alpha_g(x)\\
1
\end{pmatrix}
\]
by a straightforward dimension-counting argument.
Let $\alpha_g=-e^{-is}$. Notice that by \eqref{eq:mfct:solns.alternative} $\Phi_{g,-}(x,z)=-\alpha_g(x)\Phi^{(s)}_{g,-}(x,z)$. Hence
\begin{equation}\label{transversal:to:kernel}
\binom{z\Phi_{g,-}(x,z)}{1}=\begin{pmatrix}
z\alpha_g(x)\\
1
\end{pmatrix}
\iff
\Phi^{(s)}_{g,-}(x,z)=-1.
\end{equation}
By \eqref{eq.mtoM}, $\Phi^{(s)}_{g,-}(x,z)=-1$ implies $m^{(s)}_{g,-}(x,z)=\infty$, which in turn implies that $d\mu^{(s)}_{x,-}$ has a pure point at $z$. Since $\mathcal E_x=\mathcal E^{(s)}_{x,-2,-}\oplus \mathcal E^{(s)}_{x,-1,+}$, this implies that $z$ is an eigenvalue of $\mathcal E_x$, contrary to our assumption that $z \notin \Sigma$.

The calculations proceed in a similar way for the case $\lvert \alpha_f(x)\rvert=1$, $\lvert\alpha_g(x)\rvert<1$. In this case, we have that $A^z_f(x)$ is singular but $A_g^z(x)$ is not. Thus the kernel of $A_f^z(x)A_g^z(x)$ is
\begin{equation}(A_g^z(x))^{-1}\mathrm{ker}(A_f^z(x))=
\mathrm{span}\left\{A_g^z(x)^{-1}\begin{pmatrix}
1\\
\alpha_f(x)
\end{pmatrix}\right\}.\label{fkernel}
\end{equation}

But note that in this case we also have
\[z\Phi_{g,-}(x,z)=(A_g^z(x))^{-1}\cdot \frac{1}{\Phi_{f,-}(x,z)}
\]
by \eqref{GZrecursionOdd}, \eqref{eq:mfct:solns-odd}--\eqref{eq:mfct:solns}.
 Hence $(z\Phi_{g,-}(x),1)\in \ker (A_f^z(x)A_g^z(x))$ is equivalent to
\begin{equation}
\binom{1}{\Phi_{f,-}(x,z)}=\binom{1}{\alpha_f(x)}
\end{equation}

Let $\alpha_f(x)=-e^{-is}$. Notice that by \eqref{eq:mfct:solns.alternative} $\Phi_{f,-}(x,z)=-\alpha_f(x)\Phi^{(s)}_{f,-}(x,z)$. Thus $z\Phi_{g,-}(x)\in \ker A_f^z(x)A_g^z(x)$ is in fact only possible when $\Phi^{(s)}_{f,-}(x,z)=-1$, which again contradicts $z\in \mathbb C\setminus \Sigma$.

Hence we obtain \eqref{eq:notinkernel}, which clearly implies that $(A_f^z(x)A_g^z(x))\cdot z\Phi_{g,-}(x,z)$ defines a continuous map from $X$ to $\C\mathbb P^1$. Notice that $z\Phi_{g,-}(\cdot,z)\in C(X,\C\PP^1)$. Thus \eqref{transport} follows from the fact that $X_0$ is dense in $X$.

\end{proof}

Next we show that $|z\Phi_-(x,z)-z\Phi_+(x,z)|$ is uniformly bounded away from zero.

\begin{lemma}\label{l:unif:trans}
For all $z\in \mathbb C\setminus (\Sigma\cup\{0\})$, there exists a constant $c=c(z)>0$ such that for all $x\in X$,
\begin{equation}\label{eq:unif:trans1}
|z\Phi_-(x,z)-z\Phi_+(x,z)|>c.
\end{equation}
\end{lemma}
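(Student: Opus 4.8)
The plan is to view the map $x \mapsto \big(z\Phi_-(x,z),\, z\Phi_+(x,z)\big)$ as a continuous map from the compact space $X$ into $\C\PP^1\times\C\PP^1$ that avoids the diagonal, and then to convert the resulting spherical gap into the Euclidean estimate \eqref{eq:unif:trans1}. Throughout, $\Phi_\pm=\Phi_{g,\pm}$ as in \eqref{eq:mfct:solns}.

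First I would establish the pointwise statement: $z\Phi_-(x,z)\neq z\Phi_+(x,z)$ for every $x\in X$. For $x\in X_0$ this is essentially the absence of $\ell^2(\Z)$ eigenfunctions. By \eqref{eq:mfct:solns-even} and \eqref{eq:mfct:solns} we have $z\Phi_{g,\pm}(x,z)=\phi\big((u_{x,\pm}(-2),v_{x,\pm}(-2))^\top\big)$, where $(u_{x,\pm},v_{x,\pm})$ are the GZ solution pairs square-summable at $\pm\infty$. If the two projective points agreed, then $(u_{x,-}(-2),v_{x,-}(-2))^\top$ and $(u_{x,+}(-2),v_{x,+}(-2))^\top$ would be parallel, and since every GZ transfer matrix in \eqref{GZrecursionOdd}--\eqref{GZrecursionEven} is invertible when $x\in X_0$, the solutions $u_{x,-}$ and $u_{x,+}$ would be scalar multiples of one another. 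The common sequence would then lie in $\ell^2(\Z)$ and solve $\mathcal E_x u = zu$, so $z\in\sigma(\mathcal E_x)=\Sigma$, contradicting $z\notin\Sigma\cup\{0\}$. For $x\notin X_0$ the cocycle degenerates, so I would instead argue as in the subcases $|\alpha_g(x)|=1$ and $|\alpha_f(x)|=1$ of the proof of Lemma~\ref{l:m_function_invariance}: coincidence of $z\Phi_-(x,z)$ and $z\Phi_+(x,z)$ forces one of the Schur functions $\Phi^{(s)}_{*,\pm}(x,z)$ to take the value $-1$, whence the corresponding half-line spectral measure has an atom at $z$ and $z$ is an eigenvalue of $\mathcal E_x$ --- again impossible.

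With pointwise non-coincidence in hand, uniformity is a compactness argument. By Lemma~\ref{Jointlycontinuous}, the relation $\Phi_{*,\pm}(x,z)=e^{-is}\Phi^{(s)}_{*,\pm}(x,z)$ from \eqref{eq:mfct:solns.alternative}, and continuity of $w\mapsto zw$ on $\C\PP^1$, the function
\[
G(x)\;:=\;d\big(z\Phi_-(x,z),\, z\Phi_+(x,z)\big),
\qquad
d(w_1,w_2)=\frac{2\,|w_1-w_2|}{\sqrt{(1+|w_1|^2)(1+|w_2|^2)}}
\]
(the chordal metric on $\C\PP^1$, with $d(w,\infty)=2(1+|w|^2)^{-1/2}$) is continuous on $X$. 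It is strictly positive by the previous step, and $X$ is compact, so $\delta:=\min_{x\in X}G(x)>0$. Finally, since the denominator is $\ge 1$ one has $d(w_1,w_2)\le 2|w_1-w_2|$ always, with the convention that a difference involving $\infty$ is $+\infty$; hence $|z\Phi_-(x,z)-z\Phi_+(x,z)|\ge \tfrac12 G(x)\ge \delta/2$ for all $x\in X$, and $c(z)=\delta/2$ works.

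The hard part is the pointwise step at the degenerate points $x\notin X_0$: there the ``parallel initial data propagate to a global $\ell^2$ solution'' mechanism is unavailable because the GZ cocycle is singular, and one must fall back on the spectral-measure/Schur-function characterization of eigenvalues, keeping careful track of which half-line piece of the decoupled operator $\mathcal E_x$ carries the putative atom --- the same bookkeeping already carried out in Lemma~\ref{l:m_function_invariance}.
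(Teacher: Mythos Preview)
Your overall strategy --- establish $z\Phi_-(x,z)\neq z\Phi_+(x,z)$ pointwise, then extract a uniform gap via compactness and the chordal metric --- is sound, and the argument on $X_0$ is correct. The gap is the degenerate case $x\notin X_0$. You assert that coincidence of $z\Phi_-$ and $z\Phi_+$ there ``forces one of the Schur functions $\Phi^{(s)}_{*,\pm}(x,z)$ to take the value $-1$'' by ``the same bookkeeping already carried out in Lemma~\ref{l:m_function_invariance}.'' That lemma, however, addresses a different question: it shows that the direction $(z\Phi_{g,-}(x,z),1)^\top$ avoids the \emph{kernel} of $A_f^z(x)A_g^z(x)$, and the kernel direction happens to correspond to $\Phi^{(s)}_{g,-}=-1$. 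Nothing in that argument relates the kernel direction to $z\Phi_{g,+}(x,z)$, so coincidence $\Phi_-=\Phi_+$ does not place $\Phi_-$ in the kernel and does not force any $\Phi^{(s)}_{*,\pm}$ to be $-1$. Moreover, even if you could conclude $\Phi^{(s)}_{g,+}(x,z)=-1$, this only says $m^{(s)}_{g,+}(x,z)=\infty$ via \eqref{eq.mtoM}; by the explicit form of \eqref{M_+} this does \emph{not} force the Cauchy transform $\oint\frac{\zeta+z}{\zeta-z}\,d\mu^{(s)}_{g,x,+}$ to blow up, so you cannot conclude that $\mu^{(s)}_{g,x,+}$ has an atom at $z$.

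The paper avoids this difficulty entirely by taking a quantitative route: from the Green's function identity
\[
\big\langle\delta_{-2},(\E_x-z)^{-1}\delta_{-2}\big\rangle
=\frac{\Phi_{g,+}(x,z)}{z\big(\Phi_{g,-}(x,z)-\Phi_{g,+}(x,z)\big)}
\]
valid for $x\in X_0$, one reads off $|z\Phi_{g,-}-z\Phi_{g,+}|\ge |\Phi_{g,+}(x,z)|\cdot\mathrm{dist}(z,\Sigma)$. For $z\in\partial\D\setminus\Sigma$ the factor $|\Phi_{g,+}|$ equals $1$, so the bound is uniform and passes to all $x\in X$ by the continuity of Lemma~\ref{Jointlycontinuous}; for $z\notin\partial\D$ the Schur/anti-Schur dichotomy already separates $\Phi_+$ from $\Phi_-$. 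If you want to salvage your compactness approach, the cleanest fix is exactly this: use the resolvent bound on $X_0$ to get a uniform (not merely pointwise) inequality there, and then pass to the closure --- at which point the compactness step becomes unnecessary.
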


\begin{proof}
Let $z\in\mathbb C\setminus (\Sigma\cup\{0\})$ and $x\in X_0$. Let us first note some important differences in convention between \cite{GZ06} and our paper. Firstly, their $\Phi_\pm(z,-2,-2)$ differs with our $\Phi_\pm(x,z)$ by a factor of $z$: refer to their (2.154) and compare our \eqref{eq:mfct:solns-odd}, \eqref{eq:mfct:solns-even}. Furthermore, our $u,v$ are constant multiples of theirs, so the only difference is that factor of $z$. Clearly, by the definition of Schur and anti-Schur functions, it suffices to show \eqref{eq:unif:trans1} for $z \in \partial \D \setminus \Sigma$.

We start with \cite[(3.7)]{GZ06}. We first unpack the definitions from \cite[(2.59), (2.60), (3.5)]{GZ06} to express this formula in terms of (anti-) Carath\'eodory functions in \cite{GZ06}, and then use \cite[(2.151)]{GZ06} to express it in terms of their (anti-) Schur functions. We then replace their (anti-) Schur functions with ours, keeping in mind the differences in convention noted in the previous paragraph. We then obtain
\begin{align*}&\left<
\delta_{-2},(\mathcal E_x-z)^{-1}\delta_{-2}\right>\\
=&\left(1+\frac{1+z\Phi_{g,-}(x,z)}{1-z\Phi_{g,-}(x,z)}\right)\left(1-\frac{1+z\Phi_{g,+}(x,z)}{1-z\Phi_{g,+}(x,z)}\right)\frac{1}{2z\left(\frac{1+z\Phi_{g,+}(x,z)}{1-z\Phi_{g,+}(x,z)}-\frac{1+z\Phi_{g,-}(x,z)}{1-z\Phi_{g,-}(x,z)}\right)}\\
=&\frac{z\Phi_{g,+}(x,z)}{z(z\Phi_{g,-}(x,z)-z\Phi_{g,+}(x,z))}.
\end{align*}
Thus we obtain for all $(x,z)\in X\times \mathbb C\setminus (\Sigma\cup\{0\})$,
\begin{equation}\label{eq:unif:trans2}
\lvert z\Phi_{g,+}(x,z)-z\Phi_{g,-}(x,z)\rvert\geq |\Phi_{g,+}(x,z)|\cdot\mathrm{dist}(z,\Sigma).
\end{equation}
In particular, for $z\in\partial \D\setminus\Sigma$, $|\Phi_{g,+}(x,z)|=1$ which together with \eqref{eq:unif:trans2} implies
$$
\lvert z\Phi_{g,+}(x,z)-z\Phi_{g,-}(x,z)\rvert\geq \mathrm{dist}(z,\Sigma)>0,
$$
concluding the proof.
\end{proof}
Now we have all the information that we need to prove \eqref{eq:DSawayspectrum}.
\begin{proof}[Proof of \eqref{eq:DSawayspectrum}]
Let $A^z(x) = A^z_f(x)A^z_g(x)$ and $M^z(x)=\frac{1}{\rho_f(x)\rho_g(x)}A^z(x)$, the original $2$-step GZ cocycle. Notice that this $M^z$ coincides with $M(;z)$ in previous sections. Let $L(T,z)=L(T,M^z)$. Since $M^z(x)\in \mathrm{SL}(2,\C)$ for all $x\in X_0$, $(T,M^z)$ has non-degenerate Lyapunov spectrum if and only if $L(T,z)>0$. By the relation between $A^z$ and $M^z$, $(T,A^z)$ has nondegenerate Lyapunov spectrum if and only if $(T,M^z)$ has nondegenerate Lyapunov spectrum, hence, if and only if $L(T,z)>0$. Thus, using a CMV version of the Combes--Thomas estimate (e.g., \cite[Theorem~10.14.2]{S2}) 
$(T,A^z)$ has non-degenerate Lyapunov spectrum for all $z\in\C\setminus(\Sigma\cup\{0\})$. Throughout this proof, we will fix such a $z$.

We let $E^+(.,z)$ be the continuous lift of $z\Phi_{g,-}(.,z)$ and let $E^-(.,z)$ be a continuous lift of $z \Phi_{g,+}(.,z)$ to subspaces of $\mathbb C^2$. By Lemma~\ref{Jointlycontinuous}, formula \eqref{transport-}, Lemma~\ref{l:m_function_invariance} and \ref{l:unif:trans}, we obtain all $x\in X$, it holds that
\begin{align}
\label{eq:inv:E-}&A^z(x)E^-(x,z)\subseteq E^-(Tx,z),\\
\label{eq:inv:E+}&A^z(x)E^+(x,z)= E^+(Tx,z).
\end{align}

Then we define $Q^z(x)$ to be:
$$
\begin{pmatrix}z & z\Phi_{g,+}(x,z)\\ \frac{1}{\Phi_{g,-}(x,z)} &1\end{pmatrix},\ z\in\D;\mbox{ or }
\begin{pmatrix}z\Phi_{g,-}(x,z) & z\\ 1& \frac{1}{\Phi_{g,+}(x,z)}\end{pmatrix},\ z\notin\D.
$$

Then since $\Phi_{g,+}$ is a Schur function and $\Phi_{g,-}$ is an anti-Schur function, we have
$$
Q^z\in C(X,\mathrm{M}_2(\C)).
$$
Next we show $Q^z(x)$ is nonsingular for all $z\in \C\setminus(\Sigma\cup\{0\})$ and all $x\in X$. Indeed, if $z\in\D$, then Lemma~\ref{l:unif:trans} implies
$$
|\det(Q^z(x))|=\frac1{|\Phi_{g,-}(x,z)|}|z\Phi_{g,-}(x,z)-z\Phi_{g,+}(x,z)|\neq 0
$$
for all $x\in X$ whenever $\Phi_{g,-}(x,z)\neq\infty$. But $\Phi_{g,-}(x,z)=\infty$ implies $\det(Q^z(x))=|z|\neq 0$ since we assumed $z\neq 0$. If $z\notin\D$, then by \eqref{eq:unif:trans2},
$$
|\det(Q^z(x))|=\frac{1}{|\Phi_{g,+}(x,z)|}|z\Phi_{g,-}(x,z)-z\Phi_{g,+}(x,z)|>\mathrm{dist}(z,\Sigma)>0.
$$
In any case, we obtain
\begin{equation}\label{eq:GL:conjugacy}
Q^z\in C(X,\mathrm{GL}(2,\C)).
\end{equation}

Now by \eqref{eq:inv:E-} and \eqref{eq:inv:E+}, we must have for some $\lambda_i(x,z)$, $i=1,2$,
\begin{equation}\label{eq:conjugacy}
Q^z(Tx)^{-1}A^z(x)Q^z(x)=\begin{pmatrix}\lambda_1(x,z)& 0\\ 0 & \lambda_2(x,z)\end{pmatrix}.
\end{equation}
Since $Q^z\in C(X,\mathrm{GL}(2,\C))$ and $A^z\in C(X,\mathrm M_2(\C))$, it holds that
$$
\lambda_k(\cdot,z) \in C(X,\C),\ k=1,2.
$$
Furthermore, for each $x\in X_0$, we have
$$
\binom{z\Phi_{g,-}(x,z)}{1}\in \mathrm{span}\binom{u_{x,-}(-2)}{v_{x,-}(-2)},\ \binom{z\Phi_{g,+}(x,z)}{1}\in \mathrm{span}\binom{u_{x,+}(-2)}{v_{x,+}(-2)},
$$
where $\binom{u_{x,\pm}}{v_{x,\pm}}\in\left(\ell^2(\Z_\pm)\right)^2$. Thus, by the discussion of last part of Section~\ref{sec:dominated splitting}, it must hold that for $k=1,2$,
$$
L_k(T,A^z)
=
\int_X\log|\lambda_k(x,z)| \, d\mu(x).
$$
In particular, $\int_X\log|\lambda_1(x,z)| \, d\mu(x) > \int_X\log|\lambda_2(x,z)| \, d\mu(x)$ since $(T,A^z)$ admits nondegenerate Lyapunov spectrum for $z\in\C\setminus(\Sigma\cup\{0\})$. Notice that by definition of $\lambda_1(x)$, it holds that
$$
A^z(x)\binom{z}{\frac{1}{\Phi_{g,-}(x,z)}}=\lambda_1(x,z)\binom{z}{\frac{1}{\Phi_{g,-}(Tx,z)}},\ z\in \D,
$$
and
$$
A^z(x)\binom{z\Phi_{g,-}(x,z)}{1}=\lambda_1(x,z)\binom{z\Phi_{g,-}(Tx,z)}{1}, z\notin\D.
$$
By \eqref{eq:notinkernel} and compactness of $X$, we must have
$$
\inf_{x\in X}|\lambda_1(x,z)|>c>0,
$$
which implies $\log|\lambda_1(\cdot,z)|\in C(X,\R)$. Now unique ergodicity of $T$ implies that
$$
\frac1n\sum^{n-1}_{i=0}\log|\lambda_1(T^ix)| \mbox{ converges to } L_1(T,A^z) \mbox{ uniformly in } x\in X.
$$
Hence for each $\gamma>0$, there exists a $N_1$ such that for all $n\ge N_1$ and for all $x\in X$
\begin{equation}\label{eq:up}
\frac1n\sum^{n-1}_{i=0}\log|\lambda_1(T^ix)|>L_1(T,A^z)-\gamma.
\end{equation}
On the other hand since $\lambda_1\in C(X,\C)$, unique ergodicity of $T$ implies that for each $\gamma>0$, there exists a $N_2>0$ such that for all $n\ge N_2$, it holds uniformly for all $x\in X$:
\begin{equation}\label{eq:down}
\frac1n\sum^{n-1}_{i=0}\log|\lambda_2(T^ix)|<L_2(T,A^z)+\gamma,
\end{equation}
see e.g. \cite{Furman}\footnote{In \cite{Furman}, the author assumed continuity of the subadditive sequence of functions. However, the proof works without any change for upper semi-continuous functions. In particular, it works for our $\log|\lambda_2(x)|$.}. By choosing $\gamma<\frac12(L_1(T,A^z)-L_2(T,A^z))$ and set $N=\max\{N_1,N_2\}$, combining \eqref{eq:up} and \eqref{eq:down}, we then obtain uniformly for all $x\in X$,
$$
\prod^{N-1}_{i=0}|\lambda_1(T^ix)|>\prod^{N-1}_{i=0}|\lambda_2(T^ix)|,
$$
concluding the proof that $(T,A^z)\in\mathcal D\mathcal S$.
\end{proof}

\subsection{Absence of dominating splitting on the Spectrum} \label{ssec:absenceofDSonspec}

In this section, we prove
\[
\Sigma
\subseteq
\{ z \in \partial \D : (T,A^z) \notin \mathcal{DS}\}.
\]
Since $\E_x$ is a unitary operator, we have $\Sigma\subset\partial\D$. We then instead show a equivalent result:
\[
\C\setminus\Sigma
\supset
\{ z \in \C\setminus\{0\} : (T,A^z) \in \mathcal{DS}\}.
\]

Suppose that $(T,A^z) \in \mathcal{DS}$. Then there exists a $Q^z\in C(X,\mathrm{GL}(2,\C))$ such that
$$
Q^z(Tx)^{-1}A^z(x)Q^z(x)
=
\begin{pmatrix}
\lambda_1(x,z)& 0\\
0 & \lambda_2(x,z)
\end{pmatrix}.
$$
Hence, we obtain
$$
Q^z(Tx)^{-1}M^z(x)Q^z(x)
=
\begin{pmatrix}
\Lambda_1(x,z)& 0\\
0 & \Lambda_2(x,z)
\end{pmatrix}.
$$
where $\Lambda_k(x)=\frac{\lambda_k(x)}{\rho_f(x)\rho_g(x)}$, $k=1,2$. Let $L_k(T,z)$, $k=1,2$, be the two Lyapunov exponents of $(T,M^z)$. In particular, $L_1(T,z)=L(T,z)$. Thus, by the discussion of the last part of Section~\ref{sec:dominated splitting}, it holds that $L_k(T,z)=\int_X\log|\Lambda_k(x,z)| \, d\mu(x)$ and
$$
L(T,z)
=
\int_X\log|\Lambda_1(x,z)| \, d\mu(x)
=
-\int_X\log|\Lambda_2(x,z)| \, d\mu(x)
>
0.
$$
Let $\vec m^z_k(x)$ be the column vectors of $Q^z(x)$, $k=1,2$. Since $Q^z(x)\in\mathrm{GL}(2,\C)$, $\vec m^z_1(x)$ and $\vec m^z_2(x)$ are linearly independent. Moreover, for each $x\in X_0$
$$
\lim_{n\rightarrow\infty} \frac1n \log\|M^z_n(x)\vec m^z_1(x)\|
=
L_1(T,z)
=
\lim_{n\rightarrow\infty} \frac1n \log\|M^z_{-n}(x)\vec m^z_2(x)\|
$$
Clearly for each $x \in X$ and for each $\vec w\in\C^2$, we may decompose $\vec w$ as
$$
\vec w=c_1\vec m^z_1(x)+c_2\vec m^z_2(x).
$$
Hence for each $x\in X_0$ and all $\vec w \in\C^2$, $\|M^z_n(x)\vec w\|$ grows exponentially either as $n\rightarrow\infty$ or as $n\rightarrow -\infty$. Thus, by the GZ recursions in \eqref{GZrecursionOdd}, \eqref{GZrecursionEven}, and the fact that $\mathcal M_xu=zv$, we see that $z$ cannot be a generalized eigenvalue of $\E_{x}$ for any $x\in X_0$; recall that a generalized eigenvalue of $\E_x$ is a spectral parameter $z\in \C$ at which $\E_x u=z u$ enjoys a nontrivial polynomially bounded solution in $\C^{\Z}$. Let $\Sigma_{\mathrm g}(\E_x)$ denote the set of generalized eigenvalues of $\E_x$. From the CMV version \cite[Theorem~6]{DFLY2} of a theorem of Sch'nol-Berezanskii \cite{Bere, Sch}, it holds that
$$
\overline{\Sigma_{\mathrm g}(\E_x)}=\Sigma.
$$
Since $\mathcal{DS}$ defines an open subset of $C(X,\mathrm{M}_2(\C))$, it follows that $z\notin\Sigma$. This concludes the proof of Theorem~\ref{t:johnson}.

\end{document}